\begin{document}
\title[MetResMod]
{Boundary behaviour of Weil--Petersson and fiber metrics for Riemann moduli spaces}


\author{Richard Melrose}
\address{Department of Mathematics, Massachusetts Institute of Technology}
\email{rbm@math.mit.edu}
\author{Xuwen Zhu}
\address{Department of Mathematics, Stanford University}
\email{xuwenzhu@stanford.edu}

\begin{abstract} The Weil--Petersson and Takhtajan--Zograf metrics on the
  Riemann moduli spaces of complex structures for an $n$-fold punctured
  oriented surface of genus $g,$ in the stable range $g+2n>2,$ are shown
  here to have complete asymptotic expansions in terms of Fenchel--Nielsen
  coordinates at the exceptional divisors of the Knudsen--Deligne--Mumford
  compactification. This is accomplished by finding a full expansion for
  the hyperbolic metrics on the fibers of the universal curve as they
  approach the complete metrics on the nodal curves above the exceptional
  divisors and then using a push-forward theorem for conormal
  densities. This refines a two-term expansion due to Obitsu--Wolpert for
  the conformal factor relative to the model plumbing metric which in turn
  refined the bound obtained by Masur. A similar expansion for the Ricci
  metric is also obtained.
\end{abstract}

\maketitle
\tableofcontents

\section*{Introduction}\label{Intro}

The universal curve, $\Cs{g}=\Ms{g,1},$ of Riemann surfaces of genus $g\geq
2$ may be identified with the moduli space of pointed curves as a stack or
as a smooth orbifold fibration $\psi:\Cs{g}\longrightarrow \Ms g$ over the
moduli space (we distinguish notationally between these spaces since later
they have different real resolutions). Deligne and Mumford~\cite{MR0262240} gave
compactifications $\opsi:\oCs{g}=\oMs{g,1}\longrightarrow \oMs{g}$ in which
nodal curves are added covering exceptional divisors corresponding to the
pinching of geodesics on the Riemann surfaces to pairs of nodal points
resulting in a surface, or surfaces, of lower genus but with the same
arithmetic genus. The holomorphic map $\opsi$ then has Lefschetz
singularities and is universal in this sense. Each fiber of $\oMs{g,1}$
carries a unique metric of finite area and curvature $-1,$
complete outside the nodal points.

A resolution of the complex compactification is given below
\begin{equation}
\xymatrix{
\hCs{g}\ar[r]^{\beta}\ar[d]_{\hpsi}&\oMs{g,1}\ar[d]^{\opsi}\\
\hMs g \ar[r]_{\beta}&\oMs{g}
}
\label{MetResMod.2}\end{equation}
in the category of real manifolds with corners (or more correctly tied
orbifolds), which resolves this fiber metric. In particular, the real
fibration in \eqref{MetResMod.2} is a b-fibration in terms of which the
fiber metric is conformal, with a log-smooth conformal factor, to a smooth
metric on a rescaling of the fiber tangent bundle. The resolution
\eqref{MetResMod.2} (in both domain and range) involves a transcendental
step, introducing variables comparable to the length of the shrinking
cycles, i.e.\ Fenchel-Nielsen coordinates.

The regularity properties of the fiber hyperbolic metrics have been widely
studied, see \cite{wolpert1985, MR1037410, MR2039996, MR3077878,
  wolpert2015, MR2067477}, and the results effectively applied, see
\cite{Gell-Redman-Swoboda, Ji-Mazzeo-Muller-Vasy, MR2993753}. The log-smoothness of
the hyperbolic fiber metric up to the boundaries, produced by the real
blow-up, corresponds to a refinement, to infinite order, of the 2-term
expansion obtained by Obitsu and Wolpert \cite{MR2399166} for the fiber
metric relative to the `plumbing metric' on the local model for nodal
curves. The case of a single shrinking geodesic was considered in
\cite{MetLef} and the log-smoothness of the constant curvature fiber
metrics here is proved by an extension of the method used there (which in
structure goes back to Obitsu and Wolpert loc.\ cit.)  We further extend
these results to the case of the universal curve over the moduli space,
$\Ms{g,n},$ of marked Riemann surfaces in the stable case that $2g+n>2.$
The universal curve over $\Ms{g,n}$ may be identified as
$\cC_{g,n}=\Ms{g,n+1}$ but in which one, here by convention the last,
variable is distinguished as the fiber variable in the holomorphic
fibration $\cC_{g,n}\longrightarrow \Ms{g,n}.$

In the unpointed case, let $\cL$ be the fiber tangent bundle of $\psi,$
then the cotangent bundle of $\Ms{g}$ is naturally identified with the
bundle of holomorphic quadratic differentials, i.e.\ holomorphic sections
of $\cL^{-2},$ on the fibers of $\cC_{g}=\Ms{g,1}$
\begin{equation}
q:\Lambda^{1,0}\Ms g\simeq Q\Ms g.
\label{MetResMod.300}\end{equation}
Using this identification, the Weil--Petersson (co-)metric is defined by
\begin{equation}
G_{\WP}(\zeta_1,\zeta_2)=\int_{\fib}\frac{\zeta_1\overline{\zeta_2}}{\mu_H},\
\zeta_1,\ \zeta_2\in Q_m,\ m\in \Ms g
\label{MetResMod.299}\end{equation}
where $\mu_H$ is the area form of the fiber hyperbolic metric and the
integrand itself may be identified as a fiber 2-form.

More generally in the pointed case, the Knudsen-Deligne-Mumford
compactification $\oMs{g,n}$ of the $n$-pointed moduli space may again be
considered as a smooth complex orbifold. The `boundary'
$\oMs{g,n}\setminus\Ms{g,n}$ is a union of normally intersecting, and
self-intersecting, divisors. Expanding on an idea of Robbin and Salamon
\cite{Robbin-Salamon} we extend \eqref{MetResMod.300} to the
compactification, showing that the logarithmic cotangent bundle
$\DL^{1,0}\oMs{g,n},$ with local sections the sheaf of differentials which are
logarithmic across the exceptional divisors, is naturally isomorphic to a
corresponding holomorphic extension of the bundle of holomorphic quadratic
differentials on the fibers of $\oCs{g,n}.$ More precisely,
the projection
\begin{equation}
\opsi:\oCs{g,n}=\oMs{g,n+1}\longrightarrow \oMs{g,n}
\label{MetResMod.306}\end{equation}
is a Lefschetz map (as defined explicitly below) and hence has a
well-defined, and surjective, differential between the logarithmic tangent
bundles. The null bundle, $\oL,$ is thus a holomorphic line bundle over
$\oCs{g,n}$ (with sections being the holomorphic vector fields on the fibers
which vanish at marked points and nodes -- in the marked case nodes also
arise from the collision between, or more precisely the separation of, the
fixed divisors corresponding to the marked points). Then $Q\Ms {g,n}$ extends
as a holomorphic vector bundle $\oQMgn$ where the fiber of $\oQ$ consists
of the holomorphic sections of $\oL^{-2}$ which vanish at marked points
and have consistent values at nodes. That is, if the fibers of $\opsi$ are
`resolved' into a disjoint union of marked Riemann surfaces, by separating
the nodes, then elements of the fiber of $\oQ$ may be interpreted as
meromorphic quadratic differentials in the ordinary sense, with at most
simple poles at marked points and double poles at nodes but where the
double residues at the two points representing a node are the same. Notice that this is meaningful since the double residue, which is the leading coefficient of $f(z)\frac{dz^{2}}{z^{2}}$, is a well-defined complex number. Then $q$
in \eqref{MetResMod.300} extends to a global holomorphic isomorphism
\begin{equation}
\oq:\DL^{1,0}\oMs{g,n}\simeq \oQMgn.
\label{MetResMod.307}\end{equation}
This allows \eqref{MetResMod.299} to be evaluated asymptotically near the
divisors, allowing the full description of the singularities of the
Weil--Petersson co-log-metric, i.e.\ on $\DL^{1,0}\oMs{g,n}.$

One of the properties of the log-cotangent bundle is that
$\DL^{1,0}\oMs{g,n}$ has the cotangent bundle of the divisor (or the
log-cotangent bundle in the case of intersecting divisors) as a subbundle
over the divisor. These `tangential elements' are identified by $\oq$ with the
quadratic differentials with at most simple poles at the corresponding
(separated) nodal points. Thus the quadratic differentials corresponding to
the log-normal directions are the most singular and these produce the
singularities in $G_{\WP}$ as a co-log-metric. Moreover, the restriction to
the log-cotangent bundle of the divisor gives the Weil--Petersson metric
for the finite covering of the divisor as a product of pointed moduli
spaces; this is already noted by Masur \cite{masur1976extension}.

In the pointed case we again have a `metric resolution' to real manifolds
with corners extending \eqref{MetResMod.2} and again involving the
introduction of logarithmic coordinates
\begin{equation}
\xymatrix{
\hCs{g,n}\ar[d]_{\hpsi}\ar[r]^-{\beta}&\oCs{g,n}=\oMs{g,n+1}\ar[d]^{\opsi}\\
\hMs{g,n}\ar[r]^{\beta}&\oMs{g,n}.
}
\label{MetResMod.339}\end{equation}
As remarked above $\hCs{g,n}= \hMs{g,n+1}$ involves an extra step of
resolution compared to $\hMs{g,n}\longrightarrow \oMs{g,n},$ without which
the map $\hpsi$ is not defined. This extra blow-up is of the
codimension-two variety of double points of $\opsi$ which therefore lifts
to a collection of boundary hypersurfaces. As a result the boundary
hypersurfaces of $\hCs{g,n}$ fall into three distinct classes, the `fixed'
hypersurfaces corresponding to the marked points, the `type I' boundary
hypersurfaces corresponding to the resolved singular fibers and the `type
II' boundary hypersurfaces corresponding to the singular set of $\opsi,$ so
the image in the base of the type II hypersurfaces lies within that of the
type I hypersurfaces.

The line bundle $\oL$ lifts to a complex line bundle, $\hL,$ on $\hCs{g,n}$ but
this is not precisely the fiber b-tangent bundle 
\begin{equation}
{}^{\hpsi}T\hCs{g,n}\subset \bT\hCs{g,n}
\label{MetResMod.340}\end{equation}
of $\hpsi,$ although the latter is a well-defined smooth bundle. Namely
$\oL$ is a rescaling of the fiber tangent bundle of $\hpsi$ at the type II
boundary hypersurfaces. Its smooth sections are precisely those of the form
$\rho_{\II}^{-1}V$ where $V$ is a smooth tangential vector field on
$\hCs{g,n}$ which is tangent to the circle fibration of the type II
hypersurface and such that $\hpsi_*(V)=0$ and $\rho_{\II}$ is a collective
defining function for the hypersurfaces of type $\II.$ We think of this as a `cusp
structure'. Extending the special case in \cite{MetLef}:

\begin{theorem}\label{MetResMod.342} The complete metrics of constant
  curvature $-1$ on the fibers of $\Cs{g,n}$ over $\Ms{g,n}$ extend to be
  conformal to a smooth family of fiber Hermitian metrics on the line bundle
  $\hL$ over the fibers of $\hCs{g,n}\longrightarrow \hMs{g,n}$ with a
  positive definite conformal factor which is log-smooth.
\end{theorem}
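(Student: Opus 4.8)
The plan is to reduce the statement to a uniform solvability-and-regularity result for the Liouville (constant-curvature) equation over the resolved family, and then to propagate regularity using the b-calculus on $\hCs{g,n}$.

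First I would fix, once and for all, a smooth Hermitian metric $h_0$ on the line bundle $\hL$ over $\hCs{g,n}$. Because $\hL$ is the fiber b-tangent bundle rescaled by $\rho_{\II}^{-1}$ at the type II hypersurfaces, such an $h_0$ is the same datum as a smooth family of conformal fiber metrics $g_0$ on the fibers of $\hpsi$; the rescaling is exactly what is needed so that the collar/cusp model of shape $\frac{|dz|^2}{(|z|\log|z|)^2}$ appearing near a pinching geodesic becomes smooth and nondegenerate in the resolved coordinates, realizing the `cusp structure' mentioned above. I would choose $h_0$ so that $g_0$ is, near each type I (resp. type II) hypersurface, a grafting of the complete finite-area hyperbolic metric on the nodal limit curve (with its cusps at the separated nodes) and the hyperbolic collar model, interpolated by a fiber cutoff. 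The first thing to check is that the Gauss curvature $K_0$ of $g_0$ is log-smooth on $\hCs{g,n}$ and that $K_0+1$ vanishes at the type I and type II hypersurfaces, indeed to any prescribed order after correcting the graft; this is the analogue of the model construction in \cite{MetLef} and in Obitsu--Wolpert.

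Second, writing the desired hyperbolic metric as $g=e^{2u}g_0$, the conformal factor $u$ satisfies the fiberwise Liouville equation $\Delta_{g_0}u=K_0+e^{2u}$, equivalently $(-\Delta_{g_0}+2)u=-(K_0+1)-(e^{2u}-1-2u)$, a fiberwise semilinear elliptic equation whose linearization at $u=0$ is $L=-\Delta_{g_0}+2$. The crucial structural fact is that $L$ is uniformly invertible across the whole degenerating family: since $g_0$ is complete and asymptotically hyperbolic, $\Delta_{g_0}\le 0$ and the essential spectrum of $-\Delta_{g_0}$ on each cusp begins at $\tfrac14$, so $L\ge 2>0$ with a bound independent of the moduli point. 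Using this together with the smallness of the source $K_0+1$, I would obtain existence, uniqueness and a uniform $L^\infty$ (indeed weighted-conormal) bound for $u$ by a barrier/sub- and super-solution argument, or equivalently a contraction mapping on the fibers, recovering the complete curvature $-1$ metric together with a first, non-sharp bound on the conformal factor.

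Third, and this is where the real work lies, I would upgrade the uniform bound on $u$ to log-smoothness on $\hCs{g,n}$. The operator $L$ is an elliptic b-type (fiber-cusp) differential operator for the resolved structure, so I can analyze it with the b-calculus: one computes its indicial roots at the type I and type II hypersurfaces and at their corners, constructs a parametrix whose inverse maps the polyhomogeneous source $-(K_0+1)$ to a polyhomogeneous solution with index set determined by those roots, and then runs a Picard-type iteration, each nonlinear correction $e^{2u}-1-2u$ lying in a strictly smaller weighted space, to conclude that the exact solution $u$ is itself polyhomogeneous with integer powers and at most logarithmic terms. The logarithms, and the reason the transcendental Fenchel--Nielsen length variable must enter the resolution, appear precisely through coincidences of indicial roots forcing log terms, and the top-order behaviour reproduces the two-term Obitsu--Wolpert expansion. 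Finally, that the fiberwise conformal factor depends log-smoothly on the base parameters, the `smooth family' assertion, follows because the whole construction is carried out on the total space $\hCs{g,n}$ with $\hpsi$ a b-fibration, so the conormal push-forward theorem quoted in the abstract applies. The main obstacle I anticipate is exactly this third step: establishing the uniform mapping and polyhomogeneity properties of $L^{-1}$ simultaneously at all three classes of boundary hypersurfaces and, especially, at the corners where type I and type II hypersurfaces meet. Identifying the indicial roots there, controlling the resonances that generate logarithmic terms, and checking that the nonlinearity never destroys membership in the target index set, is the technical heart; the spectral-gap fact making $L$ invertible is comparatively soft, and the grafting construction and push-forward are by now standard once the resolution $\hCs{g,n}$ is in hand.
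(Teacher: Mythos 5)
Your steps (1) and (2) coincide with the paper's own: the grafted family is Proposition~\ref{MetResMod.383} (following Obitsu--Wolpert), and the uniform invertibility of $\Lap+2$ is Proposition~\ref{MetResMod.129} (where non-negativity of the fiber Laplacian already gives the gap; the $1/4$ lower bound on cusp essential spectrum is not needed). The genuine gap is in your third step. The operator $L=-\Delta_{g_0}+2$ differentiates only along the fibers of $\hpsi$: it is \emph{not} an elliptic b-differential operator on the total space $\hCs{g,n}$ (its principal symbol vanishes on base covectors), and the fibers on which it acts themselves degenerate adiabatically at the type II faces, where the necks are cylinders whose circle factor has length $O(s)$. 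So there is no off-the-shelf b-calculus parametrix on $\hCs{g,n}$ whose polyhomogeneous mapping properties you can invoke; constructing a uniform inverse with log-smooth output \emph{jointly} in fiber and base variables is exactly the hard content, and it cannot be outsourced to single-manifold b-machinery. The paper does this by hand: it solves two model problems --- the cusp-surface operator $\Lap_{\I}+2$ (Lemma~\ref{MetResMod.429}, indicial roots $1,-2$) and the reduced neck ODE $\Lap_{\II}+2$ (Lemma~\ref{MetResMod.430}, indicial roots $0,3$) --- and then builds a formal log-power-series solution by induction over the codimension of boundary faces lying over the base (Propositions~\ref{MetResMod.98} and~\ref{MetResMod.94}), with explicit bookkeeping of the at-most-linear growth of logarithmic degrees; the series is summed by Borel's lemma, and only at the end are the uniform estimate of Proposition~\ref{MetResMod.129} and uniqueness of the hyperbolic metric used to show the actual solution differs from the formal one by a term vanishing to infinite order (proof of Theorem~\ref{MetResMod.285}). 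Your indicial-root and resonance picture is the right heuristic, but as stated it names the difficulty rather than resolving it.

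A second, related error: your final appeal to the push-forward theorem of \cite{MR93i:58148} to obtain log-smooth dependence on the base parameters is a non sequitur. That theorem concerns fiber integrals of log-smooth densities under a b-fibration --- it is what Section~\ref{WP} uses for $G_{\WP}$ --- and says nothing about parameter regularity of solutions of a fiberwise PDE. In the paper, regularity in the base directions is not a separate step at all: the formal expansion is constructed from the outset at \emph{all} boundary hypersurfaces of $\hCs{g,n}$, including those fibering over the divisors in $\hMs{g,n}$, so joint log-smoothness is built into the ansatz, and the final implicit-function-theorem argument only needs to absorb an error flat at every face. If you replace your parametrix step by such an inductive formal-series construction (or by an honestly developed uniform adiabatic/surgery calculus, itself a substantial project), the remainder of your outline goes through and reproduces the paper's proof.
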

\noindent A log-smooth function on a manifold with corners is smooth in the
interior and log-smooth at the boundary with an expansion with integer
powers (as for the Taylor series of a smooth function) with logarithmic
factors with powers growing at most linearly. A more detailed description
of the asymptotic expansion is given in~\S\ref{Linearized} and~\S\ref{Conformal}.

It then follows that the Weil--Petersson metric is also log-smooth on
the resolved manifold $\hMs{g,n},$ which corresponds to the introduction of
logarithmic coordinates around the divisors. The passage to logarithmic
variables means that the circle bundles corresponding to the fibrations of
the boundary hypersurfaces of $\hMs{g,n},$ over the divisors, extend off the
boundary to infinite order. In \S\ref{WP} the push-forward theorem from
\cite{MR93i:58148} is applied to \eqref{MetResMod.299} to yield the second
major result of this paper (see also the results of Mazzeo and Swoboda
\cite{Mazzeo-Swoboda}). A holomorphic local defining function for an
exceptional divisor 
\begin{equation}
z_j=\exp(-s_j^{-1}+i\theta_j)
\label{MetResMod.415}\end{equation}
induces a defining function $s_j$ for the corresponding boundary
hypersurface of $\hMs{g,n}$ and also locally trivializes the normal circle bundle.

\begin{theorem}\label{MetResMod.375} On the resolved space $\hMs{g,n}$ the
  Weil--Petersson metric is $\theta$-invariant to infinite order at each of
  the boundary hypersurfaces and near any point in a corner of codimension
  $k,$ takes the form
\begin{equation}
g_{\WP}=\pi\sum\limits_{j=1}^k
s_j\left(\frac{ds_j^2}{s^2_j}+s_j^2d\theta_j^2\right)+g'_{\WP},\
g'_{\WP}(\pa_{\theta_j},\cdot)=O(s_j^4),
\label{MetResMod.301}\end{equation}
in terms of \eqref{MetResMod.415} for local holomorphic defining functions
for the divisors. Here $g'_{\WP}$ is a log-smooth hermitian tensor which
restricts to the corner to be the lift of the Weil--Petersson metric on the
$k$-fold intersection of divisors.
\end{theorem}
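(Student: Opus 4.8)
The plan is to extract $g_{\WP}$ from its defining co-metric \eqref{MetResMod.299} by transporting the integrand to the resolved total space $\hCs{g,n},$ pushing forward to $\hMs{g,n},$ and then dualizing. First I would fix a frame adapted to a codimension-$k$ corner using the isomorphism $\oq$ of \eqref{MetResMod.307}. At such a corner $k$ geodesics pinch, producing $k$ nodes, and the log-cotangent bundle $\DL^{1,0}\oMs{g,n}$ splits into the $k$ log-normal covectors $dz_j/z_j$ and a complementary space of covectors tangent to the $k$-fold divisor intersection. Under $\oq$ the normal covectors $dz_j/z_j$ correspond to quadratic differentials $\phi_j$ with a double pole (with matching double residues) at the $j$-th node, while the tangential covectors correspond to quadratic differentials regular at all nodes, at worst simple poles at the marked points, which are exactly the quadratic differentials of the lower-genus pieces of the nodal curve. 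From \eqref{MetResMod.415} one has $dz_j/z_j=s_j^{-2}\,ds_j+i\,d\theta_j,$ so once the matrix of $G_{\WP}$ in this frame is known, reading off the real coefficients of $ds_j$ and $d\theta_j$ is immediate.

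The second step is to make the integrand in \eqref{MetResMod.299} conormal on the fibers of $\hpsi.$ By Theorem~\ref{MetResMod.342}, $\mu_H$ is a log-smooth positive multiple of the area form of a smooth fiber Hermitian metric on $\hL,$ and under $\oq$ each $\phi_j$ lifts to a smooth section of $\hL^{-2}$ after the type~II rescaling by $\rho_{\II};$ hence each $\phi_i\overline{\phi_j}/\mu_H$ pulls back to $\hCs{g,n}$ as a polyhomogeneous fiber density. Applying the push-forward theorem of \cite{MR93i:58148} to the b-fibration $\hpsi$ then shows that every entry $G_{\WP}(\phi_i,\phi_j)$ is polyhomogeneous on $\hMs{g,n},$ with index set controlled by the collar geometry. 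This is also where the $\theta$-invariance enters: all dependence on $\theta_j$ is carried by the holomorphic coordinate $z_j=\exp(-s_j^{-1}+i\theta_j)$ and its conjugate, and since $|z_j|=\exp(-s_j^{-1})$ the $m$-th Fourier mode in $\theta_j$ with $m\neq0$ is weighted by $\exp(-|m|/s_j)=O(s_j^\infty),$ so every entry, and hence $g_{\WP},$ is $\theta$-invariant to infinite order at each boundary hypersurface.

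Next I would compute the leading behaviour from the degenerating collars. The self-pairing $G_{\WP}(\phi_j,\phi_j)=\int_{\fib}|\phi_j|^2/\mu_H$ is dominated by the collar of the $j$-th pinching geodesic, where the fiber metric is to infinite order the rotationally symmetric model collar metric and $\phi_j\sim dw^2/w^2;$ the resulting collar integral has the Masur--Wolpert asymptotic $\sim(\pi s_j^{3})^{-1},$ while the tangential block restricts at $s_j=0$ to the co-metric defining the Weil--Petersson metric of the divisor intersection and the normal--tangential cross terms are of lower order. Inverting this block co-metric then produces $g_{\WP}:$ the large diagonal normal block $\sim(\pi s_j^3)^{-1}$ inverts to $\pi s_j^3\,|dz_j/z_j|^2=\pi s_j^3(s_j^{-4}ds_j^2+d\theta_j^2)=\pi s_j(ds_j^2/s_j^2+s_j^2 d\theta_j^2),$ giving the model block of \eqref{MetResMod.301}, while the tangential block inverts to the Weil--Petersson metric on the $k$-fold intersection, identifying the boundary restriction of $g'_{\WP}.$

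Finally, the off-diagonal estimate $g'_{\WP}(\pa_{\theta_j},\cdot)=O(s_j^4)$ follows from the next order in the collar expansion together with the near-decoupling of the normal and tangential blocks: after subtracting the explicit model, the residual coupling of the twist direction $\pa_{\theta_j}$ gains an extra power of $s_j$ beyond the $s_j^3$ carried by the model term. I expect the principal obstacle to be the explicit collar computation fixing both the constant $\pi$ and the precise power $s_j,$ since this requires the exact normalization of $\oq$ on the double-pole differentials together with the Wolpert-type relation between the plumbing coordinate $z_j$ and the geodesic length implicit in \eqref{MetResMod.415}, and it requires enough control of the cross terms and of the sub-leading collar contributions to guarantee that, after the block inversion, the remainder $g'_{\WP}$ is genuinely log-smooth and restricts correctly to the divisor. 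The log-smoothness of the fiber metric supplied by Theorem~\ref{MetResMod.342} is what makes the push-forward legitimate, so the analytic difficulty is concentrated in this bookkeeping rather than in the regularity itself.
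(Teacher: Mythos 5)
Your proposal is correct and follows essentially the same route as the paper's proof: transport the co-metric integrand to the metric resolution using $\oq$ and the log-smoothness of the fiber metric from Theorem~\ref{MetResMod.342}, apply the push-forward theorem of \cite{MR93i:58148} to the b-fibration $\hpsi$ in the adapted frame of double-pole versus tangential quadratic differentials, extract the leading diagonal entry $\sim(\pi s_j^3)^{-1}$ from the model collar, and invert the block co-metric. The paper carries out exactly the bookkeeping you flag as the remaining obstacle — the explicit collar integral $\frac{1}{\pi s^3}\left(1-s+\frac{\sin 2\pi s}{2\pi}\right)$ in \eqref{MetResMod.242}--\eqref{MetResMod.246}, justified by the quadratic agreement of the hyperbolic and grafted metrics, and the change from non-holomorphic to holomorphic defining functions via Proposition~\ref{BunDef}.
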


The leading order part of the metric in~\eqref{MetResMod.301} is the same
as the model metric given by Yamada~\cite{MR2067477}. Note that the local
fiber differentials $d\theta_j$ may be replaced by connections forms, in
fact it is natural to take (extensions of) the connections forms, $\alpha _j,$
fixed by holomorphy and the hermitian structures on the normal bundles. Then
\begin{equation}
g_{\WP}=\pi\sum\limits_{j=1}^k
s_j\left(\frac{ds_j^2}{s^2_j}+s_j^2\alpha _j^2\right)+g''_{\WP}
\label{MetResMod.418}\end{equation}
where $g''_{\WP}$ has the same restriction properties as $g'_{\WP}.$ It is
also permissible to replace the $s_j$ by $h_j=s_j+O(s_j^2)$ where the $h_j$
are the length functions arising from the hermitian structures on the
normal bundles, without changing the conclusion regarding the remainder
term (in fact the proof of \eqref{MetResMod.301} passes through this change
of variable).

Given the regularity results below for the lengths of the short closed
geodesics, the same form occurs in Fenchel--Nielsen coordinates, i.e.\ with
the $s_j$ interpreted as the normalized lengths of the nearby shrinking geodesics for
the non-fixed divisors. The tangential metrics come from the covering of
the components of the $k$-fold intersection of divisors by a product of
pointed moduli spaces. For $\Ms g$ such an asymptotic expansion has been
deduced, from~\cite{MetLef}, using related methods by Mazzeo and Swoboda
\cite{Mazzeo-Swoboda}.

As a corollary, in~\S\ref{SG} we derive the formula for the length of the shrinking closed geodesic 
near the $j$-th non-fixed divisor in terms of the logarithmic coordinates
$s_j=1/\log(|z_j|^{-1})$ for which we use the abbreviation $\ilog|z_j|.$ 
\begin{corollary}
The length of the shrinking geodesic is a log-smooth function of $s_{j}$, and has the form
\begin{equation}
L_{j}(s_{j})=2\pi^{2}s_{j}(1+s_{j}e(s_{j}))
\end{equation}
where $e$ is a log-smooth function up to the boundaries.
\end{corollary}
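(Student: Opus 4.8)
The plan is to read the geodesic length off the fiberwise description of the hyperbolic metric in Theorem~\ref{MetResMod.342}, integrating the conformal factor around the central circle of the collar surrounding the node. Work near a single non-fixed divisor with holomorphic defining function $z_j$ as in \eqref{MetResMod.415}. In the corresponding fiber the node sits in a plumbing collar, an annulus of conformal modulus $\tfrac{1}{2\pi}\log|z_j|^{-1}=\tfrac{1}{2\pi}s_j^{-1}$; write $w$ for the fiber plumbing coordinate, $\zeta=\log w$ for the cylindrical coordinate on the collar, and $\varphi=\operatorname{Im}\zeta$ for the fiber angle. By Theorem~\ref{MetResMod.342} the fiber hyperbolic metric is conformal to the smooth fiber metric on $\hL$ with a positive log-smooth conformal factor; in the collar this reads $g_H=e^{2\psi}|d\zeta|^2$ with $\psi=\psi_{\mathrm{mod}}+\phi$, where $\psi_{\mathrm{mod}}$ is the explicit hyperbolic-cylinder model term and $\phi$ is log-smooth and vanishes on the type~$\II$ boundary $s_j=0$.

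First I would locate the geodesic. The model collar metric is rotationally symmetric in $\varphi$, and, by the construction in Theorem~\ref{MetResMod.342}, the resolved fiber metric respects this symmetry to infinite order at the type~$\II$ hypersurface (cf.\ the infinite-order $\theta$-invariance in Theorem~\ref{MetResMod.375}). For an exactly rotationally symmetric metric the waist circle, characterised as the critical circle of the circumference function, is a closed geodesic; hence the shrinking geodesic $\gamma_j$ agrees with the waist circle up to a displacement of size $O(s_j^{\infty})$, which contributes nothing to a log-smooth expansion in $s_j$. It therefore suffices to compute the length of the waist circle, and since around it $d\zeta=i\,d\varphi$ we get
\begin{equation}
L_j=\oint e^{\psi}\,|d\zeta| = 2\pi\,e^{\psi(\mathrm{waist})} = 2\pi\,e^{\psi_{\mathrm{mod}}(\mathrm{waist})}\,e^{\phi(\mathrm{waist})}.
\label{eq:SGlength}
\end{equation}

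Next I would fix the leading coefficient and assemble the expansion. The model term is normalised so that $e^{\psi_{\mathrm{mod}}(\mathrm{waist})}=\pi s_j\bigl(1+O(s_j)\bigr)$ with smooth remainder; the coefficient $\pi$ is the classical Masur--Wolpert constant, which may be checked independently by matching the modulus $\tfrac{1}{2\pi}s_j^{-1}$ of the plumbing collar to the modulus $\pi/L_j$ of a hyperbolic cylinder of core length $L_j$, already giving $L_j=2\pi^2 s_j\bigl(1+o(1)\bigr)$. Here the $\rho_{\II}^{-1}$ rescaling relating $\hL$ to the fiber tangent bundle is precisely what turns this cylinder model into a smooth (cusp-type) metric on $\hL$. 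Since the model metric is smooth and $\phi$ is log-smooth, the right-hand side of \eqref{eq:SGlength} is a log-smooth function of $s_j$, giving the asserted regularity. Because $\phi$ vanishes on $s_j=0$, its restriction to the waist has the form $\phi(\mathrm{waist})=s_j\tilde e(s_j)$ with $\tilde e$ log-smooth, so $e^{\phi(\mathrm{waist})}=1+s_j e(s_j)$; combined with the model value this yields $L_j=2\pi^2 s_j\bigl(1+s_j e(s_j)\bigr)$ with $e$ log-smooth.

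The main obstacle is pinning down the leading coefficient and, more delicately, ruling out a logarithmic correction at order $s_j$: one must verify that neither the model restriction $e^{\psi_{\mathrm{mod}}(\mathrm{waist})}$ nor the boundary behaviour of $\phi$ contributes a $\log s_j$ term at first order, so that all of the logarithmic content is carried by $e(s_j)$ at order $s_j^2$. This rests on the precise normalisation of the cylinder model giving the Masur--Wolpert constant and on the vanishing of $\phi$ at the type~$\II$ boundary, both of which are consequences of the construction underlying Theorem~\ref{MetResMod.342}.
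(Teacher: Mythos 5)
There is a genuine gap, and it sits exactly where the paper has to do real work: locating the geodesic. Your key assertion is that, because the metric is rotationally invariant to infinite order at the type~$\II$ face, the shrinking geodesic agrees with the waist circle ``up to a displacement of size $O(s_j^{\infty})$.'' Rotational invariance to infinite order only forces the \emph{angular} dependence of the geodesic to be $O(s_j^{\infty})$; it does not fix the geodesic's position \emph{along} the collar. That position is determined by a critical-point equation for the circumference function of the \emph{full} metric, $\partial_w\bigl(\psi_{\mathrm{mod}}+\phi\bigr)=0$, and the displacement from the model waist is controlled by $\partial_w\phi$ divided by the Hessian of $\psi_{\mathrm{mod}}$, which itself degenerates (it is $O(s_j^2)$ in the cylindrical variable). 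So the displacement is only log-smoothly small, not $O(s_j^{\infty})$: with the paper's normal form \eqref{MetResMod.501}, where the conformal factor is $e^{2s^2f}$, it is $O(s_j^2)$ up to logs; with your weaker hypothesis that $\phi$ merely vanishes at $s_j=0$, it can be as large as $O(\log^k s_j)$ in the cylindrical coordinate. This is why the paper does not argue by symmetry: it parametrizes competitor curves $w=2+h(s)+su(\theta,s)$, shows the length functional is $C^2$ with a non-degenerate minimum, and applies the Implicit Function Theorem to conclude $\gamma_s(\theta)=(2+g(s)+g'(s,\theta),\theta)$ with $g$ log-smooth, $g(0)=0$ (not $O(s^\infty)$!) and only the oscillatory part $g'$ rapidly vanishing; the length is then obtained as a push-forward. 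Your proposal assumes the conclusion of precisely this step.

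A second, related problem is that your input from Theorem~\ref{MetResMod.342} is too weak to give the stated form of the expansion. You only use that $\phi$ is log-smooth and vanishes at $s_j=0$, i.e.\ first-order vanishing, and you acknowledge but do not resolve the resulting danger of low-order log terms. First-order vanishing is in fact insufficient: if $\phi$ contains an $s_j\log s_j$ term, or if the waist displacement is $O(\log^k s_j)$ as above, then $L_j/(2\pi^2 s_j)-1$ acquires terms like $s_j^2\log^2 s_j$, which cannot be written as $s_je(s_j)$ with $e$ log-smooth (log powers in a log-smooth expansion are bounded by the polynomial order). What rescues the statement is the quantitatively stronger fact, encoded in \eqref{MetResMod.501} and in the construction of the formal solution in Proposition~\ref{MetResMod.130} (where the expansion starts at $\alpha\ge 2$), that the log of the conformal factor relative to the plumbing model vanishes to \emph{second} order at the type~$\II$ face -- the Obitsu--Wolpert refinement. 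Your computation of the leading constant $2\pi^2$ and the overall strategy (evaluate the normal-form metric on an explicit circle) are consistent with the paper, but without the IFT localization of the geodesic and the second-order vanishing of the conformal correction, the argument does not close.
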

In the paper \cite{Wolf-Wolpert} of Wolf and Wolpert, it is claimed that
$L_j(s_{j})$ is real-analytic in $s_j,$ which would preclude the appearance
of logarithmic terms in $e(s_j)$ (as well as being a stronger analytic
statement). However there is an error in the bound deduced from equation
(2.2) in \cite{Wolf-Wolpert} which appears to invalidate the argument.

From the asymptotic expansion of the Weil--Petersson metric it follows that
the Ricci curvature has a similar expansion, see
\eqref{MetResMod.390}. Trapani in \cite{MR1176026} showed that the
corresponding Ricci metric is complete. In~\S\ref{Ricci} we show that this metric is locally
equal, to leading order, to a product of cusp metrics near the intersection
of divisors; this refines a result of Liu, Sun and Yau in \cite{MR2169586}.

The curvature tensor of the Weil--Petersson metric is also computed and
specifically the decay rates of the sectional curvature along the normal
and tangential directions near the divisors are given,
see~\eqref{MetResMod.407}.

The curvature form of the fiber hyperbolic metric on the vertical tangent
bundle of $\psi$ over $\cM_g$ was computed by Wolpert \cite{MR842050} who
showed that it pushes forward to a multiple of the K\"ahler form of the
Weil--Petersson metric. Reinterpreting this as a local index theorem,
Takhtajan and Zograf in \cite{MR1101693} extended the result to the pointed moduli
space $\Ms{g,n},$ finding extra `boundary terms' in the push-forward
as an additional K\"ahler form. This metric is given, as a cometric lifted using
the Weil--Petersson metric, by a sum over the fixed divisors in $\Ms{g,n+1}:$
\begin{equation}
G_{\TZ}(\zeta_1,\zeta_2)=\sum\limits_{j}\int_{\fib}E_j\frac{\zeta_1\overline{\zeta_2}}{\mu_H},\
\zeta_1,\ \zeta_2\in Q_p,\ p\in \Ms{g,n}.
\label{MetResMod.303}\end{equation}
Here $E_j$ is a boundary forcing term, the solution of $(\Lap+2)E_j=0$
which is in $L^2$ on the fibers including up to the marked points and nodes, except for a prescribed singularity
(corresponding to the non-$L^2$ formal solution) at the point corresponding to the $j$th
fixed divisor denoted as $F_{j}$; 
in \cite{MR1101693} it is obtained as an Eisenstein series. Such a function
is well-defined on any stable Riemann surface with cusps; it is strictly
positive away from the cusps and, with these resolved to boundaries, $E_j$
is smooth up to, and vanishes simply at each cusp boundary except the
`forcing boundary' where it has a singularity $s^{-2}$ where the metric is
locally $\frac{ds^2}{s^2}+s^2d\theta^2.$ The asymptotic behaviour of
$G_{\TZ}$ is determined by the structure of the $E_j.$ In~\cite{Park-Takhtajan-Teo} the K\"ahler potential and Chern forms of this metric were calculated.

Each boundary hypersurface of $\hMs{g,n}$ corresponds to either two or three boundary
hypersurfaces in the resolved universal curve $\hCs{g,n},$ depending on
whether the node to which this gives rise disconnects the Riemann surface
or not. These have interior fibers which are one or two connected Riemann
surfaces with two nodes and a cylindrical `neck' joining the nodes. The
behavior of $E_j$ is slightly different in the two cases. If the Riemann
surface remains connected without the neck, then $E_j$ approaches the
corresponding boundary forcing term for this Riemann surface and vanishes
simply at the neck with coefficient being a bridging function discussed in
the body of the paper. In case of separation into two Riemann surfaces
again $E_j$ approaches the `local' $E_j$ for the component that meets
$F_j,$ it vanishes simply on the neck and vanishes to fourth order at the
second component with leading coefficient which is the product of a scattering (or
$L$-function) constant and the singular boundary term $E_*$ on this Riemann
surface with pole at the node where it meets the neck. In all cases $E_j$
is globally log-smooth once multiplied by the square of a defining function
for $F_j.$

This description can be iterated to determine the precise leading term of
$E_j$ at a boundary surface of codimension $k$ in $\hMs{g,n}.$ This
corresponds to the intersection of $k$ divisors in $\oMs{g,n}$ and the fiber
is the initial Riemann surface subject to $k$ degenerations, each either
the shrinking of a geodesic or the `bubbling off' of a sphere due to the
collision of marked points (or marked points with nodes). In all cases the
fixed divisor $F_j$ meets one of the component surfaces and $E_j$
approaches the corresponding boundary forcing term there. Any other 
component is connected to $F_j$ through one or more paths, passing through
a sequence of nodes and necks. Consider those paths which pass through the
minimum number, $\sigma,$ of necks. Each of these gives rise to part of the
leading term of $E_j$ at the Riemann surface in question; it vanishes to
order $4\sigma$ there with coefficient the $E_*$ for that Riemann surface with
pole at the node through which the path entered, and another constant
coefficient formed by a product of scattering ($L$-function) factors
corresponding to the sequence of nodes through which the path passes; thus
the leading term of $E_j$ is in general a sum of such terms but all are
positive. At the necks essentially the same conclusion holds except that
the order of vanishing is $4\sigma+1$ with the coefficient a bridging
function.

The asymptotic behavior of $E_j$ at the boundary of $\hMs{g,n}$ leads to a
corresponding asymptotic expansion for the Takhtajan--Zograf metric,
analogous to \eqref{MetResMod.301}, but with combinatorial complications;
this refines results of Obitsu, To and Weng \cite{MR2443304}. At the
interior of a boundary hypersurface of $\hMs{g,n}$ the behavior is
relatively simple if the corresponding divisor lifts from $\oMs{g,n}.$
Namely the divisor itself has a local covering by either a moduli space
$\oMs{g-1,n+2}$ if the node does not separate and otherwise by some product
$\oMs{g_1,n_1+1}\times \oMs{g_2,n_2+1},$ $n_1+n_2=n,$ $g_1+g_2=g-1.$ Then
\begin{equation}
\begin{gathered}
g_{\TZ}=h(ds^2+s^{4}d\theta^2)+sh_1
g_{\TZ,g_1,n_1}+sh_2g_{\TZ,g_2,n_2}+O(s^2),\ n_1,n_2>0\\
g_{\TZ}=h(ds^2+s^{4}d\theta^2)+sh_1
g_{\TZ,g_1,n}+s^4h_2g_{\TZ, g_2,1}+O(s^2),\ n_1=n
\end{gathered}
\label{MetResMod.308}\end{equation}
has log-smooth coefficients. Here, the coefficients $h,$ $h_1$ and $h_2$
are $\theta-$invariant to all orders in $s$ and positive but not constant. We do
not completely explore the asymptotics of $g_{\TZ},$ but it is bounded
above by a multiple of the Weil--Petersson metric and always vanishes
relative to it in normal directions to the boundary faces; the same is
therefore true of the corresponding K\"ahler form.

The authors would like to acknowledge helpful conversations with Mike
Artin, Rafe Mazzeo, David Mumford, Jan Swoboda, Scott Wolpert and Mike Wolf
and also Semyon Dyatlov for comments on the manuscript and assistance with
the figures. We would also like to thank the referee for a careful reading
and valuable comments.

\paperbody

\section{Lefschetz maps}\label{Lefschetz}

We consider a complex manifold with normally intersecting and
self-intersecting divisors, $(C,G_*).$ Thus the $\{G_i\}$ are a finite
collection of closed immersed connected complex hypersurfaces and near each
point of $C$ there are \emph{admissible coordinates} $(z_*,\tau_*)$ where
the $z_l$ define the \emph{local} divisors passing through the point as
$\{z_l=0\}.$ On such a manifold there is a well-defined `logarithmic'
complex tangent bundle $\DT^{1,0}C$ and corresponding cotangent bundle
$\DL^{1,0}C$ determined by the $G_*.$ Namely the spaces of locally
holomorphic sections of $\DT^{1,0}C$ are the holomorphic vector fields
which are tangent to all the local divisors. In admissible coordinates
$\DT^{1,0}C$ is spanned by the holomorphic vector fields $z_l\pa_{z_l}$
and $\pa_{\tau_k}.$ The complex dual of this bundle, $\DL^{1,0}C,$ is
locally spanned in these coordinates by the $dz_l/z_l$ and $d\tau_k.$ The universal curve $\oCs{g,n}$ has such a structure (except for the orbifold points). And there is a natural map from  $\oCs{g,n}$ to the compactified moduli space $\oMs{g,n}$ which will be discussed below.

We consider Lefschetz maps, which have singularities modelled on the
`plumbing variety'
\begin{equation}
\phi:\bbC^2\ni(z,w)\longmapsto t=zw\in\bbC.
\label{MetResMod.282}\end{equation}

\begin{definition}\label{MetResMod.266} A \emph{Lefschetz map}
  $\phi:C\longrightarrow M$ is a holomorphic map between complex manifolds
  (subsequently orbifolds) with the following properties
\begin{enumerate} 
\item The fiber dimension is one: $\dim_{\bbC}C=\dim_{\bbC}M+1.$
\item The fibers of $\phi$ are compact.
\item  Both domain and range carry normally intersecting (and self-intersecting) divisors
  which will be denoted $(C,G_*,F_*)$ and $(M,G_*').$
\item The $F_l,$ $l=1,\dots,n$ are `fixed divisors', without self-intersections or
  intersections with the other $F_*$ and such that, for each $i,$
\begin{equation}
\phi:F_i\longrightarrow M\text{ is a biholomorphism.}
\label{MetResMod.317}\end{equation}
\item The other divisors, $G_i,$ in $C$ map onto the divisors in $M:$ 
\begin{equation}
\phi(G_i)=G_{i}'.
\label{MetResMod.318}\end{equation}
\item The $G_i$ have self-intersections, $S_i,$ of codimension two (if they exist) and for different $i$, these are also
  disjoint in $C;$ outside the $S_i,$ $\phi$ has surjective differential. 
\item Near each double point $p\in S_{i}\subset G_i$ there are admissible coordinates
  $z,w,z_{r},\tau_j$ in $C$ and coordinates $t,z_{r}',\tau_j'$ near the image of $p$ in
  $M$ such that 
\begin{equation}
\phi^*t=zw,\ \phi^*\tau_j'=\tau_j,\ \phi^*z_r'=z_r,
\label{MetResMod.319}\end{equation}
where $G_i$ is locally defined by $\{z=0\}\cup\{w=0\}$ and the $z_r'$ define the local
divisors through $p$ other than $G'_{i}$ which is defined by $t=0.$ 
\end{enumerate}
It follows that each of the fibers is a nodal Riemann surface, with marked
points from the intersections with the $F_l.$ The Lefschetz map is said to
be \emph{stable} if each of the component Riemann surfaces (when the nodal
points are separated) in the fibers are stable; i.e. for each component the genus $g$ and the total number of nodes and punctures $n$ satisfy $2g-2+n>0.$
\end{definition}

Consider the invariance properties of the local form of the `plumbing'
model for a Lefschetz map as in \eqref{MetResMod.282} near the singular
surfaces $S_i:$ 
\begin{equation}
\{(z,w)\in\bbC;|z|,|w|<\delta \}\ni (z,w)\longmapsto zw\in\bbC.
\label{9.3.2017.1}\end{equation}
This normal form can be regained after separate holomorphic changes of
coordinates in the disk $z=0$ and $w=0.$

\begin{lemma}\label{9.3.2017.7} Suppose that
\begin{equation}
z\longmapsto z(1+zf(z)),\ w \longmapsto w(1+wg(w))
\label{9.3.2017.2}\end{equation}
are separate holomorphic coordinate changes fixing the origins, then there
are holomorphic functions (germs near the origin) $a(w),$ $b(z)$ and
$e(z,w)$ such that
\begin{multline}
Z=z(1+zf(z)+wa(w)+zwe(z,w)),\\ W=w(1+wg(w)+zb(z)+zwe(z,w))
\text{ satisfy } ZW=t.
\label{9.3.2017.3}\end{multline}
\end{lemma}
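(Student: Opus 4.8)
The plan is to recast this three-function problem as a single scalar identity and then solve it by the holomorphic implicit function theorem. Writing $Z=zu$ and $W=wv$ with
\[
u=1+zf(z)+wa(w)+zwe(z,w),\quad v=1+wg(w)+zb(z)+zwe(z,w),
\]
the requirement $ZW=t=zw$ (the plumbing model has $t=zw$) is exactly $uv=1$. So the task is to choose germs $a,b,e$ making $u$ a unit whose reciprocal is $v$.

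First I would pin down $a$ and $b$, which are in fact forced by the boundary data. Restricting $uv=1$ to $\{w=0\}$ gives $(1+zf(z))(1+zb(z))=1$, and to $\{z=0\}$ gives $(1+wa(w))(1+wg(w))=1$; since $1+zf$ and $1+wg$ are units at the origin these determine
\[
b(z)=\frac{-f(z)}{1+zf(z)},\quad a(w)=\frac{-g(w)}{1+wg(w)},
\]
both holomorphic germs. This is consistent with the prescribed restrictions $Z|_{w=0}=z(1+zf(z))$ and $W|_{z=0}=w(1+wg(w))$ of \eqref{9.3.2017.2}, into which the terms $wa(w)$ and $zb(z)$ do not enter.

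Next, let $u_0,v_0$ denote the parts of $u,v$ with $e$ set to zero. The choice of $a,b$ guarantees that $u_0v_0-1$ vanishes on both $\{z=0\}$ and $\{w=0\}$, hence is divisible by $zw$, say $u_0v_0-1=zw\,h(z,w)$ with $h$ holomorphic. Since $u=u_0+zwe$ and $v=v_0+zwe$ share the same diagonal term, expanding gives $uv-1=zw\bigl(h+(u_0+v_0)e+zwe^2\bigr)$, so $uv=1$ is equivalent to the scalar equation
\[
zw\,e^2+(u_0+v_0)\,e+h=0.
\]
At the origin $u_0+v_0=2\neq0$ while the quadratic coefficient $zw$ vanishes, so the $e$-derivative of the left side equals $2$ there; the holomorphic implicit function theorem then yields a unique germ $e(z,w)$ solving this near the origin, which completes the construction.

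The only genuine content is the divisibility of $u_0v_0-1$ by $zw$, and this is precisely what the forced expressions for $a$ and $b$ deliver rather than an accident; once it is in hand the remaining step is a non-degenerate application of the implicit function theorem. The point requiring care is that a \emph{single} shared function $e$ must simultaneously correct both $u$ and $v$, but because that term is common to the two factors the residual equation is the single quadratic above, so no compatibility obstruction arises.
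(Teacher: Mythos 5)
Your proof is correct and follows essentially the same route as the paper's: your determination of $a$ and $b$ by restricting $uv=1$ to the axes $\{w=0\}$ and $\{z=0\}$ is exactly the paper's step of ``separating out the pure terms,'' yielding the identical formulas $a=-(1+wg)^{-1}g$ and $b=-(1+zf)^{-1}f$, and your final quadratic $zw\,e^2+(u_0+v_0)e+h=0$ solved by the holomorphic implicit function theorem is the paper's remaining mixed-term equation treated the same way. The only difference is packaging — your factorization $ZW=zw\,uv$ and the divisibility observation make the structure slightly more transparent than the paper's term-by-term expansion, but the decomposition and the key tool are the same.
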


\begin{proof} 
The identity we aim for is
\begin{equation}
zw(1+zf(z)+wa(w)+wze(z,w))(1+wg(w)+zb(z)+wze(z,w))=zw.
\label{9.3.2017.4}\end{equation}
Cancelling factors and expanding out this becomes 
\begin{multline}
wg(w)+zb(z)+zwe(z,w)
+zf(z)+wa(w)+zwe(z,w)+zwf(z)g(w)\\
+z^2b(z)f(z)+z^2wf(z)e(z,w)
+w^2g(w)a(w)+zwa(w)b(z)+zw^2a(w)e(z,w)\\
+w^2zg(w)e(z,w)+z^2wb(z)e(z,w)+z^2w^2e^2(z,w)
=zwh(zw)
\label{9.3.2017.5}\end{multline}
Separating out the `pure terms' it follows that \eqref{9.3.2017.5} is a
consequence of demanding
\begin{equation}
\begin{gathered}
w\left(g(w)+a(w)+wg(w)a(w)\right)=0\Longleftrightarrow a(w)=-(1+wg(w))^{-1}g(w)
\\
z\left(b(z)+f(z)+zb(z)f(z)\right)=0\Longleftrightarrow b(z)=-(1+zf(z))^{-1}f(z)
\\
\begin{aligned}
zw\big(2e(z,w)+f(z)g(w)+&zf(z)e(z,w)+a(w)b(z)+wa(w)e(z,w)\\
+&wg(w)e(z,w)+zb(z)e(z,w)+zwe^2(z,w))\big)=0.
\end{aligned}
\end{gathered}
\label{9.3.2017.6}\end{equation}
By the implicit function theorem the last equation has a unique, and holomorphic, solution
with $e(z,w)$ close to $-\ha f(z)g(w)$ for $z,$ $w$ small.
\end{proof}
\noindent Holomorphic
dependence on parameters follows from the same argument, so the result carries over to the
case of a Lefschetz map with base of dimension greater than one, in the
sense above, locally near each singular surface.

\begin{corollary}\label{MetResMod.410} For a stable Lefschetz fibration
  there are holomorphic coordinates near each singular point in terms of
  which both the Lefschetz map and the family of hyperbolic metrics on the
  singular fibre are in normal form.
\end{corollary}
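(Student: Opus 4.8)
The plan is to combine the classical existence of a distinguished holomorphic coordinate near a hyperbolic cusp with Lemma~\ref{9.3.2017.7}. The cusp coordinates on the two local branches of the singular fibre are chosen independently, and the lemma is exactly what reconciles these two separate reparametrizations with the product structure $\phi^*t=zw$ of the plumbing model.

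First I would record the cusp normal form. By stability, each component of the singular fibre through $p$ (with its nodes separated) is a stable marked Riemann surface, hence carries a unique complete hyperbolic metric of curvature $-1,$ and the two branches meeting at the node $p$ acquire cusps. A neighbourhood of a cusp is the quotient of a horoball in the upper half-plane by the parabolic translation $\zeta\mapsto\zeta+1,$ and the holomorphic map $\zeta\mapsto q=\exp(2\pi i\zeta)$ carries it isometrically onto a punctured disc bearing the model cusp metric $|dq|^2/\bigl(|q|^2(\log|q|)^2\bigr).$ Thus on each branch there is a holomorphic coordinate vanishing at the node, unique up to the rotation $q\mapsto e^{i\alpha}q$ that preserves the rotationally invariant model metric, in which the fibre metric is exactly in normal form.

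Next, starting from admissible coordinates $z,w,z_r,\tau_j$ near $p$ with $\phi^*t=zw$ as in~\eqref{MetResMod.319}, the chosen cusp coordinates on the branches $\{w=0\}$ and $\{z=0\}$ differ from $z$ and $w$ by holomorphic reparametrizations with non-vanishing leading coefficients. After rescaling $z,$ $w$ and the base coordinate $t$ --- which preserves the plumbing form --- these are made tangent to the identity, i.e.\ of the form $z\mapsto z(1+zf(z))$ and $w\mapsto w(1+wg(w))$ as in~\eqref{9.3.2017.2}. Lemma~\ref{9.3.2017.7} then supplies holomorphic $a,b,e$ and new coordinates $Z,W$ with $ZW=t$ restricting on $\{w=0\}$ and $\{z=0\}$ to these cusp coordinates. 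Retaining $z_r,\tau_j,$ the Lefschetz map is again in the form $\phi^*t=ZW$ while the hyperbolic metric on each branch of the singular fibre is in model form, which is the assertion.

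The one point needing care, and the reason the lemma is indispensable, is that the cusp coordinates are defined only on the singular fibre $\{t=0\}$: extending them across a full neighbourhood in $C$ without destroying the product $\phi^*t=ZW$ is not automatic and is precisely what Lemma~\ref{9.3.2017.7} achieves. For a base of dimension greater than one, one takes $f$ and $g$ from the cusp coordinates of the fibre through $p,$ independent of the transverse parameters $\tau,$ so that the construction yields the metric in normal form on that singular fibre, while the holomorphic dependence on parameters recorded after Lemma~\ref{9.3.2017.7} guarantees that $Z$ and $W$ are holomorphic throughout a neighbourhood in $C.$
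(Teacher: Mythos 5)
Your proof is correct and is essentially the paper's own argument: the corollary is stated there without a separate proof precisely because, as you spell out, it follows from the classical cusp coordinates on the two branches of the singular fibre, the leading-coefficient rescaling of $z,$ $w,$ $t$ that reduces to the tangent-to-identity case, and Lemma~\ref{9.3.2017.7}, whose whole purpose is to extend the fibre-defined reparametrizations to a neighbourhood compatibly with $\phi^*t=ZW.$ Your fibre-wise handling of the transverse parameters (taking $f$ and $g$ from the fibre through $p$) is consistent with, and if anything more careful than, the paper's remark on holomorphic parameter dependence, since the cusp coordinates of the hyperbolic metrics vary only real-analytically, not holomorphically, with the base moduli.
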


Conversely:
\begin{lemma}\label{9.3.2017.8} Any local holomorphic coordinate transformation
  in $z,$ $w$ near a singular point which leaves the form of the family of
  hyperbolic metrics and the coordinate form of the Lefschetz
  map unchanged can only change the defining function for the divisor in
  the base from $t$ to $ct(1+th(t))$ where $|c|=1.$
\end{lemma}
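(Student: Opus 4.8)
The plan is to reduce the statement to the single assertion $|c|=1$ for the leading coefficient of the induced change of base coordinate, and then to extract this from the normal form of the fiber metric. First I would record what the two invariance hypotheses buy. Because the coordinate form of the Lefschetz map is preserved, there is a new holomorphic defining function $T$ for the base divisor with $\phi^*T=ZW$; since $T$ is constant on the fibers of $\phi$, which are the level sets of $zw$, we must have $ZW=F(zw)$ for a holomorphic germ $F$ with $F(0)=0$ and $F'(0)=c\neq0$ (invertibility of the change of base coordinate). Any such germ can be written $F(t)=ct\bigl(1+th(t)\bigr)$ with $h$ holomorphic, simply by setting $th(t)=F(t)/(ct)-1$. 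Thus the content of the lemma is precisely that $|c|=1$; the function $h$ is then automatically unconstrained.

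Next I would bring in the fiber metric through Corollary~\ref{MetResMod.410}, which puts the family of hyperbolic metrics into a fixed normal form on $\{zw=t\}$: a collar metric whose conformal factor, relative to $|dz|^2/|z|^2$, is the universal expression built from $\pi/\log|t|$ and $\csc^2\!\bigl(\pi\log|z|/\log|t|\bigr)$, with waist geodesic at $|z|=|t|^{1/2}$ of length $2\pi^2/\lvert\log|t|\rvert$. The coordinate change restricts on each fiber to a biholomorphism $\{zw=t\}\to\{ZW=F(t)\}$ which by hypothesis carries the normal-form metric to the normal-form metric. The distinguished boundary defining function is $s=1/\log(|t|^{-1})$, and the leading terms of the normal form -- in particular the waist length $2\pi^2 s$ and the full radial profile of the $\csc^2$ factor -- are a single universal function of $s$.

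The crux is then a period/scale matching. Under the change the base coordinate transforms as $t\mapsto F(t)=ct(1+th(t))$, so the defining function transforms as
\begin{equation}
s=\frac{-1}{\log|t|}\longmapsto s'=\frac{-1}{\log|F(t)|}=\frac{-1}{\log|t|+\log|c|+\log|1+th|}=s\bigl(1+s\log|c|+O(s^2\log s)\bigr).
\end{equation}
Preserving the normal form means that the universal $s$-dependence of the metric is unchanged, so $s'$ must agree with $s$ through the order at which these universal leading terms are detectable; the first discrepancy occurs at order $s^2$ with coefficient $\log|c|$ (equivalently, the two collars carry distinct radial scales $\lvert\log|t|\rvert$ and $\lvert\log|F(t)|\rvert$ in the argument of the $\csc^2$, and two sinusoids of different frequency cannot coincide). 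This forces $\log|c|=0$, i.e.\ $|c|=1$, while the higher-order mismatch governed by $h$ is exactly the admissible log-smooth freedom and is left unconstrained. Finally I would note that every $F$ with $|c|=1$ is genuinely realized, by composing the rotations $z\mapsto e^{i\alpha}z$, $w\mapsto e^{-i\alpha}w$ (isometries of the model that multiply $t$ by a unit) with the tangent-to-the-identity changes produced in Lemma~\ref{9.3.2017.7}, so no stronger conclusion holds.

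I expect the main obstacle to be the honest bookkeeping in this last matching step. The restricted fiber map is only a germ near the node and in general involves both $z$ and $w$ (a Laurent, not a pure-scaling, dependence along the fiber), and the normal form is a model rather than the exact metric; so one must argue that the leading coefficient $c$ is detectable precisely at the first subleading order of $s$ -- via the rigidity that two members of the model collar family are normal-form isometric only if their waist lengths, equivalently their logarithmic collar scales, agree -- while verifying that this rigidity does not propagate to the higher coefficients, which would wrongly force $F(t)=ct$.
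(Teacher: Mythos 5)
Your reduction step is fine and matches the paper's algebra: preservation of the coordinate form of the Lefschetz map gives $ZW=F(zw)$ with $F(t)=ct(1+th(t))$, $c\neq 0$, so everything hinges on $|c|=1$. But your route to $|c|=1$ goes through the collar geometry of the \emph{nearby smooth} fibers, and that is where the gap lies. The hypothesis, read through Corollary~\ref{MetResMod.410}, concerns the normal form of the hyperbolic metric on the \emph{singular} fibre: the coordinates $z,w$ restrict on the two local branches $\{w=0\}$, $\{z=0\}$ to the distinguished cusp coordinates in which the cusp metric is exactly standard. The hyperbolic metrics on the nearby fibers $\{zw=t\}$, $t\neq0$, are \emph{never} exactly the plumbing model (that is the whole point of the paper), so the rigidity you invoke --- ``two members of the model collar family are normal-form isometric only if their waist lengths agree'' --- does not apply as stated; and if it did apply exactly, it would prove too much: $|F(t)|=|t|$ for all small $t$ forces $1+th(t)$ to be a holomorphic function of constant modulus, hence constant, i.e.\ $h\equiv0$, contradicting the freedom that the lemma (and Lemma~\ref{9.3.2017.7}) allows. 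To detect $\log|c|$, which perturbs the collar scale at relative order $s$, while remaining blind to $h$ (which enters only at order $|t|=e^{-1/s}$, exponentially small), you need the quantitative statement that the true fiber metric agrees with the model collar to relative order $o(s)$ --- this is precisely the Obitsu--Wolpert/Wolpert estimate $L=2\pi^2s\bigl(1+O(s^2)\bigr)$, a nontrivial analytic input which is not established at this point of the paper and which you flag as ``the main obstacle'' but never supply. As written, the matching step is therefore incomplete.

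The paper's own proof avoids all of this and stays on the singular fibre: preservation of the Lefschetz normal form forces the transformation to fix the node and (up to swapping) each branch $\{z=0\}$, $\{w=0\}$; the classical uniqueness, up to rotation, of the holomorphic coordinate in which a hyperbolic cusp takes the standard form then pins down the restriction of the transformation to each branch up to a unimodular constant factor, so $z\mapsto e^{i\theta}z(1+\cdots)$, $w\mapsto e^{i\theta'}w(1+\cdots)$; multiplying, $t=zw\mapsto e^{i(\theta+\theta')}t\bigl(1+O(z,w)\bigr)$, which, being a pullback from the base, equals $ct(1+th(t))$ with $c=e^{i(\theta+\theta')}$, whence $|c|=1$. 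If you want to salvage your approach you must either import Wolpert's geodesic-length asymptotics explicitly as an external input, or replace the collar comparison on nearby fibers by this elementary cusp-coordinate rigidity on the nodal fibre itself.
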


\begin{proof} If a coordinate transformation preserves the normal form for
  the  Lefschetz map it must fix the singular point and hence the preimage
  of its image in the base. Thus it must map the surfaces $z=0$ and $w=0$
  into themselves, or each other. The latter possibility  can be ignored,
  since we may simply exchange $z$ and $w$ and preserve the form. Thus the
  normal form of the hyperbolic metric near the ends fixes the conformal
  structure near the end and hence the coordinates up to a constant factor
  of norm $1.$ The coordinate transformation on the total space must therefore be
  of the form $z\longmapsto e^{i\theta}z(1+zf(z)+wF(z,w)),$ $w\longmapsto
  e^{i\theta'}w(1+wg(w)+zG(z,w))$ with $\theta$ and $\theta'$ real. Thus
\begin{equation*}
t\longmapsto e^{i(\theta+\theta')}t(1+O(z,w))=ct(1+th(t))\Longrightarrow |c|=1. 
\label{9.3.2017.9}\end{equation*}
\end{proof}

\begin{proposition}\label{MetResMod.411} The divisors $G'_i$ in the base of a
  stable Lefschetz map have natural hermitian structures on their normal bundles.
\end{proposition}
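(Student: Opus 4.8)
The plan is to construct the metric by normalizing the canonical local defining function of $G'_i$ supplied by the hyperbolic geometry. Recall that $N G'_i$ is a holomorphic line bundle over $G'_i$ (working, if $G'_i$ self-intersects, on its normalization near each branch), and a hermitian metric on it is the same thing as a smooth positive assignment of $|\,\cdot\,|$ to a local holomorphic frame, consistent under change of frame. A local holomorphic defining function $t$ for $G'_i$ trivializes the conormal bundle $N^*G'_i$ through the frame $[dt]$, dually the frame $[\partial_t]$ of $N G'_i$; the only obstruction to simply declaring $|[dt]|=1$ is the scaling ambiguity $t\mapsto u t$ with $u$ holomorphic and nonvanishing, which rescales $|[dt]|$ by $|u|^{-1}$. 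So the entire point is to pin $t$ down up to a factor of modulus one.

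This rigidity is exactly what the stable Lefschetz structure provides. At every point $p$ of $G'_i$ the fiber is a nodal curve whose node lies over $p$ in $S_i$, and by Corollary \ref{MetResMod.410} there are local holomorphic coordinates $(z,w,\dots)$ in $C$ in which both the Lefschetz map is in the plumbing normal form $t=zw$ and the fiber hyperbolic metrics are in cusp normal form; in particular $t$ is a distinguished defining function for $G'_i=\{t=0\}$ near $p$. By Lemma \ref{9.3.2017.8}, any two such normal-form coordinate systems produce defining functions related by $t'=c\,t(1+t h(t))$ with $|c|=1$. Restricting the differential to $G'_i$ gives $[dt']=c\,[dt]$ with $|c|=1$, so the transition functions between these distinguished conormal frames have modulus one at every point of $G'_i$. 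Hence the prescription $|[dt]|=1$ is independent of the choices and defines a global hermitian metric on $N^*G'_i$, dually on $N G'_i$; it is smooth and positive because in each patch it is by definition the norm making the holomorphic frame $[dt]$ a unit frame, and it varies holomorphically in parameters by the parametrized form of Lemma \ref{9.3.2017.7} noted just after its proof.

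The construction has a transparent intrinsic meaning that justifies the word ``natural.'' From $t=zw$ one reads off that the Lefschetz Hessian identifies $N_p G'_i$ with $T_a\otimes T_b$, where $T_a$ and $T_b$ are the tangent lines at the node $q$ to the two local branches $\{w=0\}$ and $\{z=0\}$ of the fiber, via $\partial_t\leftrightarrow \partial_z\otimes\partial_w$ (compatibly with the rescalings $z\mapsto \lambda z$, $w\mapsto \mu w$, under both of which each side scales by $(\lambda\mu)^{-1}$). Each branch is, near $q$, a neighborhood of a cusp of a stable hyperbolic component, and the cusp normal form determines the coordinate, hence the frame of the tangent line, up to a unit-modulus rotation; declaring that frame to have length one puts a hermitian metric on each of $T_a$ and $T_b$, and the resulting tensor-product metric on $T_a\otimes T_b$ is precisely $|\partial_t|=1$. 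It is here that stability enters: it guarantees that each component of the nodal fiber carries a complete hyperbolic metric, and therefore the cusp normal form at the node.

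The one step requiring care, and the only real content, is the modulus-one rigidity: that the leading coefficient relating any two admissible defining functions for $G'_i$ has absolute value one, uniformly over all of $G'_i$ and holomorphically in parameters. This is supplied by Lemma \ref{9.3.2017.8} together with its parametrized extension; once that is in hand the remaining assertions (consistency of $|[dt]|=1$, smoothness, positivity) are formal. I would therefore organize the argument as: (i) record that a hermitian metric on $N G'_i$ is a consistent normalization of the frame $[\partial_t]$; (ii) invoke Corollary \ref{MetResMod.410} to get the distinguished $t$ at each point of $G'_i$; (iii) invoke the parametrized Lemma \ref{9.3.2017.8} to get unit-modulus transitions and conclude; and (iv) record the $T_a\otimes T_b$ interpretation to make naturality manifest.
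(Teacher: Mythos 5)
Your proposal is correct and follows essentially the same route as the paper: the paper's (one-sentence) proof likewise observes that defining functions $t$ in which the Lefschetz map and fiber hyperbolic metrics take normal form (Corollary~\ref{MetResMod.410}) are, by Lemma~\ref{9.3.2017.8}, determined up to a factor of modulus one on the divisor, so declaring $|dt|=1$ yields a well-defined hermitian structure on the (co)normal bundle. Your additional $T_a\otimes T_b$ interpretation of the normal bundle at the node is a nice piece of extra context but not part of the paper's argument.
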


\begin{proof} The possible changes of local holomorphic defining function
  for a divisor $G^*_i$ in the base, in terms of which the Lefschetz map
  takes normal form near the corresponding $S_i,$ are limited to have
  differential of norm one on the divisor and hence induce a hermitian
  structure.
\end{proof}

As the name indicates, if one fiber of a Lefschetz map is stable then the
map is stable in a neighborhood of that fiber. In the case of a singular
fiber the arithmetic genus $g_a$ is the sum of the number of pairs of nodal
points and the genus from each of the components of the nodal surface with
nodes separated and it follows that $g_a+2n>2$ is then constant.

Away from the self-intersections, $S_i,$ of the $G_i,$ $\phi$ is a
holomorphic submersion mapping the divisors $G_i$ onto the $G'_{i}$ so the
logarithmic differentials $dz'_{i}/z'_{i}$ pull back to be $dz_i/z_i.$ Near
$p\in S_i$ this remains true for the divisors other than $G_i',$ locally
defined by $t$ which, by \eqref{MetResMod.319} satisfies 
\begin{equation}
\phi^*(dt/t)=dz/z+dw/w.
\label{MetResMod.320}\end{equation}
So at these points, $\phi^*$ maps $\DL^{1,0}_{\phi(p)}M$ injectively to $\DL^{1,0}_{p}C$ 
and hence there is a well-defined dual log-differential which is still surjective
\begin{equation}
\phi_*:\DT^{1,0}_pC\longrightarrow \DT^{1,0}_{\phi(p)}M,\ \forall\ p\in C.
\label{MetResMod.268}\end{equation}
In this sense a Lefschetz map is a `log fibration' (the complex analog of
the b-fibrations considered in the real case below).

\begin{lemma}\label{MetResMod.321} The null bundle of the logarithmic differential
  $L=L_{\phi}\subset\DT^{1,0}C$ is a holomorphic subbundle which
  reduces to the fiber tangent bundle at regular points.
\end{lemma}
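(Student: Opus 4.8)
The plan is to read off everything from the surjectivity of the logarithmic differential already established in \eqref{MetResMod.268}. Regard $\phi_*$ as a morphism of holomorphic vector bundles $\phi_*:\DT^{1,0}C\to\phi^*\DT^{1,0}M$ over $C.$ Since $\operatorname{rk}\DT^{1,0}C=\dim_{\bbC}C=\dim_{\bbC}M+1=\operatorname{rk}\phi^*\DT^{1,0}M+1$ and $\phi_*$ is pointwise surjective for every $p\in C,$ the rank--nullity count forces $\ker\phi_*$ to have fiber dimension exactly one at every point. A morphism of holomorphic bundles of constant rank has a holomorphic kernel which is a subbundle, so $L=L_\phi=\ker\phi_*$ is a holomorphic line subbundle of $\DT^{1,0}C.$ It then remains only to identify $L$ with the fiber tangent bundle at regular points and to exhibit an explicit holomorphic frame across the singular surfaces $S_i;$ I would do this by computing $\phi_*$ on the standard admissible frame $z\pa_z,\ w\pa_w,\ z_r\pa_{z_r},\ \pa_{\tau_j}.$

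Away from the $S_i,$ where $\phi$ is an honest submersion, the identification is direct. Off all divisors $\DT^{1,0}C=T^{1,0}C$ and $L=\ker d\phi$ is by definition the fiber tangent bundle. At a point of $G_i\setminus S_i$ I would use the plumbing coordinates of \eqref{MetResMod.319} in the regime where one branch, say $w,$ is a nonvanishing unit; then from $\phi^*(dt/t)=dz/z+dw/w$ one computes $\phi_*(z\pa_z)=t\pa_t$ and $\phi_*(w\pa_w)=t\pa_t,$ while $\phi_*(z_r\pa_{z_r})=z_r'\pa_{z_r'}$ and $\phi_*(\pa_{\tau_j})=\pa_{\tau_j'}.$ Hence $L$ is spanned by $z\pa_z-w\pa_w,$ and under the natural map $\DT^{1,0}C\to T^{1,0}C$ this section maps to $-w\pa_w$ on $\{z=0\},$ i.e. precisely the honest tangent to the branch of the singular fiber; off $\{t=0\}$ it maps to the fiber direction $\pa_w-(z/w)\pa_z$ of $\{zw=t\}.$ This is the meaning of ``reduces to the fiber tangent bundle.''

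The only real content is the behaviour across the self-intersection $S_i,$ where the ordinary fiber tangent bundle degenerates because the fiber acquires a node. Near a point $p\in S_i$ I would again use \eqref{MetResMod.319}, but now with both $z$ and $w$ vanishing. The same pairing against $\phi^*(dt/t)=dz/z+dw/w$ gives $\phi_*(z\pa_z)=\phi_*(w\pa_w)=t\pa_t,$ together with $\phi_*(z_r\pa_{z_r})=z_r'\pa_{z_r'}$ and $\phi_*(\pa_{\tau_j})=\pa_{\tau_j'},$ so the kernel is again spanned by the single log vector field $z\pa_z-w\pa_w.$ This is a nowhere-vanishing holomorphic section of $\DT^{1,0}C$ near $p,$ so it furnishes the holomorphic frame that extends the fiber tangent bundle as a holomorphic line bundle across the node; under the natural map to $T^{1,0}C$ it vanishes at $p$ itself, consistently with the absence of a fiber tangent direction at the node. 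The main point to check carefully is that this description is independent of the admissible coordinates, which follows from the intrinsic definition $L=\ker\phi_*$ and, at the level of the normal form, from the permitted coordinate changes catalogued in Lemma~\ref{9.3.2017.7}: these multiply $z\pa_z-w\pa_w$ only by a nonvanishing holomorphic unit and hence leave the line bundle $L$ unchanged.
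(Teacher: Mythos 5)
Your proposal is correct and follows essentially the same route as the paper: the paper's proof is exactly the local normal-form computation, exhibiting a non-vanishing holomorphic fiber-tangent frame near regular points and the section $z\pa_z-w\pa_w$ as the null space in the coordinates \eqref{MetResMod.319} near the $S_i.$ Your additional rank--nullity/constant-rank packaging of the subbundle property is a harmless (and correct) supplement, but the substantive content --- the coordinate computation of $\ker\phi_*$ --- coincides with the paper's argument.
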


\begin{proof} This is immediate from the local form of $\phi$ required in
  the definition. Near regular points of the map. the fiber is smooth
  and $L$ is spanned by a non-vanishing holomorphic vector field tangent to
  the fibers. Near singular points there are coordinates as in
  \eqref{MetResMod.319} and the log-differential has null space spanned by
  $z\pa_z-w\pa_w.$ 
\end{proof}

The fiber $\dbar$-operator on regular fibers is naturally a differential
operator 
\begin{equation*}
\dbar:\CI(C_{\reg};L)\longrightarrow \CI(C_{\reg};L\otimes \overline{L}^{-1}).
\label{MetResMod.322}\end{equation*}

\begin{lemma}\label{MetResMod.323} The fiber $\dbar$-operator extends
  smoothly to a `log-differential' operator 
\begin{equation}
\Lbar:\CI(C;L)\longrightarrow \{u\in\CI(C;L\otimes
\overline{L}^{-1});u=0\Mat \bigcup_iS_i\cup\bigcup_l F_l\}.
\label{MetResMod.324}\end{equation}
\end{lemma}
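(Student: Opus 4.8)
The plan is to reduce everything to the local normal forms supplied by Lemma~\ref{MetResMod.321} and to compute the fiber $\dbar$-operator in a holomorphic frame for $L$. Recall that $L$ is locally spanned by a nonvanishing holomorphic section $V$ of $\DT^{1,0}C$: by $V=\pa_\zeta$ in a fiber coordinate $\zeta$ at a regular point, by $V=z\pa_z-w\pa_w$ near a double point $p\in S_i$ in the coordinates of~\eqref{MetResMod.319}, and by $V=z\pa_z$ near a fixed divisor $F_l=\{z=0\}$, where the sections of $L$ are the fiber vector fields vanishing at the marked point. Writing a smooth section as $u=aV$ with $a\in\CI(C)$, I would compute $\dbar^{\fib}u$ on the regular fibers, re-express it in a frame of $L\otimes\overline{L}^{-1}$, and read off the extension to all of $C$.

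For the computation near $S_i$ the regular fiber $\{zw=t\}$, $t\neq 0$, is coordinatized by $z$ with $w=t/z$, and there $V$ restricts to $z\pa_z$, so that $u|_{\fib}=(az)\pa_z$ and
\begin{equation*}
\dbar^{\fib}u=z\,\pa_{\bar z}\!\big(a|_{\fib}\big)\,\pa_z\otimes d\bar z .
\end{equation*}
On the fiber $\pa_z=z^{-1}V$, while the frame of $\overline{L}^{-1}$ dual to $\overline V=\bar z\pa_{\bar z}-\bar w\pa_{\bar w}$ is $\omega=d\bar z/\bar z$, so that $d\bar z=\bar z\,\omega$. Substituting these, and using $\pa_{\bar z}(a|_{\fib})=\pa_{\bar z}a-(\bar w/\bar z)\pa_{\bar w}a$, the singular-looking factors combine to give
\begin{equation*}
\Lbar u=\big(\overline V a\big)\,V\otimes\omega ,\qquad \overline V=\bar z\pa_{\bar z}-\bar w\pa_{\bar w}.
\end{equation*}
The identical manipulation at a regular point and near $F_l$ yields the same formula, with $\overline V$ the complex conjugate of the chosen $L$-frame (and depending on the base variables $z_r,\tau_j$ only through $a$, since $\overline V$ differentiates only in the fiber directions). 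I would then check that this expression is independent of the frame: under $V\mapsto\lambda V$ with $\lambda$ holomorphic one has $a\mapsto a/\lambda$, $\overline V\mapsto\bar\lambda\,\overline V$ and $\omega\mapsto\bar\lambda^{-1}\omega$, and since $\overline V\lambda=0$ the product $(\overline V a)\,V\otimes\omega$ is unchanged; being local and frame-independent, and agreeing with $\dbar^{\fib}$ on $C_{\reg}$, it globalizes.

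The two asserted properties are then immediate from the normal-form identity. Smoothness: $\overline V$ is a smooth (antiholomorphic log) vector field and $V\otimes\omega$ a smooth nonvanishing frame of $L\otimes\overline{L}^{-1}$, so $(\overline V a)\,V\otimes\omega\in\CI(C;L\otimes\overline{L}^{-1})$ for every $a\in\CI(C)$, which is exactly the claimed smooth extension across the singular fibers. Vanishing: as an ordinary vector field $\overline V$ has coefficients $\bar z,\bar w$ near $S_i$ and $\bar z$ near $F_l$, which vanish precisely on $S_i$, respectively $F_l$; hence $\overline V a$ vanishes on $\bigcup_i S_i\cup\bigcup_l F_l$ for every smooth $a$, placing the image of $\Lbar$ in the required subspace.

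The step needing the most care is the bookkeeping of the holomorphic and antiholomorphic defining-function factors: the bare operator $\dbar^{\fib}$ appears to degenerate as $t\to 0$ and across the node, but passing to the log-frame supplies the compensating factors $z^{-1}$ (from $\pa_z=z^{-1}V$) against the $z$ coming from $V=z\pa_z$, together with the factor $\bar z$ (from $d\bar z=\bar z\,\omega$) that converts the apparently singular $\pa_{\bar z}(a|_{\fib})$ into the manifestly smooth $\overline V a$. Confirming this cancellation, and that the two disk components of the nodal fiber $t=0$ glue consistently under it, is the heart of the argument; once the identity $\Lbar u=(\overline V a)\,V\otimes\omega$ is in hand the smoothness of the extension and the vanishing at $S_i$ and $F_l$ follow at once.
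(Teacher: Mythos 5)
Your proof is correct and follows essentially the same route as the paper: compute $\Lbar$ near a node in the Lefschetz coordinates of \eqref{MetResMod.319}, obtaining $\Lbar(aV)=(\overline V a)\,V\otimes\omega$ with $\overline V=\bar z\pa_{\bar z}-\bar w\pa_{\bar w}$, and observe that the coefficient $\overline V a$ vanishes at $S_i$ (and likewise at $F_l$) because the coefficients of $\overline V$ do — this is exactly the paper's formula \eqref{MetResMod.325} and its one-line conclusion. Your version is more detailed (explicit frames, frame-independence, the $F_l$ case), and your identification of the dual frame $\omega$ of $\overline{L}^{-1}$ is in fact cleaner than the paper's display, which appears to carry a sign slip in the factor $\frac{d\bar z}{\bar z}+\frac{d\bar w}{\bar w}$.
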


\begin{proof} This is clear away from the singular points $S_i\subset C.$
  Near each such point in local coordinates \eqref{MetResMod.319}, 
\begin{equation}
\Lbar(a(z\pa_z-w\pa_w))=\ha
\left({\bar z\pa_{\bar z}-\bar w\pa_{\bar w}} a\right)(z\pa_z-w\pa_w)\cdot
(\frac{d\bar z}{\bar z}+\frac{d\bar w}{\bar w})
\label{MetResMod.325}\end{equation}
where the two local components of the singular fiber are $z=0$ and $w=0$
and the coefficient vanishes at the $S_i.$ 
\end{proof}

The stability assumption on the fibers and an application of Riemann--Roch
show that $\Lbar$ is injective on each fiber since the null space consists
of holomorphic vector fields vanishing at the fixed divisors and, on the
$G_i$ at the $S_i.$

As already noted the essential property of the Knudsen-Deligne-Mumford Lefschetz map
$\oCs{g,n}\longrightarrow \oMs{g,n}$ is that it is universal for stable
Lefschetz maps. Namely

\begin{theorem}[Knudsen--Deligne--Mumford\cite{MR0262240,
      MR702953}]\label{MetResMod.327} For any stable Lefschetz map, in the
  sense of Definition~\ref{MetResMod.266}, there is a unique commuting
  square of holomorphic maps
\begin{equation}
\xymatrix{
C\ar[r]^{\chi^{\#}}\ar[d]_{\phi}&\oCs{g,n}\ar[d]^{\opsi}\\
M\ar[r]^{\chi}&\oMs{g,n}
}
\label{MetResMod.326}\end{equation}
where $\chi^{\#}$ is a fiber biholomorphism.
\end{theorem}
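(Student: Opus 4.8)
The plan is to recognize a stable Lefschetz map, in the sense of Definition~\ref{MetResMod.266}, as precisely an analytic flat family of stable $n$-pointed nodal curves of arithmetic genus $g,$ and then to appeal to the representability of the Knudsen--Deligne--Mumford moduli stack. Once the dictionary is in place, the existence and uniqueness of the classifying square~\eqref{MetResMod.326} is exactly the universal property of $\oMs{g,n},$ combined with the identification $\oCs{g,n}=\oMs{g,n+1}$ of the universal curve as the moduli space carrying one additional marked point.

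First I would set up this dictionary explicitly. Properties (1) and (2) of Definition~\ref{MetResMod.266} give a proper holomorphic map with one-dimensional compact fibers. The fixed divisors $F_l,$ each mapping biholomorphically onto $M$ by~\eqref{MetResMod.317}, provide $n$ disjoint holomorphic sections, i.e. the marked points. The plumbing normal form $\phi^*t=zw$ of~\eqref{MetResMod.319} shows that the only fiber singularities are ordinary double points, that the total space $C$ is smooth, and that $\phi$ is flat across the nodal fibers; outside the $S_i$ the map is a submersion. The stability hypothesis guarantees that every component of every fiber is stable, so each fiber is a stable pointed nodal curve, and by the arithmetic-genus computation recorded after Proposition~\ref{MetResMod.411} the arithmetic genus $g_a$ and the number $n$ of marked points are locally constant, hence equal to the prescribed $(g,n).$ This is exactly a family of stable $n$-pointed genus $g$ curves over $M.$

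Granting this, the universal property supplies a unique holomorphic map $\chi:M\longrightarrow\oMs{g,n}$ classifying the family, and the family is recovered as the fiber product $C\cong M\times_{\oMs{g,n}}\oCs{g,n};$ projection to the second factor is the fiber biholomorphism $\chi^{\#},$ and commutativity of~\eqref{MetResMod.326} is built into the pullback. That $\chi^{\#}$ is a fiber \emph{biholomorphism} rather than merely a map follows because the pullback agrees with $C$ fiber by fiber, and uniqueness of the pair $(\chi,\chi^{\#})$ reflects the rigidity of stable pointed curves, which admit only finitely many automorphisms.

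The delicate point—and where I expect the genuine work to lie—is the orbifold/stack nature of $\oMs{g,n}.$ Because stable curves may carry nontrivial finite automorphism groups, $\oMs{g,n}$ is not a fine moduli space in the category of varieties, so the classifying map $\chi$ and the biholomorphism $\chi^{\#}$ are canonically defined only at the level of the moduli stack, equivalently of tied orbifolds; one must phrase both existence and uniqueness there and then descend to the coarse objects. A second obstacle is that Deligne--Mumford and Knudsen work algebraically, whereas here $\phi$ is only holomorphic, so bridging the two requires either the analytic version of the moduli functor or a fiberwise GAGA argument near each $S_i$ to identify the analytic degeneration with the algebraic plumbing family. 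Since the statement is attributed to~\cite{MR0262240, MR702953}, the hard part of reconstructing representability is properly theirs; the intended proof here is to verify the dictionary above and invoke their theorem.
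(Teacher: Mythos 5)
Your proposal is essentially the paper's own treatment: the paper offers no proof of Theorem~\ref{MetResMod.327} beyond attributing it to Knudsen--Deligne--Mumford \cite{MR0262240,MR702953}, the content being exactly the dictionary you spell out between stable Lefschetz maps (compact one-dimensional fibers, disjoint sections from the fixed divisors, plumbing normal form at the nodes, stability of fiber components, local constancy of $(g_a,n)$) and flat families of stable $n$-pointed nodal curves, together with the universal property established in those references. Your flagging of the stack/orbifold subtlety and the analytic-versus-algebraic bridge as the genuinely delicate points matches how the paper handles them, namely by deferring to the cited works and, for the infinitesimal/germ-level criterion, to Robbin--Salamon \cite{Robbin-Salamon}.
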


In \cite{Robbin-Salamon}, Robbin and Salamon give an infinitesimal criterion
for such universality at the germ level. We proceed to review this result.

The operator $\Lbar$ in \eqref{MetResMod.324} is not surjective, in fact
again by Riemann-Roch its image has complement of dimension $3g_a-3+n.$ As
noted above, a Lefschetz map has surjective log-differential. It follows
that any smooth log vector field (i.e.\ tangent to the divisors) on the
base is $\phi$-related to such a smooth vector field on $C.$ Consider the
sheaf over $M$ with sections over an open set $O\subset M$
\begin{multline}
\cE(O)\\
=\{V\in\CI(\phi^{-1}(O);T^{1,0}C):\exists\ v\in\CI(O;T^{1,0}M);\dbar
v=0,\ \phi_{p*}V(p)=v(\phi(p))\}
\label{MetResMod.328}\end{multline}
consisting of the vector fields which are $\phi$-related to a holomorphic
vector field on $O.$ As the null space of $\phi_*$
\begin{equation}
\CI(\phi^{-1}(O);L)\subset\cE(O)
\label{MetResMod.329}\end{equation}
with the quotient being the holomorphic vector fields on $O.$

\begin{lemma}\label{MetResMod.331} For a Lefschetz map the operator $\Lbar$
  extends to $\cE:$
\begin{equation}
\Lbar:\cE(O)\longrightarrow \{u\in\CI(\phi^{-1}(O);L\otimes \overline{L}^{-1});
u=0\Mat \bigcup_iS_i\cup\bigcup_l F_l\}.
\label{MetResMod.330}\end{equation}
\end{lemma}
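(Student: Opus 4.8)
The plan is to exploit the extension \eqref{MetResMod.329}: $\CI(\phi^{-1}(O);L)$ sits inside $\cE(O)$ with quotient the holomorphic vector fields on $O,$ and to show that the operator already built in Lemma~\ref{MetResMod.323} does not see this quotient. For $V\in\cE(O)$ I would choose, locally, a holomorphic lift $\hat v$ of the associated holomorphic field $v$ and split $V=V_0+\hat v$ with $V_0\in\CI(\phi^{-1}(O);L);$ such a lift exists near regular points and, near a double point, is furnished by the explicit coordinates \eqref{MetResMod.319}. I then set $\Lbar V:=\Lbar V_0.$ Well-definedness is the first thing to check: two holomorphic lifts differ by a holomorphic section of $L,$ on which the $\dbar$-operator $\Lbar$ vanishes, so $\Lbar V_0$ is independent of the splitting (and, being built from $\dbar,$ of the coordinates). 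By construction this reduces to the operator of Lemma~\ref{MetResMod.323} on $\CI(L),$ where $v=0.$

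The substance is the local computation near a double point $p\in S_i.$ In the admissible coordinates of \eqref{MetResMod.319} I would expand $V$ in the holomorphic log-frame, $V=Pz\pa_z+Qw\pa_w+\sum_rR_rz_r\pa_{z_r}+\sum_jS_j\pa_{\tau_j}.$ Because $\phi_*(z\pa_z)=\phi_*(w\pa_w)=t\pa_t$ by \eqref{MetResMod.319}, the condition that $V$ be $\phi$-related to the holomorphic field $v$ forces $P+Q,$ the $R_r$ and the $S_j$ to be pullbacks of holomorphic functions from the base, hence $\dbar$-closed; only the genuine fiber coefficient $\ha(P-Q)$ is a free smooth function. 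Applying $\dbar$ and using $\dbar(P+Q)=0,$ $\dbar R_r=\dbar S_j=0$ then collapses all the horizontal contributions and leaves
\begin{equation*}
\dbar V=\dbar P\otimes(z\pa_z-w\pa_w),
\end{equation*}
whose first factor already lies in $L.$ Restricting the antiholomorphic form factor to $\overline L$ recovers precisely the shape of \eqref{MetResMod.325}, now with the coefficient $\ha(\bar z\pa_{\bar z}-\bar w\pa_{\bar w})P.$

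This coefficient is smooth and carries a factor that vanishes on $\{z=w=0\}=S_i,$ and the corresponding log-computation at a fixed divisor $F_l$ (where $L$ is generated by $\zeta\pa_\zeta$ for a fiber coordinate $\zeta$ cutting out $F_l$) produces a coefficient $\bar\zeta\pa_{\bar\zeta}(\cdot)$ vanishing at $F_l.$ Hence $\Lbar V$ takes values in $L\otimes\overline L^{-1}$ and vanishes on $\bigcup_iS_i\cup\bigcup_lF_l,$ which is exactly the target space in \eqref{MetResMod.330}.

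I expect the one genuine obstacle to be the cancellation at the nodes. Away from $S_i$ the bundle $L$ is a coordinate direction and the whole point is the elementary fact that $\dbar$ annihilates the holomorphic horizontal part of $V.$ At a double point, however, the lift of the base log-direction $t\pa_t$ is $\ha(z\pa_z+w\pa_w),$ which is \emph{not} a section of $L;$ so one must verify that holomorphy of the \emph{sum} $P+Q$ (and not of $P,$ $Q$ separately) is what makes $\dbar P\otimes z\pa_z+\dbar Q\otimes w\pa_w$ land in $L.$ Confirming that this construction, together with the vanishing at $S_i\cup F_l,$ is coordinate-independent and patches to a global section over $\phi^{-1}(O)$ is the care-demanding part; the holomorphic dependence on base parameters noted after Lemma~\ref{9.3.2017.7} makes the patching routine.
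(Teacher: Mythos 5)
Your proposal is correct and takes essentially the same route as the paper: the paper's proof is exactly your splitting, writing $u=v+w$ locally with $v$ a holomorphic lift and $w\in\CI(U;L)$ as in \eqref{MetResMod.332}, defining $\Lbar u=\Lbar w,$ and observing this is independent of choices because two such decompositions differ by a holomorphic section of $L,$ with the mapping property into the target space of \eqref{MetResMod.330} then supplied by Lemma~\ref{MetResMod.323}. Your coordinate computation at the double points, which recovers \eqref{MetResMod.325} and verifies that the horizontal coefficients are $\dbar$-closed, fills in details the paper leaves implicit.
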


\begin{proof} On a coordinate patch $U\subset\phi^{-1}(O)\subset C,$ as in
  the definition of a Lefschetz map, an element $u\in\cE(O)$ restricts to
  be of the form
\begin{equation}
u=v+w,\ w\in\CI(U;L)
\label{MetResMod.332}\end{equation}
and defining $\Lbar u=\Lbar w$ is independent of choices.
\end{proof}

Now, the result alluded to above is

\begin{proposition}[Robbin--Salamon
    \cite{Robbin-Salamon}]\label{MetResMod.333} A Lefschetz map is
  universal (at the germ level) at a given fiber if and only if $\Lbar$ in
  \eqref{MetResMod.330} is an isomorphism.
\end{proposition}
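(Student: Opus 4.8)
The plan is to show that $\Lbar$ being an isomorphism in \eqref{MetResMod.330} is equivalent to the Kodaira--Spencer map of the family being bijective at the given fiber, and then to quote the infinitesimal criterion of Robbin and Salamon that this bijectivity is in turn equivalent to germ-level universality. The whole argument is thus a dictionary between the analytic operator $\Lbar$ and the deformation theory of pointed nodal curves.

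First I would exploit the two-step structure of $\cE$ recorded in \eqref{MetResMod.329}, namely the short exact sequence
\[
0\longrightarrow \CI(\phi^{-1}(O);L)\longrightarrow \cE(O)\longrightarrow \{\text{holomorphic vector fields on }O\}\longrightarrow 0,
\]
in which exactness on the right is the surjectivity of $\phi_*$ for a Lefschetz map. On the subsheaf $\CI(\phi^{-1}(O);L)$ the operator $\Lbar$ restricts to the fiber operator $\Lbar_0$ of \eqref{MetResMod.324}. By Riemann--Roch and the stability hypothesis, as already noted, $\Lbar_0$ is injective on each fiber, and its cokernel has dimension $3g_a-3+n$; I would identify this cokernel with $H^1$ of the null sheaf $L$ on the pointed nodal fiber, the space of first-order deformations. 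The vanishing conditions at $\bigcup_i S_i\cup\bigcup_l F_l$ imposed on the target in \eqref{MetResMod.330} are exactly what make this identification correct, reflecting the local form \eqref{MetResMod.325} of $\Lbar$ at the nodes together with the marking at the fixed divisors $F_l$.

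Next I would check that $\Lbar$ descends to the Kodaira--Spencer map $\kappa$. Given a germ of holomorphic vector field $v$ on the base, lift it to $V\in\cE(O)$ with $\phi_*V=v$; then $\Lbar V$ lies in the target and its class $[\Lbar V]$ in $\operatorname{coker}\Lbar_0$ is independent of the lift, since two lifts differ by $w\in\CI(\phi^{-1}(O);L)$ and $\Lbar V$ then changes by $\Lbar_0w\in\operatorname{im}\Lbar_0$. The resulting assignment $v\mapsto[\Lbar V]$ is the Kodaira--Spencer map of the family. A diagram chase applied to the exact sequence above then yields the key equivalence: since $\Lbar_0$ is injective, $\Lbar$ is injective on $\cE$ exactly when $\kappa$ is injective, and since $\operatorname{coker}\Lbar_0$ is exactly the target of $\kappa$, the operator $\Lbar$ is surjective onto the target in \eqref{MetResMod.330} exactly when $\kappa$ is surjective. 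Hence $\Lbar$ is an isomorphism if and only if $\kappa$ is an isomorphism, which in particular forces the base to have the moduli dimension $3g_a-3+n$.

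Finally I would invoke the theorem of Robbin and Salamon \cite{Robbin-Salamon}: a stable Lefschetz map is universal at the germ level at a fiber precisely when its Kodaira--Spencer map there is bijective; combined with the previous step this gives the Proposition. The main obstacle I expect is not the diagram chase but the bookkeeping at the nodes and marked points: one must verify that $\operatorname{coker}\Lbar_0$, computed with the vanishing conditions at the $S_i$ and $F_l$ and with the plumbing/log structure of $L$ near the nodes, genuinely realizes the full deformation space of the pointed nodal fiber --- including the node-smoothing directions carried by the Lefschetz variable $t=zw$ --- and not merely the equisingular deformations. Once this identification is secured, so that our $\Lbar$ matches the Kodaira--Spencer complex used in \cite{Robbin-Salamon}, the equivalence with universality is immediate.
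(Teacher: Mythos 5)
Your proposal is consistent with the paper, but the comparison is somewhat degenerate: the paper offers no argument at all for this proposition --- its proof reads, in full, ``See \cite{Robbin-Salamon}. A proof using techniques much closer to those used here can also be constructed.'' Both you and the paper therefore rest the substantive equivalence on Robbin--Salamon; what you add is the translation layer the paper leaves implicit, namely the exact sequence $0\longrightarrow \CI(\phi^{-1}(O);L)\longrightarrow \cE(O)\longrightarrow \{\text{holomorphic vector fields on }O\}\longrightarrow 0$ from \eqref{MetResMod.329}, the descent of $\Lbar$ to a Kodaira--Spencer map $\kappa$ on the quotient, and the diagram chase showing that $\Lbar$ in \eqref{MetResMod.330} is an isomorphism if and only if $\kappa$ is bijective. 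That chase is correct: injectivity and surjectivity transfer exactly as you say, given the injectivity of the fiber operator $\Lbar_0$ (which the paper derives from stability and Riemann--Roch). The step you flag as the main open obstacle --- that $\operatorname{coker}\Lbar_0$ realizes the \emph{full} deformation space of the marked nodal fiber, node-smoothing directions included, and not merely the equisingular deformations --- is indeed the crux, but it is settled by material already in the paper: Proposition~\ref{MetResMod.349} and Proposition~\ref{MetResMod.335} identify the annihilator of the range of the fiber operator with $\oQ(m)$ of \eqref{MetResMod.336}, the quadratic differentials with double poles at the nodes (with matching double residues) and at most simple poles at the marked points; these are dual to the full deformation space and have dimension $3g_a-3+n$, so the smoothing directions are present. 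One caution on your final step: in \cite{Robbin-Salamon} ``infinitesimal universality'' is itself \emph{defined} by an operator condition essentially identical to ``$\Lbar$ is an isomorphism,'' and their theorem is that universality is equivalent to that condition; so routing through the Kodaira--Spencer map does not let you bypass the bookkeeping you flagged --- matching $\operatorname{coker}\Lbar_0$ with their deformation space is precisely where the content of the translation lies, not an afterthought to it.
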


\begin{proof} See \cite{Robbin-Salamon}. A proof using techniques much
  closer to those used here can also be constructed.
\end{proof}

Note that Robbin and Salamon in \cite{Robbin-Salamon} proceed to construct
such germs of universal Lefschetz fibrations (as `unfoldings' of the
central fiber) and use these to (re-)construct the Knudsen-Deligne-Mumford
compactification.

\begin{proposition}\label{MetResMod.349} For each fiber of $\phi$, sections of the bundle
  $L\otimes\overline{L}^{-1}$ which vanish at the $F_l$ and $S_i$ may be
  paired with `quadratic differentials' to give a natural and
  non-degenerate complex pairing
\begin{multline}
\{u\in\CI(\phi^{-1}(O);L\otimes\overline{L}^{-1})\big|_{\phi^{-1}(m)};
u=0\Mat \bigcup_iS_i\Mand\bigcup_l F_l\}\\
\times\{q\in\CI(\phi^{-1}(O);L^{-2})\big|_{\phi^{-1}(m)};q=0\Mat\bigcup_lF_l\}\\
\ni (u,q)\longmapsto\int_{\fib}(u\cdot q)\in\bbC.
\label{MetResMod.334}\end{multline}
\end{proposition}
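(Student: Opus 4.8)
The plan is to read $u\cdot q$ as an intrinsic fibre density and then dispatch the three assertions — well-definedness, naturality and non-degeneracy — separately, the only genuinely technical point being absolute convergence at the nodes.

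First I would record that $L$ restricts on the smooth part of each fibre to the holomorphic tangent line, so that $L^{-1}=\Lambda^{1,0}_{\fib}$ and $\overline L^{-1}=\Lambda^{0,1}_{\fib}$. The natural contraction of the $L$-factor of $u\in L\otimes\overline L^{-1}$ against one $L^{-1}$-factor of $q\in L^{-2}=L^{-1}\otimes L^{-1}$ is then a bundle map
\[
L\otimes\overline L^{-1}\otimes L^{-2}\longrightarrow L^{-1}\otimes\overline L^{-1}=\Lambda^{1,0}_{\fib}\otimes\Lambda^{0,1}_{\fib}=\Lambda^{1,1}_{\fib},
\]
so $u\cdot q$ is, pointwise and canonically, a fibre $(1,1)$-form, i.e.\ an area density on the (real two-dimensional) fibre. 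Coordinate-independence of $\int_{\fib}u\cdot q$ — the naturality — is then automatic, and the whole question of well-definedness reduces to integrability near the finitely many $F_l$ and $S_i$.

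The hard part will be this convergence. Working in the admissible coordinates of Definition~\ref{MetResMod.266}: near a marked point $F_l=\{z=0\}$ the bundle $L$ is spanned by $z\partial_z$, so $L^{-1}\otimes\overline L^{-1}$ is framed by $\frac{dz}{z}\otimes\frac{d\bar z}{\bar z}=\frac{dz\,d\bar z}{|z|^2}$, proportional to $r^{-1}dr\,d\theta$ in polar coordinates and hence not integrable at $r=0$; the hypotheses $u=0$ and $q=0$ at $F_l$ each contribute a factor $O(|z|)$, rendering the integrand $O(r\,dr\,d\theta)$. Near a node the map is $zw=t$ and $L$ is spanned by $z\partial_z-w\partial_w$, restricting to $z\partial_z$ on the branch $\{w=0\}$; there $q=\psi(dz/z)^2$ may carry a genuine double pole ($\psi$ bounded), while $u=\nu\,(z\partial_z)\otimes\frac{d\bar z}{\bar z}$ has $\nu$ smooth and vanishing at the node by the hypothesis on $u$, hence $\nu=O(|z|)$. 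The contraction gives $u\cdot q=c\,\nu\psi\,\frac{dz\,d\bar z}{|z|^2}$, which $\nu=O(|z|)$ turns into a bounded density $O(dr\,d\theta)$; the branch $\{z=0\}$ is identical, and since the node is a single point the integral over the nodal fibre is the sum of the two convergent branch integrals. This precise calibration — the first-order vanishing of $u$ at $S_i$ exactly defeating the double pole of $q$ against the cusp degeneration of the metric — is what I expect to be the main obstacle to state cleanly.

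Finally, for non-degeneracy I would observe that the contraction is fibrewise non-degenerate: off the $S_i$, $L_p$ is one-dimensional and $(u_p,q_p)\mapsto u_p\cdot q_p$ is the product of two scalars, vanishing only if $u_p=0$ or $q_p=0$. Given $u\neq0$ I would pick $p$ in the open set $\{u\neq0\}$ away from the finitely many $F_l,S_i$, write $u=\alpha\,(e\otimes\bar e^{*})$ in a trivializing chart, and take $q=\chi\,\bar\alpha\,(e^{*})^{2}$ with $\chi\geq0$ a bump supported near $p$; then $u\cdot q=c\,|\alpha|^2\chi$ times the area form, so $\int_{\fib}u\cdot q\neq0$, and $q$ is a smooth section of $L^{-2}$ automatically vanishing at the $F_l$. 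The reverse direction — producing for each $q\neq0$ a compactly supported $u$ vanishing at $S_i,F_l$ — is identical. This is just the fundamental lemma of the calculus of variations applied to the non-degenerate fibrewise pairing. I would close with the remark that, on restricting $q$ to holomorphic differentials and $u$ to $\Lbar(\cE)$, Stokes' theorem together with $\dbar q=0$ and the vanishing of $u\cdot q$ at the singular loci makes the pairing descend to $\operatorname{coker}\Lbar$, giving, via the Riemann--Roch count $3g_a-3+n$, the perfect Serre-duality pairing underlying Proposition~\ref{MetResMod.333}.
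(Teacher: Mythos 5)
Your proposal is correct and follows essentially the same route as the paper's proof: you interpret $u\cdot q$ as a fiber $(1,1)$-density via contraction and verify integrability in admissible polar coordinates, getting exactly the paper's estimates --- $O(r)\,dr\,d\theta$ at the fixed divisors from the double vanishing of $u'q'$, and a bounded density $O(dr\,d\theta)$ at the nodes from the simple vanishing of $u$ against the double pole of $q$. The only divergences are cosmetic: you spell out the non-degeneracy (which the paper dismisses as ``clear'') by a bump-function construction, and you omit the paper's side remark that the leading boundary terms of the two branch integrals at a node cancel, which is not needed for the proposition itself.
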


\begin{proof} At regular fibers, away from the $F_l,$ the sections are of
  the form $u=u' \pa_z\cdot\overline{dz}$ and $q=q'dz^2$ with $u'$ and $q'$
  smooth. The complex pairing of $\pa_z$ and $dz$ allows this to be
  interpreted as a local area form 
\begin{equation}
u'q'\overline{dz}\cdot dz.
\label{MetResMod.350}\end{equation}

Near a fixed divisor with $z$ and admissible fiber coordinate $u=u'
z\pa_z\cdot{d\bar z/\bar z}$ and $q=q'(dz/z)^2$ where by hypothesis $u'$
vanishes at $z=0$ as does $q'.$ Working in polar coordinates
$z=re^{i\theta}$ the area form in \eqref{MetResMod.350} becomes
\begin{equation}
u'q'{\frac{d\bar z}{\bar z}}\cdot \frac{dz}z=u'q'\frac{dr}rd\theta.
\label{MetResMod.351}\end{equation}
Since both $u'$ and $q'$ vanish at $r=0$ this is a multiple of the standard
area form in polar coordinates $rdrd\theta$ and so the integral pairing
extends across these divisors.

On the singular fibers the same discussion applies near the fixed
divisors. Near a nodal point, in some $S_i,$ there are Lefschetz coordinates
$z$ and $w.$ By assumption
$u=u'(z\pa_z-w\pa_w)\cdot(\frac{d\bar z}{\bar z}+\frac{d\bar w}{\bar w})$ with $u'$
  vanishing at $z=0$ or $w=0$ on the two intersecting parts of the
  fiber. Similarly  
\begin{equation}
q=q'(\frac{dz}z+\frac{dw}w)^2
\label{MetResMod.352}\end{equation}
where the compatibility condition on $q'$ is that it have a well-defined
value at $z=w=0,$ approached continuously from both local parts of the
fiber. Thus the local area form given by the pairing becomes
\begin{equation}
u'q'\frac{dr}rd\theta
\label{MetResMod.353}\end{equation}
which is bounded by $Cdrd\theta$ locally. The resulting pairing integral is
finite, but more significantly the leading terms in the integral at $w=0$
and $z=0$ cancel.

That the resulting pairing is non-degenerate on the smooth sections is then
clear.
\end{proof}

Although somewhat ad hoc in appearance this pairing actually corresponds to
a natural distributional pairing on real resolved spaces.

\begin{proposition}\label{MetResMod.335} In terms of the pairing
  \eqref{MetResMod.334}, the range of $\Lbar$ in \eqref{MetResMod.324} is naturally
  identified, over each fiber with the annihilator of 
\begin{equation}
\oQ(m)=\{\zeta\in\CI(\pi^{-1}(m);L^{-2});\zeta=0\Mat\bigcup_lF_l\Mand\dbar\zeta=0\}.
\label{MetResMod.336}\end{equation}
\end{proposition}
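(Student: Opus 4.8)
The plan is to prove the identification through two inclusions, writing $V(m)$ for the first factor in the pairing \eqref{MetResMod.334} and recalling from \eqref{MetResMod.336} that $\oQ(m)$ consists of the holomorphic sections of $L^{-2}$ over the fibre that vanish at the fixed divisors. First I would verify that the fibrewise range of $\Lbar$ from \eqref{MetResMod.324} annihilates $\oQ(m),$ which is a direct integration by parts; then I would show that \eqref{MetResMod.334} induces a \emph{perfect} pairing between the cokernel of $\Lbar$ on the fibre and $\oQ(m),$ forcing the annihilator of $\oQ(m)$ to coincide with the range. This second step is a fibrewise Serre duality and carries the real content.

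For the forward inclusion, take $\sigma\in\CI(C;L)$ and $q\in\oQ(m).$ The product $\sigma\cdot q$ is a section of $L\otimes L^{-2}=L^{-1},$ i.e.\ a $(1,0)$-form along the fibre, and since $\dbar q=0$ the Leibniz rule gives $(\Lbar\sigma)\cdot q=\dbar(\sigma\cdot q).$ On a closed fibre $\int_{\fib}\dbar(\sigma\cdot q)=\int_{\fib}d(\sigma\cdot q)=0$ by Stokes, the $(2,0)$-component of $d$ vanishing on a complex curve. At a fixed divisor both $\sigma$ and $q$ vanish, so nothing is contributed there; at a node the cancellation of the two leading terms recorded in the proof of Proposition~\ref{MetResMod.349} is precisely what guarantees that the integrated total derivative has no residue across $S_i.$ Hence $\int_{\fib}(\Lbar\sigma)\cdot q=0$ and the range of $\Lbar$ lies in the annihilator of $\oQ(m).$

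For the reverse inclusion I would recognise \eqref{MetResMod.334} as the Serre pairing. On the regular part of the fibre $\Lbar$ is the $\dbar$-operator on $L\simeq T^{1,0},$ whose Dolbeault cohomology is $H^1(\cdot,L);$ since $L^{-1}$ is the canonical bundle of the fibre, Serre duality yields a perfect pairing of $H^1(\cdot,L)$ with $H^0(\cdot,L^{-2})$ given by exactly the integral in \eqref{MetResMod.334}. The defining conditions on the two factors are dual to one another: the vanishing of $u$ at the $F_l$ and $S_i$ is Serre-dual to the admission of at most simple poles at marked points and double poles with matching double residues at nodes, that is, to membership in $\oQ(m).$ Since $\Lbar$ is injective on each fibre and its cokernel has dimension $3g_a-3+n=\dim\oQ(m),$ the induced pairing between the cokernel of $\Lbar$ and $\oQ(m)$ is nondegenerate, and combined with the forward inclusion this identifies the range of $\Lbar$ with the annihilator of $\oQ(m).$

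The step I expect to be the main obstacle is the nodal case of this Serre duality: one must run the duality on a singular fibre and check that the analytic side-conditions built into $V(m)$ and $\oQ(m)$ really are dual. The cleanest route is to pass to the normalisation, where each node becomes a pair of marked points; there ordinary Serre duality applies to $T^{1,0}$ and $L^{-2}$ twisted by the marked points, and one must verify that the single vanishing condition imposed on $u$ along the neck corresponds exactly to the matching of the two double residues of $q$ at the separated branches. This compatibility of the neck conditions under the duality is the point requiring genuine care; the remaining bookkeeping with the fixed divisors is routine.
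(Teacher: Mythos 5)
Your proposal is correct in outline, and its skeleton --- first show the range of $\Lbar$ annihilates $\oQ(m),$ then show the induced pairing between the cokernel and $\oQ(m)$ is nondegenerate, then conclude by linear algebra --- is the same as the paper's; but your key input for the nondegeneracy is genuinely different. The paper never invokes Serre duality: it notes that the range is closed with finite-dimensional complement (by comparing $\Lbar$ with the standard $\dbar$-operator on the disjoint union of component Riemann surfaces), that the restriction of the pairing \eqref{MetResMod.334} to a complement of the range and a suitable finite-dimensional space of \emph{smooth} sections of $L^{-2}$ is nondegenerate --- this is exactly what Proposition~\ref{MetResMod.349} supplies --- and that $\dim\oQ(m)=3g_a-3+n$ equals the codimension of the range by a second Riemann--Roch count; these three facts force $\oQ(m)$ to be the full annihilator of the range and vice versa. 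Your Stokes computation of the forward inclusion, with the residue cancellation at the nodes coming from the matched double residues, is a correct and more explicit version of what the paper merely asserts (one small imprecision: at a fixed divisor it is the vanishing of $q$ as a section of $L^{-2}$ that kills the boundary term; $\sigma$ itself need not vanish as a section of $L$). The trade-off is that the step you defer is precisely where your route is harder than the paper's: ordinary Serre duality on the normalization, for $T^{1,0}$ twisted down by the marked points and the separated nodal points, pairs $H^1$ only with quadratic differentials having at most \emph{simple} poles at the separated nodes, so the elements of $\oQ(m)$ with nonzero matched double residues --- one dimension per node --- are invisible to that statement. Those extra dimensions are dual to the fact that vanishing of $u$ at a node is a single coupled condition on the two branches rather than two independent ones, and making this correspondence precise amounts to the dualizing-sheaf form of duality on the nodal curve. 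That is exactly the work the paper's argument, nondegeneracy of the smooth pairing plus two dimension counts, is designed to sidestep; your approach buys a more structural, self-contained identification of the cokernel, at the cost of having to prove that singular-fiber duality theorem rather than quote soft functional analysis.
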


\begin{proof} The range of $\Lbar$ in \eqref{MetResMod.325} is closed with
  finite-dimensional complement for each fiber since the domain differs
  from the standard domain on the disjoint union of the Riemann surfaces,
  into which each fiber decomposed, by a finite-dimensional space. The
  restriction of the pairing to a finite dimensional complement and to an
  appropriate finite-dimensional subspace of the space of quadratic
  differentials in \eqref{MetResMod.334} is therefore non-degenerate. On
  the other hand the space of holomorphic quadratic differentials, in this
  sense, pairs to zero with the range of $\Lbar$ and has the same
  dimension as the complementary space so is indeed the annihilator of the
  range.
\end{proof}

Note there is a certain inconsistency in notation regarding the extension of $Q$ to
the compactification by $\oQ$, since following the convention of denoting
extensions to the compactification by a `bar' would conflict with the notation for
complex conjugation.

That $\oQ$ is a holomorphic bundle over $M$ is again a consequence
of stability, that its rank is constant. This follows from
algebraic-geometric arguments or from the known holomorphy of the log
cotangent bundle to $\oMs{g,n}$ and the pointwise identification of it with
$\oQ.$

Now the extended operator $\Lbar$ in \eqref{MetResMod.330} defines a bundle
map 
\begin{equation}
\DT^{1,0}M\longrightarrow (\oQ)',\ \DL^{1,0}M\longrightarrow \oQ
\label{MetResMod.337}\end{equation}
and Proposition~\ref{MetResMod.333} of Robbin-Salamon asserts that
this is an isomorphism, precisely when the Lefschetz map is universal at a
germ level. This is the identification $\oq$ of \eqref{MetResMod.307} in the case of
$\oMs{g,n}.$

\section{Real and metric resolutions}\label{Real-resolution}

Although regular, or at least minimally singular, in themselves (in
particular `flat'), it is necessary for analytic purposes to resolve the
Lefschetz fibrations considered above. The construction here is a direct
generalization of the constructions in \cite{MetLef}. Although it is not
discussed here, there is a simpler (non-logarithmic) resolution which is
appropriate for the analysis of the fiber $\Lbar.$

The first step in the resolution is to blow up, in the real sense, the
divisors, in both domain and range. Since any intersections or
self-intersections are normal, i.e.\ transversal, there is no ambiguity in
terms of the order chosen in doing this. Since we insist, as a matter of
definition or notation, that the boundary hypersurfaces of a manifold with
corners be embedded, and this need not be the case here, the result may
only be a \emph{tied manifold} -- a smooth manifold locally modelled on the
products $[0,\infty)^k\times(-\infty,\infty)^{n-k}$ but with possibly
non-embedded boundary hypersurfaces.

\begin{definition}\label{MetResMod.270} If $(M,G_*)$ is a complex manifold
  with normally intersecting divisors, then the real resolution 
\begin{equation}
\Mt=[M;G_*]_{\bbR}
\label{MetResMod.271}\end{equation}
is a tied manifold (so with corners).
\end{definition}

Locally the complex variables $z_l$ defining the divisors lift to
$r_le^{i\theta_l}$ in the real blow-up so the boundaries carry circle
fibrations. The logarithmic tangent and cotangent bundles lift to be
canonically isomorphic to the corresponding b-tangent and cotangent
bundles, which therefore carry induced complex structures; namely 
\begin{equation}
z\pa_z\text{ lifts to }r\pa_r+i\pa_{\theta}.
\label{MetResMod.295}\end{equation}

This definition can be applied to both the domain and the range for
the `plumbing variety' model for a Lefschetz map
\eqref{MetResMod.282}. Real blow-up of the two divisors $z=0$ and $w=0,$ 
introduces polar coordinates $z=r_1e^{i\theta_1}$ and $w=r_2e^{i\theta_2}$
and similarly blow up of $t=0$ introduces $t=re^{i\theta}$ in the range space.
The local Lefschetz maps then lifts to be a fibration in the angular
variables with a simple b-fibration condition satisfied in the radial variables
\begin{equation}
\begin{gathered}
\tphi(r_1,r_2,\theta_1,\theta_2)=(r,\theta)\\
r=r_1r_2,\ \theta=\theta_1+\theta_2.
\end{gathered}
\label{MetResMod.294}\end{equation}
Thus the lifted map is indeed a b-fibration. Globalizing this statement gives:

\begin{proposition}\label{MetResMod.343} A Lefschetz map between complex
  manifolds with divisors lifts to a b-fibration 
\begin{equation}
\xymatrix{
\tC\ar[d]_{{\beta}_{\bbR}}\ar[r]^{\tphi}&\Mt\ar[d]^{{\beta}_{\bbR}}\\
C\ar[r]^{\phi}&M
}
\label{MetResMod.272}\end{equation}
which is `simple' in the sense that it is the real analog of a Lefschetz
map and each boundary defining function in the base lifts, locally, to
be a product of at most two factors as in \eqref{MetResMod.294}.
\end{proposition}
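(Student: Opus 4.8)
The plan is to verify the b-fibration property entirely through local models, exploiting the fact that real blow-up and the defining conditions for a b-fibration are local and that all (self-)intersections of the divisors are normal. First I would observe that the lift $\tphi$ exists and is smooth: since $\phi$ maps $G_i$ onto $G'_i$ and $F_l$ onto $M$, and pulls back the local defining functions as recorded in \eqref{MetResMod.317}--\eqref{MetResMod.320}, the defining ideal of each base divisor pulls back to a product of defining ideals of the corresponding domain divisors. By the universal property of the real (polar-coordinate) blow-up, $\phi$ therefore lifts to a smooth map $\tphi:\tC\to\Mt$ of the tied manifolds. It then suffices to examine $\tphi$ in the four types of chart provided by Definition~\ref{MetResMod.266}: (a) regular points, where $\phi$ is an honest submersion and there is nothing to blow up; (b) a fixed divisor $F_l$, where $z=r e^{i\theta}$ is blown up in the domain while the image in $M$ is unblown, so the boundary face maps into the interior of $\Mt$; (c) a divisor $G_i$ away from $S_i$, where $\phi^*(dz'/z')=dz/z$ forces the single-factor form $r'=r,\ \theta'=\theta$; and (d) a double point $p\in S_i$, where \eqref{MetResMod.319} and \eqref{MetResMod.295} yield exactly the plumbing model \eqref{MetResMod.294}.

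Next I would check the b-fibration conditions in each chart. Using \eqref{MetResMod.295}, the b-tangent bundle $\bT\tC$ is locally spanned by the fields $r_j\pa_{r_j}$ and $\pa_{\theta_j}$ together with the tangential fields $\pa_{\tau}$, and similarly for $\bT\Mt$. In the plumbing chart (d) the b-differential sends $r_1\pa_{r_1}\mapsto r\pa_r$, $r_2\pa_{r_2}\mapsto r\pa_r$, $\pa_{\theta_1}\mapsto\pa_{\theta}$ and $\pa_{\theta_2}\mapsto\pa_{\theta}$, which is surjective, so $\tphi$ is a b-submersion; charts (a)--(c) are easier, the differential being plainly onto. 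For b-normality I would note that near $\phi(S_i)$ the only base divisor is $G'_i=\{t=0\}$, since the $S_i$ are mutually disjoint and the coordinates in \eqref{MetResMod.319} exhibit no other divisor through the image; hence, although the two domain hypersurfaces $\{r_1=0\}$ and $\{r_2=0\}$ both map into $\{r=0\}$, no single domain hypersurface maps onto a corner of $\Mt$, and b-normality holds. Properness of $\tphi$ follows from compactness of the fibers (condition (2) of Definition~\ref{MetResMod.266}) together with properness of the blow-down maps. A proper b-submersion that is b-normal is a b-fibration.

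The \emph{simple} assertion is then read directly off the local forms: the only place where a base boundary defining function pulls back to a genuine product is the plumbing chart (d), where $\tphi^* r=r_1 r_2$ is a product of exactly two domain boundary defining functions, while in charts (b) and (c) it pulls back to a single factor or to a positive unit. This is precisely the real analog of the Lefschetz normal form \eqref{MetResMod.282}, establishing the simplicity claim.

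The step I expect to be the main obstacle is the clean verification that the lift $\tphi$ is genuinely smooth across the exceptional boundaries, and in particular the accompanying bookkeeping for b-normality. At a double point one must confirm that the formulae $r=r_1 r_2$, $\theta=\theta_1+\theta_2$ are compatible with the global blow-down and with the possibility that $G_i$ is only immersed rather than embedded, so that $\tC$ and $\Mt$ are \emph{tied} rather than ordinary manifolds with corners. One must also track b-normality where a divisor approaches a common corner with the fixed divisors. This is controlled by the admissibility of coordinates and the disjointness hypotheses built into Definition~\ref{MetResMod.266}, but it is the point at which the argument must be carried out carefully rather than dispatched by a single-chart computation.
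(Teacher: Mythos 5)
Your overall strategy coincides with the paper's: the paper's proof consists entirely of the local computation \eqref{MetResMod.294} in the plumbing chart --- after real blow-up, $t=zw$ becomes $r=r_1r_2$, $\theta=\theta_1+\theta_2$ --- followed by the remark that globalizing this gives the proposition. Your chart-by-chart verification of smoothness of the lift, the b-submersion property, b-normality, properness and the two-factor bound just makes that globalization explicit, which is fine.

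However, one step of your b-normality argument is wrong as stated. You claim that near $\phi(S_i)$ the only base divisor is $G'_i$, ``since the $S_i$ are mutually disjoint and the coordinates in \eqref{MetResMod.319} exhibit no other divisor through the image.'' Both halves of this are incorrect: Definition~\ref{MetResMod.266}(7) explicitly allows further divisors through the image of $p$, defined by the coordinates $z'_r$ with $\phi^*z'_r=z_r$, and disjointness of the $S_i$ inside $C$ does not prevent the image point from lying on several $G'_j$ simultaneously --- that is exactly what happens at a multi-node degeneration, where two double points $p_1\in S_1$ and $p_2\in S_2$ lie in the same singular fiber (so are disjoint in $C$) but map to the same point of $G'_1\cap G'_2$; this multi-divisor situation is the case of principal interest in the paper. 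The conclusion you want does survive: by \eqref{MetResMod.319} the extra base defining functions pull back as single factors, $\tphi^*r_{z'_r}=r_{z_r}$, so on the hypersurface $\{r_z=0\}$ the function $r_{z_r}$ remains unconstrained and the image is all of $\{r_t=0\}$ rather than a subset of the corner $\{r_t=0\}\cap\{r_{z'_r}=0\}$, whence b-normality. But the justification must run through the full normal form of \eqref{MetResMod.319}, corners included, not through the (false) absence of other divisors near the image. The same remark applies to your chart list (a)--(d), which omits points of $G_i\cap G_j$ and of $F_l\cap G_i$; these are handled by admissible coordinates and single-factor pullbacks, as you partially acknowledge in your closing paragraph, and should be folded into the case analysis rather than left as a caveat.
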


However, to resolve the metric we need to go further.

\begin{definition}\label{MetResMod.274} The metric resolution of a
  Lefschetz fibration is defined from the real resolution in
  Definition~\ref{MetResMod.270} by 
\begin{enumerate}
\item Logarithmic resolution of the boundary hypersurfaces in both domain
  and range, i.e.\ by introduction of the function $\ilog\rho=1/\log(1/\rho)$
  in place of each (local) boundary defining function $\rho$ . This results
  in tied manifolds in domain and range mapping smoothly, and
  homeomorphically, to the resolutions in Definition~\ref{MetResMod.270};
  the resolved range space is denote $\hM.$
\item Further radial blow-up, in the domain, of the preimages after the
  first step of the boundary faces of codimension-two in
  Definition~\ref{MetResMod.272} resulting in the tied manifold with
  corners manifolds $\hC.$
\end{enumerate}
\end{definition}

\noindent To illustrate the resolution, we give the example when there is one nodal intersection explicitly in terms of local coordinates. Locally the fibration is given by the following plumbing variety
$$
\begin{gathered}
P=\{(z,w)\in\bbC^2;\ \exists\ t\in\bbC, \ zw=t,\ |z|\le\tq,\ |w|\le\tq,\ |t|\le\ha\},\\
P\overset{\phi}\longrightarrow\bbD_{\ha}=\{t\in\bbC;|t|\le\ha\}.
\end{gathered}
$$
The real resolution introduces polar coordinates 
$$
z=r_{z}e^{i\theta_{z}}, \ w=r_{w}e^{i\theta_{w}}, \ t=r_{t}e^{i\theta_{t}}.
$$
Step (1) above, the logarithmic resolution, introduces variables $\ilog\rho=1/\log(1/\rho)$ for the radial variables
$$
s_{z}=\ilog r_{z}, \ s_{w}=\ilog r_{w}, \ s=\ilog r_{t}.
$$ 
Step (2), the radial blow-up, resolves the singularity in
$s=\frac{s_{z}s_{w}}{s_{z}+s_{w}}$ by introducing polar coordinates for
$(s_{z},s_{w}):$
$$
(s_{z},s_{w})=(R R_{z}, R R_{w}), \ R=\sqrt{s_{z}^{2}+s_{w}^{2}}.
$$
Figure~\ref{MetResMod.419} indicate the resolution of the domain space. The first
step illustrates the blow-up of complex divisors, each becoming a boundary
hypersurface (with circle bundle). The second `trivial' step on the left
corresponds to the replacement of the radial defining functions by their
(doubly inverted) logarithms. The radial blow up amounts to the
introduction of polar coordinates around the corners which correspond to
the nodal surfaces.

\begin{figure}[htp]
    \centering
    \includegraphics[width = 0.6\textwidth]{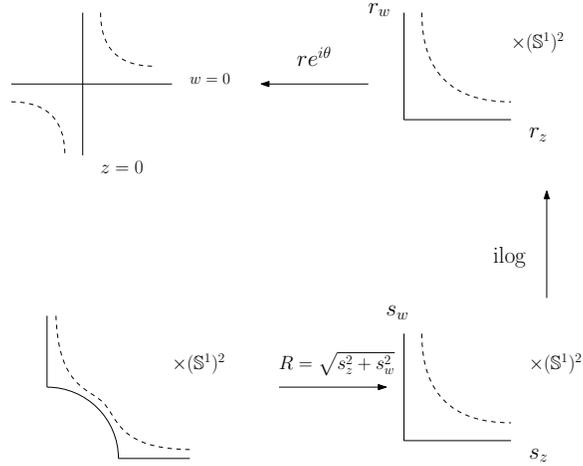}
    \caption{Local form of the metric resolution}
 \label{MetResMod.419} \end{figure}

In the case of a multi-Lefschetz fibration, the local coordinate description
(with more parameters) is the same but in various factors. Performing the
two operations (1) and (2) in the opposite order is by no means
equivalent. Although we use the same notation for the resolutions in domain
and range note that an extra step is involved in the domain, where the
codimension two intersections of the divisors corresponding to the nodal
surfaces are resolved to become boundary hypersurfaces. It is for this
reason that we distinguish notationally between $\Cs{g,n}$ and
$\Ms{g,n+1},$ which are the same space, but their real resolutions,
$\hCs{g,n}$ and $\hMs{g,n+1},$ are different -- because the former is
viewed as the domain of a Lefschetz map while the latter is the range (of a
different map); so it is the Lefschetz map itself which is resolved by this
construction.

\begin{proposition}\label{MetResMod.344} After the `metric resolution' of
  domain and range a Lefschetz map $\phi:C\longrightarrow M$ lifts to a b-fibration  
\begin{equation}
\hphi:\hC\longrightarrow \hM.
\label{MetResMod.296}\end{equation}
\end{proposition}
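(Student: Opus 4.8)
The plan is to prove the b-fibration property by reduction to the local plumbing model, since the b-fibration conditions are local in the base and domain and the only nontrivial structure of a Lefschetz map occurs near the singular surfaces $S_i$. Away from $\bigcup_i S_i$ the map $\phi$ is a holomorphic submersion carrying $G_i$ onto $G_i'$, so after real blow-up Proposition~\ref{MetResMod.343} already gives a (simple) b-fibration, and the logarithmic resolution of Step (1) of Definition~\ref{MetResMod.274} is, on these regions, merely a boundary-preserving reparametrization $r\mapsto s=\ilog r$ applied compatibly in domain and range; it manifestly preserves the b-fibration property. The fixed divisors $F_l$ contribute only \emph{fiber boundaries}: near $F_l$ one has a fiber coordinate $z$ and base coordinate $\tau$ with $\phi(z,\tau)=\tau$, so the lifted face maps onto the base with every base-boundary exponent zero and $\hphi$ restricted there is a submersion. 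Thus the whole argument concentrates on a neighbourhood of a single nodal point $p\in S_i$, where by disjointness of the $S_i$ the global map is locally a product of one such model with regular and fixed-divisor factors.

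In that neighbourhood I would use the explicit coordinates of the worked example following Definition~\ref{MetResMod.274}. The real resolution gives $z=r_ze^{i\theta_z}$, $w=r_we^{i\theta_w}$, $t=re^{i\theta}$ with the simple relations $r=r_zr_w$, $\theta=\theta_z+\theta_w$ of \eqref{MetResMod.294}. Passing to logarithmic variables $s_z=\ilog r_z$, $s_w=\ilog r_w$, $s=\ilog r$ turns the multiplicative relation $r=r_zr_w$ into $1/s=1/s_z+1/s_w$, that is
\[
s=\frac{s_zs_w}{s_z+s_w}.
\]
This function is homogeneous of degree one in $(s_z,s_w)$ but is \emph{not} smooth at the corner $s_z=s_w=0$; this non-smoothness is exactly the obstruction that forces the extra radial blow-up of Step (2), performed only in the domain. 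Introducing the polar coordinates $(s_z,s_w)=(RR_z,RR_w)$ with $R_z^2+R_w^2=1$, $R_z,R_w\ge 0$, gives
\[
s=R\,\frac{R_zR_w}{R_z+R_w}=R\,R_z\,R_w\cdot a,\qquad a=\frac{1}{R_z+R_w}.
\]
Since $R_z+R_w\ge 1$ on the quarter-circle, $a$ is smooth and strictly positive, so $s$ pulls back to the product of the boundary defining functions $R$ (the type II front face), $R_z$ and $R_w$ (the lifted type I faces) times a positive smooth factor. This is precisely the b-map condition, and it shows that the apparent singularity has been resolved.

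It then remains to verify the two differential conditions. For \textbf{b-normality} I observe that in the single-node model there is a single boundary hypersurface downstairs, $\{s=0\}$, so the exponent matrix has one column and every row trivially has at most one nonzero entry; since distinct $S_i$ are disjoint and the fixed divisors contribute only zero rows, the local product structure propagates b-normality globally. For the \textbf{b-submersion} property I would compute the b-differential on generators near the corner $R=R_z=0$: writing $\log s=\log R+\log R_z+\log a$ and noting that $a$ depends only on the angular variable, one finds $R\partial_R\mapsto s\partial_s$ and $R_z\partial_{R_z}\mapsto(1+O(R_z))\,s\partial_s$, while $\partial_{\theta_z},\partial_{\theta_w}\mapsto\partial_\theta$ from $\theta=\theta_z+\theta_w$, and the parameter fields map to their counterparts. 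The image therefore spans $s\partial_s$, $\partial_\theta$ and the base parameter directions, so the b-differential is surjective; the interior and fixed-divisor regions are submersions by the first paragraph. Surjectivity of the b-differential together with b-normality gives that $\hphi$ is a b-fibration.

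The hard part will be the bookkeeping at Step (2): confirming that the radial blow-up carried out only in the domain exactly matches the non-smooth homogeneity of $s=s_zs_w/(s_z+s_w)$, treating the symmetric corner $R_w=0$ and the interior of the front face by the same computation, and then making precise the reduction that in a genuine multi-Lefschetz fibration the global map is, near each boundary point, a product of the single-node models above with trivial submersive factors, so that the local estimates do not interact.
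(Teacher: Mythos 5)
Your proof is correct and follows essentially the same route as the paper: reduce to the local plumbing model, observe that $\hphi^*s=s_zs_w/(s_z+s_w)$ fails to be smooth exactly at the corner, and check that the radial blow-up of Step (2) turns this pull-back into a product of boundary defining functions (the paper uses the projective variables $R=s_z+s_w$, $w_1=s_z/R$, $w_2=s_w/R$, giving exactly $\hphi^*s=Rw_1w_2$, whereas your Euclidean polar coordinates give the same conclusion up to the smooth positive factor $1/(R_z+R_w)$). Your explicit verification of b-normality and surjectivity of the b-differential, and the remark that only two of the three factors can vanish simultaneously, merely spell out what the paper's proof leaves implicit.
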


\begin{proof} As noted in Proposition~\ref{MetResMod.343}, after
  the initial real resolution the lift of $\phi$ is smooth and a
  b-fibration. However, after the logarithmic step in
  Definition~\ref{MetResMod.274} regularity (and indeed existence) fails 
  precisely in the new normal variables, the defining functions
  $\ilog\rho.$ Tangential regularity is unaffected and there is no issue
  where the map is a fibration, but from the local form
  \eqref{MetResMod.294} away from the new boundaries,
\begin{equation}
\phi^*\ilog r=\frac{\ilog r_1\ilog r_2}{\ilog r_1+\ilog r_2}
\label{MetResMod.297}\end{equation}
and the right side is not smooth. However, the second blow-up introduces
the radial variable $R=\ilog r_1+\ilog r_2$ as defining function for the new
hypersurface and makes the `angular functions' $w_1=\ilog r_1/R$ and $w_2=\ilog
r_2/R$ smooth local boundary defining functions, so then
\begin{equation}
\hphi^*\ilog r=Rw_1w_2
\label{MetResMod.298}\end{equation}
is indeed smooth and shows the resulting map to be a b-fibration. Although
the lifted boundary defining function is the product of three boundary
defining functions only two of these can vanish simultaneously.
\end{proof}

After the metric resolution, the domain space $\hC$ has three types of
boundary hypersurfaces. The fixed hypersurfaces denoted $F_*,$ which
correspond to the marked points. These do not have self-intersections nor
do they intersect among themselves. The second class of boundary
hypersurfaces are the lifts (proper transforms) of the original Lefschetz
divisors, we denote them $H_{\I,*}.$ The third class, $H_{\II,*}$ arise
from the final blow up of the codimension two surfaces formed by the
singular points of the Lefschetz map. These last two classes fiber under
$\hphi.$ The fibers of the $H_{I,*}$ consist of circle bundles over the
component Riemann surfaces of singular fibers of $\phi$ with the marked
points blown up in the base of the circle fibrations (forming the
intersections with the $F_*)$ and the nodal points similarly blown up (and
separated) forming the intersections with the $H_{\II,*}.$ Thus these
fibers are all circle bundles over Riemann surfaces with (resolved) cusp
boundaries. The fibers of the $H_{\II,*}$ cylinders each carry torus
bundles, linking the boundary curves of the fibers of $H_{I,*}$
corresponding to the nodes.

The boundaries of the resolved base $\hM$ also carry circle bundles, over
which the boundary faces above fiber. From Proposition~\ref{MetResMod.411},
the normal bundle of any $G_{i}'$ is trivial, hence there is a uniquely
defined circle bundle. The faces $H_{\II,*}$ fiber over the original
intersection of the divisors in $M$ as a torus bundle over a cylinder, with
a circle subbundle corresponding to the diagonal action in the angular
variables in \eqref{MetResMod.294}.

\begin{proposition}\label{BunDef} The circle bundles, and torus bundles in
  the case of $H_{\II}\subset\hC,$ over the boundary hypersurfaces of $\hM$
  and $\hC$ have well-defined extensions off the boundaries up to infinite order
  and the rotation-invariant defining functions are determined up to second
  order.
\end{proposition}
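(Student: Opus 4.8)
The plan is to reduce the statement to the local plumbing model and to track how the angular and radial coordinates transform under the only holomorphic coordinate changes that preserve the normal form of the Lefschetz map. The structural input is the classification of such changes: in the base, Lemma~\ref{9.3.2017.8} shows that a local holomorphic defining function $t$ for a divisor $G_i'$ is determined up to $t\longmapsto ct(1+th(t))$ with $|c|=1$, and in the domain Lemma~\ref{9.3.2017.7} gives the corresponding two-variable statement $z\longmapsto e^{i\gamma}z(1+zf(z)+wF(z,w))$, $w\longmapsto e^{i\gamma'}w(1+wg(w)+zG(z,w))$ at a node. Proposition~\ref{MetResMod.411} records that the unit-modulus constraint is exactly the datum of a hermitian structure on the normal bundle.

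First I would prove the infinite-order statement for the circle bundles. Writing a single defining coordinate as $z=re^{i\theta}$, the admissible change gives $\arg Z=\gamma+\theta+\operatorname{Im}\log(1+zf(z)+\dots)$; since the correction is $O(|z|)$ and $r=|z|=e^{-1/s}$ is flat in the logarithmic defining function $s=\ilog r$, the angle changes only by the constant rotation $\gamma$ plus a term vanishing to infinite order at $s=0$. Hence the fibration by angles together with its $S^1$-action, equivalently the vector field $\partial_\theta$, is canonical on the boundary and its extension into the interior is ambiguous only by a flat term; this is the assertion that the circle bundle extends off the boundary to infinite order, and it applies verbatim to the divisors of $\hM$ and to the fixed and type~I faces of $\hC$. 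For the type~II faces the identical computation applied to each of the two node angles $\theta_z,\theta_w$ shows that the whole $T^2$-action, with its distinguished diagonal (base) circle $\theta_z+\theta_w$, is determined to infinite order.

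Next I would treat the defining functions. The same estimate gives $\log(1/|Z|)=\log(1/|z|)+O(|z|)=s^{-1}+(\text{flat})$, so $\ilog|Z|=s+(\text{flat})$ and the logarithmic radial variable $s$ is canonical to infinite order once a holomorphic gauge is fixed. The genuine second-order statement appears on comparing $s$ with the length function furnished by the canonical hermitian metric $H$ on the normal bundle: setting $|z|_H=\sqrt{H}\,|z|$ and $h=\ilog|z|_H$ gives $h^{-1}=s^{-1}-\tfrac12\log H$, whence $h=s+\tfrac12 s^2\log H+O(s^3)$. Since $H$ is canonical but not constant along the divisor, the two equally natural rotation-invariant defining functions $s$ and $h$ share their leading coefficient but differ at second order; no choice is distinguished beyond first order, which is exactly the assertion that such a defining function is determined up to second order, and it reproduces the relation $h_j=s_j+O(s_j^2)$ used in Theorem~\ref{MetResMod.375}. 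The blown-up type~II defining function $R=\sqrt{s_z^2+s_w^2}$ and the two angular functions $s_z/R,\,s_w/R$ inherit this behaviour by applying the one-variable computation to $s_z$ and $s_w$ separately.

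The main obstacle is conceptual bookkeeping rather than hard analysis: one must see clearly why the angular data is canonical to infinite order while the radial data is canonical only to second order. The reconciliation is that flatness of $e^{-1/s}$ annihilates every effect of the holomorphic correction factor $1+zf(z)+\dots$, so the only surviving non-canonical contribution to a rotation-invariant defining function is the honest second-order term $\tfrac12 s^2\log H$ produced by the hermitian rescaling. A secondary point to check is that the hermitian structures invoked in the domain, for the type~I and type~II faces, are available from the same argument as Proposition~\ref{MetResMod.411} applied through Lemma~\ref{9.3.2017.7}.
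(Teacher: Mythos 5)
Your proposal is correct, and the underlying computation is the same as the paper's, but you route it through different structural inputs, so the comparison is worth recording. The paper's proof of Proposition~\ref{BunDef} is self-contained and uses neither Lemma~\ref{9.3.2017.8}, Lemma~\ref{9.3.2017.7}, nor Proposition~\ref{MetResMod.411}: it observes that any two holomorphic defining functions for a divisor are related by $z'=\alpha z(1+z\beta(z))$ with $\alpha$ non-vanishing and independent of $z$ (not necessarily unimodular), takes logarithms, and reads off both conclusions at once --- the term $\log(1+z\beta)$ is $O(|z|)$, hence vanishes to infinite order in $\rho=\ilog|z|$, so the imaginary part gives the circle (and, after the radial blow-up, torus) actions to infinite order, while the real part gives $\rho'=\rho+\rho^2\log|\alpha|+O(\rho^3)$, i.e.\ determination exactly up to second order. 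You instead first restrict to the normal-form gauges of Lemma~\ref{9.3.2017.8}, where $|c|=1$ makes $s$ canonical up to flat terms, and then exhibit the quadratic ambiguity by comparing with the hermitian length function, $h=s+\tfrac12 s^2\log H+O(s^3)$. This buys a conceptual dividend: it identifies the second-order ambiguity as precisely the hermitian rescaling of Proposition~\ref{MetResMod.411}, which is the relation $h_j=s_j+O(s_j^2)$ quoted after Theorem~\ref{MetResMod.375}. But note that what is actually invoked later (at the end of the proof of Theorem~\ref{MetResMod.375}) is the unrestricted statement --- an arbitrary, not necessarily unimodular or even metrically normalized, change of holomorphic defining function moves $s_j$ only by $O(s_j^2)$ --- and the unrestricted computation is also what covers the fixed faces $F_*$ and the type~$\I$ faces of $\hC$, for which no normal-form gauge classification is stated in the paper (a point you flag yourself as needing a check). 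Your own algebra already proves the general case: replace $\sqrt{H}$ by a general non-vanishing $|\alpha|$ depending on the tangential variables, and note that rotation by $\arg\alpha$ still preserves the circle fibration. So this is a difference of organization rather than a gap, with the paper's arrangement being the shorter and more general one.
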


\begin{proof} A complex hypersurface in a complex manifold has a local
  defining function $z$ which is well-defined up to a non-vanishing complex
  multiple so another defining function is 
\begin{equation}
z'=\alpha z(1+z\beta(z))
\label{MetResMod.315}\end{equation}
where $\alpha$ is independent of $z.$ Thus a branch of the logarithm
satisfies 
\begin{equation}
\log z'=\log z+\log\alpha +\log(1+z\beta).
\label{MetResMod.316}\end{equation}
In terms of $\rho =\ilog|z|$ the last term vanishes to infinite order with
$\rho$ so the circle action, given by the imaginary part of
\eqref{MetResMod.316} is defined up to infinite order. The real part of
\eqref{MetResMod.316} shows that the new defining function for the
hypersurface is
$$
\rho'=\ilog|z'|=\frac{\ilog|z|}{1-i\log|z|\log|\alpha|+\log|1+z\beta|}=\rho +O(\rho ^2).
$$
Thus there is a determined class of defining functions differing only by
second order terms. Under the further, radial, blow up the torus bundle and
defining function 
\begin{equation}
\rho _{\II}=\rho _1+\rho _2
\label{MetResMod.348}\end{equation}
have similarly well-defined extensions.
\end{proof}

\begin{definition}\label{MetResMod.381} The space of smooth `rotationally
  flat' functions
on $\hC$ which are invariant to infinite order under the circle and torus
  actions will be denoted $\CI_{\theta}(\hC)\subset\CI(\hC).$
\end{definition}

\noindent There is a similar notion on $\hM$ and for metrics on $\hC$ and
$\hM$ and other bundles to which the circle and torus actions naturally extend.
The discussion above shows that the boundary defining functions
are all rotationally flat, 
\begin{equation}
\rho _F,\ \rho _{\I},\ \rho _{\II}\in\CI_{\theta}(\hC).
\label{MetResMod.382}\end{equation}

Such a defining function, determined to leading order, is particularly
relevant at the boundary hypersurfaces, $H_{\II,*}\subset\hC,$
corresponding to the blow-up of the variety of singular points of the
Lefschetz fibration; we say it determines a cusp structure there and denote
by $\rho _{\II}\in\CI(\hM)$ an admissible defining function in this
sense. As a direct consequence of Proposition~\ref{MetResMod.344}, the null
bundle of the differential $\hphi_*$ extends smoothly to a subbundle of
$\bT\hM.$ The holomorphy of $\phi$ means this has a complex structure away
from the boundaries but this does not extend to a complex structure on the
null bundle, just corresponding to the fact that the null bundle after the
first resolution does not lift to the null bundle of the differential after
the second stage. Rather it lifts to a rescaled version of this,
corresponding to the `cusp' structure at $H_{\II,*}.$

\begin{figure}[htp]
\centering
%



\includegraphics{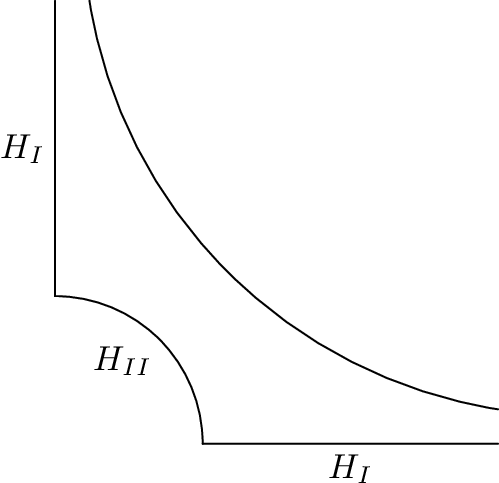}
\caption{The metric resolution that introduces two boundary hypersurfaces $H_{\I}$ and $H_{\II}$}
\end{figure}

\begin{lemma}\label{MetResMod.345} The null bundle of the Lefschetz map
  lifts, with its complex structure, to the rescaled bundle which has
  global sections 
\begin{equation}
\CI(\hC;\hL)=\{v\in\CI(\hC;\bT\hM);\hphi_*(v)=0\Mand v\rho_{\II}=O(\rho_{\II}^2)\}
\label{MetResMod.346}\end{equation}
and is equivariant to infinite order with respect to the circle and torus actions.
\end{lemma}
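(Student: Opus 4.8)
The plan is to localize: away from the singular surfaces $S_i$ the lift is routine, and the whole content of the lemma is the behaviour at the type II faces $H_{\II,*}$, so I would reduce to the plumbing model and compute there. Away from $\bigcup_i S_i$ the map is a holomorphic submersion (Lemma~\ref{MetResMod.321}) and, by \eqref{MetResMod.295}, the real resolution carries $\DT^{1,0}C$ isomorphically onto the b-tangent bundle with its induced complex structure; the logarithmic step and the fibration structure of Proposition~\ref{MetResMod.343} do not disturb this. Hence over the regular part, over the fixed faces $F_*,$ and over the interior of the type I faces $H_{\I,*},$ the null bundle $L$ lifts directly, with its complex structure, to the genuine fiber b-tangent bundle $\{v\in\CI(\hC;\bT\hC):\hphi_*v=0\}$ and no rescaling occurs, since $\rho_{\II}$ is then bounded below. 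It remains only to analyse a neighbourhood of a point of $H_{\II,*}.$

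The key step is the local model computation. Near such a point I would use Lefschetz coordinates $z,w$ in which, by Lemma~\ref{MetResMod.321}, $L$ is spanned by $z\pa_z-w\pa_w.$ Passing through the real blow-up via \eqref{MetResMod.295} and then the logarithmic substitution $s_z=\ilog r_z,$ $s_w=\ilog r_w$ (under which $r\pa_r=s^2\pa_s$), this generator lifts to
\[
V=\big(s_z^2\pa_{s_z}-s_w^2\pa_{s_w}\big)+i\big(\pa_{\theta_z}-\pa_{\theta_w}\big).
\]
Introducing the radial coordinates $R=\rho_{\II}=s_z+s_w$ and $w_1=s_z/R,$ $w_2=s_w/R$ of Proposition~\ref{MetResMod.344} (so that $\hphi^*\ilog r=Rw_1w_2$ and $\theta=\theta_z+\theta_w$), I would rewrite the real part in the smooth b-frame $\{R\pa_R,\pa_{w_1}\}$ and observe the crucial cancellation: the coefficients of $R\pa_R$ and of $\pa_{w_1}$ are each divisible by $R,$ giving
\[
\operatorname{Re}V=\rho_{\II}\,N,\quad N=(w_1-w_2)\,R\pa_R+w_1w_2\,\pa_{w_1},
\]
where $N$ is a smooth, fiber-tangent b-vector field with $\hphi_*N=0.$ By contrast the imaginary part $\pa_{\theta_z}-\pa_{\theta_w}$ is an $O(1)$ generator of the diagonal torus action and is precisely the second real generator of the fiber b-tangent bundle. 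Thus $V=\rho_{\II}N+i(\pa_{\theta_z}-\pa_{\theta_w})$ is a smooth, non-vanishing section of $\bT\hC$ annihilated by $\hphi_*.$

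Next I would identify $\hL$ and the rescaling. Since $(\pa_{\theta_z}-\pa_{\theta_w})\rho_{\II}=O(\rho_{\II}^\infty)$ while $N\rho_{\II}=(w_1-w_2)\rho_{\II}+O(\rho_{\II}^2)=O(\rho_{\II}),$ a null b-field $\alpha N+\beta(\pa_{\theta_z}-\pa_{\theta_w})$ satisfies the defining condition $v\rho_{\II}=O(\rho_{\II}^2)$ of \eqref{MetResMod.346} exactly when its real ($N$-)component vanishes to first order, i.e. exactly on the span of $\rho_{\II}N$ and $\pa_{\theta_z}-\pa_{\theta_w}.$ This is precisely the rescaled subbundle for which $V$ is a smooth frame, and the complex structure of $L$ --- which sends $\operatorname{Re}V=\rho_{\II}N$ to $\operatorname{Im}V=\pa_{\theta_z}-\pa_{\theta_w}$ --- then extends smoothly across $H_{\II},$ whereas the unrescaled fiber b-tangent bundle would require $JN=\rho_{\II}^{-1}(\pa_{\theta_z}-\pa_{\theta_w})$ and carries no limiting complex structure. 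Matching this local description with the direct lift on the regular region produces the global complex line bundle $\hL$ with the stated sections. Equivariance to infinite order is then read off the frame: $N$ has coefficients built from the rotationally flat functions $R=\rho_{\II}$ and $w_1,w_2$ (Proposition~\ref{BunDef}), and $\pa_{\theta_z}-\pa_{\theta_w}$ generates the torus action, so both the circle and torus actions fix $V$ modulo $O(\rho^\infty).$

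The main obstacle I expect is not the model computation but coordinate-invariance and patching. The admissible Lefschetz coordinates are determined only up to the holomorphic changes of Lemma~\ref{9.3.2017.7}, and one must show these fix both the rescaled bundle and the $O(\rho_{\II}^2)$ condition; here Lemma~\ref{9.3.2017.8} and Proposition~\ref{MetResMod.411} guarantee that the cusp defining function $\rho_{\II}$ is determined to second order (Proposition~\ref{BunDef}), which is exactly the accuracy needed to make \eqref{MetResMod.346} invariant. The other delicate point is the behaviour at the corners $H_{\I,*}\cap H_{\II,*},$ where $w_1w_2\to0$ and $N$ degenerates: there one must check that $V$ remains a smooth non-vanishing frame of $\hL$ and that the rescaling is compatible with the unrescaled lift along the adjacent type I face, so that $\hL$ is genuinely a smooth bundle across the corner.
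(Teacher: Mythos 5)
Your proposal is correct and takes essentially the same route as the paper: the paper's proof likewise declares the result immediate away from $H_{\II,*},$ lifts the generator $z\pa_z-w\pa_w$ in plumbing coordinates to $\rho_1^2\pa_{\rho_1}-\rho_2^2\pa_{\rho_2}+i(\pa_{\theta_1}-\pa_{\theta_2})$ (your $V$), and concludes from $v(\rho_1+\rho_2)=(\rho_1+\rho_2)^2\left(\frac{\rho_1}{\rho_1+\rho_2}-\frac{\rho_2}{\rho_1+\rho_2}\right)$ --- exactly your factorization $\operatorname{Re}V=\rho_{\II}N$ with smooth projective coefficients --- that the lifted bundle is the rescaled bundle \eqref{MetResMod.346} with smooth complex structure. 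Your extra discussion of the frame in the blown-up coordinates, the equivariance, the coordinate-invariance of the $O(\rho_{\II}^2)$ condition via Proposition~\ref{BunDef}, and the behaviour at the corners $H_{\I,*}\cap H_{\II,*}$ spells out points the paper leaves implicit.
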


\begin{proof} The result is immediate away from $H_{\II,*}.$ In plumbing
  coordinates $z=r_1e^{i\theta_1},$ $w=r_2e^{i\theta_2}$ near the singular
  points the null space of the differential of $\phi$ lifts in terms of
  $\rho_{i}=\ilog r_i$ to be spanned by
\begin{equation}
\begin{gathered}
v=r_1\pa_{r_1}-r_2\pa_{r_2}+i(\pa_{\theta_1}-\pa_{\theta_2})=
\rho_1^2\pa_{\rho_1}-\rho_2^2\pa_{\rho_2}+i(\pa_{\theta_1}-\pa_{\theta_2}),\\
v(\rho_1+\rho_2)=\rho_1^2-\rho_2^2=(\rho_1+\rho_2)^2\left(\frac{\rho_1}{\rho_1+\rho_2}-\frac{\rho_2}{\rho_1+\rho_2}\right).
\end{gathered}
\label{MetResMod.347}\end{equation}
Since $\rho_{\II}=\rho_1+\rho_2$ is a local admissible defining function for
$H_{\II,*}\subset\hM$ and the projective functions are smooth on $\hM$ this
shows that the null bundle lifts to \eqref{MetResMod.346} with its complex
structure remaining smooth.
\end{proof}

\section{Log-smoothness of the fiber metrics}\label{outline}

We proceed to outline the proof of Theorem~\ref{MetResMod.342} in the
context of the metric resolution of a general smooth stable Lefschetz
fibration, as defined above, $\hC\longrightarrow\hM.$ Before the real
resolution, the regular fibers of $C\longrightarrow M$ are Riemann
surfaces, with punctures where they intersect the fixed divisors
$F_i\subset C$ which are mapped surjectively to $M.$ These fixed divisors
cannot intersect, and in consequence the regular fibers are naturally
pointed Riemann surfaces. Similarly, the singular fibers are nodal Riemann
surfaces, with punctures at the intersections with the $F_i,$ which are
disjoint from the nodes. The assumed stability implies the stability of
each of the component punctured Riemann surfaces, after each of the nodes
has been separated into a pair of punctures.

Under these assumptions of stability on each fiber there is a unique
Riemannian metric of curvature $-1$ and finite area which is complete
outside the nodes and marked points; these are the hyperbolic fiber
metrics. We extend the main result of \cite{MetLef}, which corresponds to
the formation of a single node, to the general case. The basic properties
of log-smooth functions on a manifold with corners, in this case $\hC,$ are
recalled in the Appendix. In particular, rotational flatness, meaning that
application of a generator of the circle action corresponding to a resolved
divisor in $\hM$ lifted from the base yields an object vanishing to
infinite order at the preimage of that resolved divisor, extends directly
to the log-smooth case. 

\begin{theorem}\label{MetResMod.285} The hyperbolic fiber metrics for a
  stable Lefschetz map form a log-smooth rotationally flat family of
  hermitian metrics on the complex line bundle $\hL$ over $\hC.$
\end{theorem}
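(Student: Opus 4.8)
The plan is to follow the conformal-factor method of \cite{MetLef}, extending Obitsu--Wolpert, but now carried out uniformly on $\hC$ with its three classes of boundary hypersurface. First I would build an explicit \emph{model} Hermitian metric $g_0$ on the bundle $\hL$ along the fibers of $\hphi$. On each component of a nodal fiber one has the unique complete hyperbolic metric with cusps, and near every node Corollary~\ref{MetResMod.410} supplies holomorphic coordinates $z,w$ in which both the Lefschetz map and the metric are simultaneously in normal form. Grafting these cusp metrics to the standard hyperbolic collar across the plumbing region, and rewriting everything in the logarithmic variables $\rho_{\I},\rho_{\II}$ of the metric resolution, yields a metric that is log-smooth, positive definite, a genuine Hermitian metric on the rescaled bundle $\hL$ of Lemma~\ref{MetResMod.345}, and rotationally flat by Proposition~\ref{BunDef}. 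By construction its fiber Gauss curvature is $-1$ away from the necks, so the defect $f=K_{g_0}+1$ is log-smooth and vanishes to positive order at the type~II hypersurfaces.

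Writing the true hyperbolic metric as $g=e^{2\varphi}g_0$, the curvature equation $K_g=-1$, after separating the linear part of the exponential, becomes the Liouville problem
\[
(\Lap_{g_0}-2)\varphi=f+N(\varphi),\qquad N(\varphi)=e^{2\varphi}-1-2\varphi=O(\varphi^2).
\]
Everything then reduces to the mapping properties of the model operator $P=\Lap_{g_0}-2$ --- the same operator, up to sign, that governs the forcing terms $E_j$ in \eqref{MetResMod.303} --- viewed as a fibered, cusp-type, operator for the b-fibration $\hphi$ of Proposition~\ref{MetResMod.344}. I would compute the indicial roots of $P$ at each boundary hypersurface; at a collar or cusp end the relevant roots solve $\lambda(\lambda+1)=2$, namely $\lambda=1$ and $\lambda=-2$, which is precisely the origin of the simple vanishing and the $s^{-2}$ singular behaviour recorded after \eqref{MetResMod.303}. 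Using the associated normal operators I would show $P$ is invertible between suitable weighted log-smooth conormal spaces on $\hC$, with an inverse $P^{-1}$ that preserves log-smoothness and, since the circle and torus actions commute with $P$ to infinite order by Proposition~\ref{BunDef}, preserves rotational flatness as well.

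With $P$ inverted, I would solve the Liouville problem by a contraction, or Newton, iteration inside the space of small, log-smooth, rotationally flat functions decaying at the boundaries: the forcing $f$ lies in this space, $P^{-1}$ maps it to itself while gaining decay, and $N$ is quadratically small, so the iterates converge to a solution $\varphi$ of the required type. Since $e^{2\varphi}>0$ automatically and $e^{2\varphi}g_0$ has curvature $-1$, uniqueness of the complete hyperbolic fiber metric identifies it with $g$, proving the theorem.

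The main obstacle is the analysis of $P$ at the type~II hypersurfaces, where the fiber geometry is a pinching collar and the bundle $\hL$ carries the $\rho_{\II}$-rescaling of Lemma~\ref{MetResMod.345}. There one must identify the normal operator as an ordinary differential operator in the neck variable, establish its invertibility between the correct weighted spaces, and --- crucially --- propagate and match the resulting conormal expansions across the neck between the two local components of the fiber. This is exactly the step where the transcendental logarithmic resolution is indispensable: the non-integer behaviour in the original radial variable is organised into integer powers with at most linearly growing logarithmic factors in the variables $\ilog r$, accounting for the logarithmic terms in the final log-smooth expansion. Ensuring that this analysis remains uniform across corners of arbitrary codimension, where several pinchings occur at once, is the combinatorially heaviest part of the argument.
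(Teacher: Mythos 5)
Your proposal follows the paper's own route step for step: the grafted model metric extending Obitsu--Wolpert (Proposition~\ref{MetResMod.383}), linearization of the curvature equation at it with the operator $\Lap+2$ (indicial roots $1$ and $-2$ at the cusps, exactly as in Lemma~\ref{MetResMod.429}, with the shifted roots at the necks in Lemma~\ref{MetResMod.430}), and an iteration exploiting the quadratic smallness of $e^{2\varphi}-1-2\varphi.$ You also correctly isolate the hard part, namely the uniform analysis at $H_{\II}$ and across corners of arbitrary codimension, which is the content of Propositions~\ref{MetResMod.98} and~\ref{MetResMod.94}.

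The one step that would fail as literally stated is the last one: a ``contraction, or Newton, iteration inside the space of small, log-smooth, rotationally flat functions.'' The space $\CI_{\log}(\hC)$ is a Fr\'echet space, not a Banach space; a contraction estimate can only hold in some fixed weighted $L^\infty$ or Sobolev norm, and the limit of the iterates is then a priori only an element of that larger space --- nothing in the fixed-point argument itself forces the limit to have a log-smooth expansion, even if every iterate does. The paper circumvents exactly this difficulty by splitting the final step in two: it first constructs a \emph{formal} log-power-series solution (Proposition~\ref{MetResMod.130}), where iteration is legitimate because each step gains a factor of $s^2,$ so every coefficient of the expansion stabilizes after finitely many iterations; it then Borel-sums this formal solution to $f_0$ and solves for the remainder $\tilde f=f-f_0$ using the uniform invertibility of Proposition~\ref{MetResMod.129} in the Banach spaces $s^NH^N_{\bo}(\hC)$ for every $N;$ finally the uniqueness of the hyperbolic fiber metric identifies these solutions with one another and with the interior solution, so the remainder vanishes to infinite order and $f$ is genuinely log-smooth. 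Your sketch contains the ingredients for this repair --- the decay gain per iteration and an appeal to uniqueness --- but as written, uniqueness is invoked only to identify $e^{2\varphi}g_0$ with the hyperbolic metric at the very end, not to establish the regularity of the limit of the iteration, and that second use of uniqueness is what actually closes the argument.
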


\noindent Theorem~\ref{MetResMod.342}, the universal case
$\hCs{g,n}\longrightarrow \hMs{g,n},$ follows from the uniqueness of the
fiber metrics by localizing in the base and passing to a finite cover to
remove the orbifold points.

The proof is carried out in the subsequent sections concluding in
Section~\ref{Conformal} as follows:

\begin{enumerate}
\item In Section~\ref{Grafting} by slightly extending the `grafting'
  construction of Obitsu and Wolpert we construct, in
  Proposition~\ref{MetResMod.383}, a smooth family of 
  Hermitian fiber metrics on $\hL$ which has constant curvature $-1$ near
  the nodal parts and the fixed divisors, is rotational invariant to
  infinite order at the preimage of each divisor in the base and reduces to
  the standard cusp metric on the singular fibers up to quadratic
  error.
\label{PF1}
\item In Section~\ref{Linearized} we examine in detail the properties of
  $(\Lap+2)^{-1}$ for this family of metrics since this is the operator
  appearing in the linearization of the curvature equation in the form
\begin{equation}
(\Lap_{g_{\pl}}+2)u=h=-1-R(g_{pl}).
\label{MetResMod.386}\end{equation}
First we show the uniform boundedness of this operator on appropriate
spaces in which the parameters are incorporated. Next we show, inductively,
the existence of rotationally invariant solutions in the sense of formal
log-power series at the boundary faces lying in the preimages of the
divisors. Combining these two results shows that $(\Lap+2)^{-1}$ acts on
log-smooth rotationally invariant data. \label{PF2}

\item From the `grafted' family of metrics from
  Proposition~\ref{MetResMod.383} below, the regularity of the constant
  curvature family is obtained in Section~\ref{Conformal} by solving the
  equation for the conformal factor $e^{2f}$
\begin{equation}
\Lap_{g_{\pl}}f+e^{2f}+R(g_{pl})=0.
\label{MetResMod.388}\end{equation}
This is first shown to have a formal log-power series solution, by
iteration of the corresponding result for the linearized equation, and then an
application of the implicit function theorem shows that this is the
expansion of the unique solution to \eqref{MetResMod.388} on the regular fibers.
\label{Pf3}
\end{enumerate}

In Section~\ref{WP} this result is applied to yield a corresponding
regularity statement for the Weil-Petersson metric.

\section{The grafted metric}\label{Grafting}

We carry through the construction of a family of metrics on $\hL$ near a
given singular fiber of $\hpsi$ as described in Step (\ref{PF1}) above. As
already noted, this is essentially the construction of Obitsu and
Wolpert~\cite{MR2399166} . Initially we work locally in the base in the
original holomorphic setting, for the universal case passing to a finite
cover of the base to remove orbifold points, but then the metric can be
averaged over the finite group action.

If the base fiber has $k$ pairs of nodal points it lies in the intersection
of $k$ local divisors over which the family is holomorphic. The
uniformization theorem, with smooth parameters, shows the existence of a
smooth family of hyperbolic metrics for this restricted family. Away from
the $2k$ nodal points the family forms a smooth fibration, so with smoothly
varying complex structures and in a smooth local trivialization give a
smooth family of Hermitian metrics.

Near each of the nodal points there are holomorphic coordinates in which
the map is the product of a Lefschetz map and a projection; the $k$-fold
divisor is a smooth submanifold of the zero set of the Lefschetz factor and
the hyperbolic family over this submanifold has cusp singularities along an
intersection of $k-1$ divisors contained in the intersecting pair of
divisors defined by the local Lefschetz singularity. In this sense the
complex structure on the fiber near the nodal points can be arranged to be
locally constant. On each fiber there is a holomorphic coordinate near the
cusp point in terms of which the metric take the standard cusp
form. This fiber holomorphic coordinate can be extended smoothly to yield a
smooth, but not holomorphic, complex defining function for the nodal surface.
This allows the family of constant curvature metrics to be extended off the
singular surface, near the nodal crossing, by the plumbing metric. This
yields a local extension of the initial family of metrics to hyperbolic
metrics which are smooth on $\hL$ near each of the nodal points.

Finally these $2k+1$ families -- the plumbing metric near each of the $2k$
resolved nodes and a smooth family elsewhere -- may be combined to give a
smooth family of rotationally flat metrics on $\hL.$

\begin{proposition}\label{MetResMod.383} Near each point in a $k$-fold
  intersection of divisors in $\MH$ there is a smooth family of
  Hermitian metrics on $\hL$ which restricts to the hyperbolic metrics over
  the intersection of divisors, is equal to the plumbing metric near
  the resolved nodes $H_{\II,*}$ and to the standard metric near the fixed divisors, is
  rotationally flat and has curvature equal to $-1$ to second order at the divisors in $H_{\I, *}.$
\end{proposition}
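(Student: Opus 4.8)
The plan is to assemble the desired family by patching together three kinds of explicitly understood local models, using rotationally flat cutoff functions, and then verifying that the patched metric has the asserted curvature behaviour. First I would fix a point in a $k$-fold intersection of divisors in $\MH$ and work in the finite cover where the orbifold points are removed, as arranged in the discussion preceding the statement. Over the $k$-fold intersection itself (the base being the divisor stratum), the uniformization theorem with smooth parameters gives a genuine smooth family of complete hyperbolic metrics on the nodal fibers; away from the $2k$ nodal points this is a smooth fibration, and in a smooth local trivialization of $\hL$ it yields a smooth Hermitian metric $g_0.$ This is the `elsewhere' piece. Near each fixed divisor $F_l,$ the standard cusp metric $\frac{|dz|^2}{|z|^2(\log|z|)^2}$ provides the model, and near each resolved node $H_{\II,*}$ the plumbing metric provides the model. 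The construction is then to interpolate between these in overlap annuli.

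The key technical device is the one flagged in the paragraph before the statement: on each singular fiber there is a holomorphic coordinate near each cusp point in which the hyperbolic metric takes the standard cusp form, and this fiber coordinate extends \emph{smoothly} (though not holomorphically) to a complex defining function for the nodal surface. This is what permits the plumbing metric $g_{\pl},$ defined in the total space by $|\frac{dz}{z}+\frac{dw}{w}|^2\big/\big(\log|zw|\big)^2$ in Lefschetz coordinates, to be extended off the singular fiber and matched to the hyperbolic family along the stratum. Concretely, near each of the $2k$ resolved nodes I would set the family equal to $g_{\pl},$ which is manifestly rotationally flat in $\theta_1,\theta_2$ and has constant curvature $-1$ exactly, and similarly equal to the standard cusp metric near each $F_l.$ Between these regions and the $g_0$ region I would choose cutoff functions depending only on the radial (in fact the $\ilog$) variables, hence automatically rotationally flat, and form the convex combination of the conformal factors (equivalently patch the Hermitian fiber metrics on the line bundle $\hL$).

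The assertions to verify then split cleanly. Rotational flatness is preserved because every ingredient — the two model metrics, the smooth family $g_0,$ and the radial cutoffs — is invariant under the circle (and torus) actions to infinite order; this is exactly the content of Proposition~\ref{BunDef} and Definition~\ref{MetResMod.381}, so rotational flatness of the patched object is immediate. That the family restricts to the hyperbolic metric over the intersection of divisors holds by construction, since along the stratum the $g_0$ piece \emph{is} the hyperbolic metric and the model pieces agree with it there to the required order by the choice of the adapted fiber coordinate. The one substantive point is the curvature claim at $H_{\I,*}$: I must check that $R(g_{\pl})=-1$ to second order at the type I divisors. Here I would compute the curvature of the plumbing metric in the lifted coordinates and read off the deviation from $-1.$ The point is that the plumbing metric is \emph{exactly} hyperbolic on each factor (it is the pushforward/model of the cusp metric under $t=zw$), and its curvature defect from $-1$ is governed by the cross term mixing the two cusps, which is $O(\rho_{\I}^2)$ in the relevant defining function.

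The main obstacle I anticipate is precisely this last curvature estimate at $H_{\I,*},$ and in particular controlling the error introduced by the \emph{patching} rather than by the model metrics themselves. Each model metric has the correct curvature in its own region — $-1$ exactly for plumbing and cusp, and $-1$ on the nose for $g_0$ restricted to the stratum — but in the interpolation annuli the curvature of a convex combination of conformal factors picks up terms involving derivatives of the cutoffs and the \emph{difference} of the factors being interpolated. The crux is therefore to show that the two metrics being interpolated agree to sufficiently high order (second order in the appropriate boundary defining function) in each overlap region, so that the cutoff derivatives, which are $O(1)$ in the $\ilog$ variable but multiply a difference that is $O(\rho^2),$ still leave a curvature defect that is $O(\rho_{\I}^2).$ Verifying that the adapted (non-holomorphic) extension of the fiber coordinate makes the hyperbolic family and the plumbing model agree to second order along the stratum is the delicate step, and it is exactly the point where the method of \cite{MetLef}, and before it of Obitsu--Wolpert, is being extended from the single-node case $k=1$ to general $k.$ Once that agreement is established, the curvature estimate follows by a direct, if tedious, computation in the $\ilog$-resolved coordinates.
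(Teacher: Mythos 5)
Your proposal follows essentially the same route as the paper's proof: it is the grafting construction of Obitsu--Wolpert, assembling the local families (the plumbing metric near each of the $2k$ resolved nodes, the standard cusp form at the fixed divisors, the uniformized smooth family elsewhere) by a rotationally flat partition of unity applied to the conformal factors, with the curvature claim at $H_{\I,*}$ resting on the fact that the metrics being patched agree to quadratic order, so that the $O(1)$ cutoff derivatives multiply an $O(\rho_{\I}^2)$ difference. Two corrections, neither structural. First, your displayed formula for the plumbing metric is wrong: on the fiber $\{zw=t\}$ one has $\frac{dz}{z}+\frac{dw}{w}=d\log t=0,$ so $|\frac{dz}{z}+\frac{dw}{w}|^2/(\log|zw|)^2$ is identically degenerate along the fibers, and replacing the sum by the difference yields only a flat cylinder metric; the hyperbolic plumbing metric is
\begin{equation*}
g_{\pl}=\Bigl(\frac{\pi}{\log|t|}\,\csc\frac{\pi\log|z|}{\log|t|}\Bigr)^2\frac{|dz|^2}{|z|^2},
\end{equation*}
which is \eqref{MetResMod.30} written in the complex coordinate. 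The properties you actually invoke --- curvature exactly $-1,$ rotational flatness, and quadratic agreement with the cusp metric, the latter coming from the expansion $\pi u\csc(\pi u)=1+O(u^2)$ --- hold for this metric, so your argument is unaffected once the formula is repaired. Second, you locate the delicate step in extending the quadratic matching from the single-node case to general $k;$ in fact this extension is immediate, as the paper's proof observes, because the distinct local components of $H_{\II,*}$ do not intersect, so the patching near each node reduces verbatim to the case of a single Lefschetz factor treated in \cite{MetLef}, and the quadratic agreement needed in each overlap is exactly the single-node statement already available there.
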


\begin{proof} The various families discussed above are Hermitian, for the
  same bundle and are rotationally flat. The different local components of
  $H_{\II,*}$ do not intersect so the discussion reduces to the case of a
  single Lefschetz factor in $\CH,$ patching the extension of the
  limiting metric to the grafting metric. Since they differ only by
  quadratic terms in $\rho _{\I}$ away from (but near) $H_{\II,*}$ it is
  only necessary to use a rotationally flat partition of unity to combine
  the conformal factors.
\end{proof}

\section{The linearized operator}\label{Linearized}

The boundary of the metric resolution of the total space, $\hC,$ has in
general, three types of boundary hypersurfaces. There are the `fixed'
hypersurfaces, $F_{*},$ arising from the resolution of marked points, the
collective hypersurfaces $H_{\I,*}$ which are fibered by the resolved
Riemann surface and the `necks', $H_{\II,*},$ coming from the resolution of
the nodal surfaces. Defining functions for these hypersurfaces will be
denoted $\rho _{\cF},$ $\rho_{\I}$ and $\rho _{\II}$ with the same notation
used for the restrictions of these functions to the other hypersurfaces,
where they are again boundary defining functions. Note that the collective
hypersurfaces $\cF$ and $H_{\II}$ (each consisting of embedded boundary
hypersurfaces intersecting to produce corners) do not meet each other, so
each only meets the collective hypersurface $H_{\I}.$ It should be noted
that, the defining function $\rho_{\I,i}$ can be taken to be the preimage
of a defining function for the corresponding resolved divisor in the base,
except near the component, $H_{\II,i},$ of $H_{\II}$ corresponding to the related node,
where $\rho_{\I,i}\rho_{\II,i}$ is a multiple of the defining function from
the base.

The grafted metric is equal to the plumbing metric near the intersection of
the boundary faces $H_{\I}$ and $H_{\II}$ and so, in terms of the local
coordinates introduced in Section~\ref{Real-resolution}, takes the form
\begin{equation}
g=\frac{\pi^2s^2}{\sin^2(\frac{\pi s}{s_w})}
\left(\frac{ds_w^2}{s_w^4}+d\theta_w^2\right) 
=\frac{\pi^2s^2}{\sin^2(\frac{\pi}{1+\rho_z})}
\left(\frac{d\rho_z^2}{s^2(1+\rho_z)^4}+d\theta_z^2\right).
\label{MetResMod.30}\end{equation}
Here $s=\ilog|t|, s_{w}=\ilog|w|, \rho_{z}=\ilog|z|/\ilog|w|$.

The Laplacian of the metric near the boundary of $H_{\I}$ is
\begin{equation}
\Lap_{\I}=-\frac{\sin^{2}(\frac{\pi s}{s_{w}})}{(\frac{\pi s}{s_{w}})^{2}}
\left((s_{w}\partial_{s_{w}})^{2}+s_{w}\pa_{s_{w}} + \frac{1}{s_{w}^{2}}\pa_{\theta_{w}}\right),
\end{equation}
where $s_{w}=\rho_{\II}$ is the boundary defining function of $H_{\II}$ and
hence a variable restricted on $H_{\I}$. Similarly, near $H_{\II}$ the Laplacian is 
\begin{equation}
\Lap_{\II}=-\frac{\sin^{2}(\frac{\pi}{1+\rho_{z}})}{(\frac{\pi}{1+\rho_{z}})^{2}}
\left( (1+\rho_{z})^{2}\pa_{\rho_{z}}^{2} 
+ 2(1+\rho_{z})\pa_{\rho_{z}} +\frac{\pa_{\theta}^{2}}{s^{2}(1+\rho_{z})^{2}}
\right)
\end{equation}
with
$\rho_{z}=\rho_{\I}$ is the boundary defining function of $H_{\I}$ and a variable on $H_{\II}$.

The plumbing metric is rotationally invariant to infinite order near all
boundary hypersurfaces. Let $\CI_{\theta}(\CH)\subset\CI(\CH)$ denote the
subspace annihilated to infinite order at each boundary hypersurface by the
corresponding generator(s) of the circle bundle(s) $D _{\theta}$, see
Definition~\ref{MetResMod.381}. So to each hypersurface in $H_{\I}$ and
$\cF$ there corresponds one generator and to each component of $H_{\II}$
there correspond two; note that as discussed in Proposition~\ref{BunDef}
these are all defined up to infinite order in view of the introduction of
logarithmic variables.

As indicated in Step (2) in~\S\ref{outline}, we first study the linearized
equation~\eqref{MetResMod.386}:
\begin{equation*}
(\Lap_{g_{\pl}}+2)u=h=-1-R(g_{pl}).
\end{equation*}
Since $-2$ is outside the spectrum of $\Lap$ the resolvent
$\|(\Lap+2)^{-1}\|\le1$ on the regular fibers of $\hC$ has a bound, in
terms of the $L^2$ norm on the fibers, which is independent of the
parameters. In \cite[Proposition~3]{MetLef} the space
$L^2_{\bo}(\hM;L^2(dg))$ of $L^2$ functions, with respect to the b-volume
form on the base, with values in the fiber $L^2$ spaces is identified with
the total weighted $L^2$ space $\rho_{\II}^{-\ha}L^2_{\bo}(\hC).$ This
statement is a tautology except near the nodal hypersurfaces defined by the
$\rho_{\II}.$ Since these are disjoint, the argument for a single node in
\cite{MetLef} carries over unchanged, as does the commutation argument
giving higher regularity with respect to tangential differentiation in all
variables. In consequence the uniform solvability of \eqref{MetResMod.386},
in spaces including the parameters, follows from these same arguments.

\begin{proposition}\label{MetResMod.129}
For the Laplacian of the grafted metric
\begin{equation}
(\Lap+2)^{-1}:\rho _{\II}^{-\ha}H^k_{\bo}(\CH)\longrightarrow \rho
  _{\II}^{-\ha}H^k_{\bo}(\CH)\ \forall\ k\in\bbN.
\label{Lapiso}\end{equation}
\end{proposition}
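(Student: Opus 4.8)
The plan is to treat the case $k=0$ first and then propagate b-regularity. For $k=0$ I would use the identification from \cite[Proposition~3]{MetLef},
\[
\rho_{\II}^{-\ha}L^2_{\bo}(\CH)\cong L^2_{\bo}(\hM;L^2(dg)),
\]
of the weighted b-space on the total space with the space of b-$L^2$ sections over the base of the fiberwise $L^2$ spaces, where $dg$ is the area form of the grafted metric and the base norm is taken against the b-density on $\hM$; this identification is a tautology except near the hypersurfaces $H_{\II,*}$, where the half-power weight records exactly the mismatch between $dg$ and the b-density. Because $-2$ lies outside the spectrum of the nonnegative fiber Laplacian, the spectral theorem gives $\|(\Lap+2)^{-1}\|_{L^2(dg)\to L^2(dg)}\le\ha$ on every regular fiber, with a bound independent of the base parameters. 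Applying this estimate fiberwise and integrating against the b-density on $\hM$ shows $(\Lap+2)^{-1}$ is bounded on $L^2_{\bo}(\hM;L^2(dg))$, which is \eqref{Lapiso} for $k=0$.

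For $k\ge1$ the mechanism is a uniform b-elliptic estimate rather than a bare commutator argument, since $[\Lap,V]$ is second order. Writing $u=(\Lap+2)^{-1}f$, so that $(\Lap+2)u=f$ and $u\in\rho_{\II}^{-\ha}L^2_{\bo}(\CH)$ by the case just treated, the goal is the a priori bound
\[
\|u\|_{\rho_{\II}^{-\ha}H^{k}_{\bo}(\CH)}\le C\big(\|f\|_{\rho_{\II}^{-\ha}H^{k-2}_{\bo}(\CH)}+\|u\|_{\rho_{\II}^{-\ha}L^2_{\bo}(\CH)}\big)
\]
with $C$ independent of the base parameters. The normal forms for $\Lap_{\I}$ and $\Lap_{\II}$ from Section~\ref{Linearized} show that, up to the positive log-smooth conformal factor, $\Lap$ is b-elliptic in $(\rho_{\II},\theta_w)$ near $H_{\I}$ and genuinely elliptic in $\rho_{\I}=\rho_z$ near $H_{\II}$; and since the grafted metric is log-smooth and rotationally flat by Proposition~\ref{MetResMod.383}, the commutators of $\Lap$ with the b-vector fields on $\CH$ --- which are spanned by fiber-tangent fields, the $\hphi$-lifts of b-vector fields on $\hM$, and the generators of the circle and torus actions --- are again second-order b-operators with log-smooth coefficients and preserve the weight $\rho_{\II}^{-\ha}$. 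Commuting these fields through the equation two at a time and invoking the fiberwise b-elliptic estimate bootstraps the regularity; since $f\in\rho_{\II}^{-\ha}H^{k}_{\bo}\subset\rho_{\II}^{-\ha}H^{k-2}_{\bo}$, the displayed bound places $u$ in $\rho_{\II}^{-\ha}H^{k}_{\bo}$, which is \eqref{Lapiso} for general $k$.

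The main obstacle is the corner $H_{\I}\cap H_{\II}$, where the conformal prefactor $\sin^2(\pi s/s_w)/(\pi s/s_w)^2$ degenerates and the two normal forms must be matched uniformly, and where one must verify that the half-integer weight $\rho_{\II}^{-\ha}$ forced by the density identification is compatible with both the fiberwise $L^2$ bound and the elliptic bootstrap. I expect this to reduce, as the preceding discussion indicates, to the single-node analysis of \cite{MetLef}: because the components $H_{\II,*}$ are pairwise disjoint, all estimates localize to a neighbourhood of one node at a time, where the geometry is exactly that of a single Lefschetz factor, and the commutation argument giving tangential regularity in all variables carries over without change. Uniformity in the base parameters, needed to integrate the fiberwise bounds, is guaranteed by the uniform spectral gap at $-2$ and the log-smooth dependence of the grafted metric on the parameters.
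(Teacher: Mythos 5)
Your proposal is correct and follows essentially the same route as the paper: the $k=0$ case via the identification $\rho_{\II}^{-\ha}L^2_{\bo}(\CH)\cong L^2_{\bo}(\hM;L^2(dg))$ from \cite[Proposition~3]{MetLef} together with the parameter-independent fiberwise resolvent bound coming from the spectral gap at $-2$, and higher $k$ by commuting b-vector fields through the equation, with everything localized to a single node at a time because the hypersurfaces $H_{\II,*}$ are pairwise disjoint. The paper's proof is simply a citation of the single-node arguments of \cite{MetLef} with the remark that they carry over unchanged; your write-up fleshes out exactly that skeleton (your framing of the bootstrap as a b-elliptic estimate combined with commutators, rather than a bare commutator argument, is a reasonable rendering of the same mechanism).
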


In particular this result holds for $k=\infty,$ where the spaces become the
$L^2$ based conormal spaces, so conormality on $\CH$ up to the boundaries
for the solution of $(\Lap+2)u=h$ follow from conormality for the forcing
term. Defining functions in the base commute with the operator so
(vanishing) weights with respect to these commute with the inverse.

More refined regularity properties for the family $(\Lap+2)^{-1}$ follow
from an iterative construction of formal series solutions locally near the
preimage of a point in an intersection of divisors in $\hM$, and is based on the
solution to two model problems which  are described explicitly below and we note
their basic solvability properties here.

The first model operator is $\Lap_{\I}+2,$ obtained from $\Lap+2$ by
restriction to the fibers of the hypersurface $H_{\I}$ above the interior
of a particular corner in $\hM,$ i.e.\ intersection of divisors. These
fibers are Riemann surfaces with constant curvature cusp metrics. The main
issue here is the appearance of logarithmic terms, so of order $\log\log
1/|t|$ with respect to the original complex parameters. Thus the solution
we obtain is log-smooth rather than a true formal power series.

\begin{lemma}\label{MetResMod.429} The $L^2$ inverse of $\Lap+2,$ for a cusp
metric on a Riemann surface $S,$ applied to functions
rotationally-invariant at the ends, satisfies
\begin{multline}
h=\sum\limits_{0\le j\le k}\rho_{\II}(\log\rho_{\II})^j h_j,\ h_j\in\CI_{\theta}(S)\Longrightarrow \\
(\Lap+2)^{-1}h=\sum\limits_{0\le j\le k+1}\rho_{\II}(\log\rho_{\II})^j u_j,\ u_j\in\CI_{\theta}(S).
\label{MetResMod.421}\end{multline}
\end{lemma}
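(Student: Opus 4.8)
The plan is to combine the global $L^2$ theory for $\Lap+2$ with an indicial (normal operator) analysis at the cusps of $S,$ the extra logarithm being \emph{forced} by the coincidence of the leading exponent $\rho_{\II}^1$ of $h$ with an indicial root. Since the cusp metric has nonnegative Laplacian, $-2$ lies outside the spectrum, $h$ is itself in $L^2$ (the leading term $\rho_{\II}(\log\rho_{\II})^k$ is square-integrable against the cusp density $d\rho_{\II}\,d\theta$), and $u=(\Lap+2)^{-1}h$ is the unique $L^2$ solution. As $\Lap$ commutes with the generator of the circle action at each end, $\pa_{\theta}u=(\Lap+2)^{-1}\pa_{\theta}h$ is the image of a function vanishing to infinite order, so $u$ is again rotationally invariant to infinite order at the ends. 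It therefore suffices to work in the rotationally-invariant sector and to pin down the expansion of $u$ at each cusp.

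Near a cusp the metric is exactly the model cusp metric, so on rotationally-invariant functions
\begin{equation*}
\Lap+2=-(\rho_{\II}\pa_{\rho_{\II}})^2-\rho_{\II}\pa_{\rho_{\II}}+2=-(L-1)(L+2),\quad L=\rho_{\II}\pa_{\rho_{\II}},
\end{equation*}
an Euler operator with indicial polynomial $p(\sigma)=-(\sigma-1)(\sigma+2)$ and indicial roots $\sigma=1,-2.$ On the span of the $\rho_{\II}^{m}(\log\rho_{\II})^{l}$ the operator $L$ acts as the scalar $m$ plus a nilpotent part lowering the log-degree, so $(\Lap+2)\rho_{\II}^{m}(\log\rho_{\II})^{l}=p(m)\rho_{\II}^{m}(\log\rho_{\II})^{l}+(\text{lower log-degree}).$ At $m=1$ one has $p(1)=0,$ so solving $(\Lap+2)u^{(1)}=\rho_{\II}\sum_{j\le k}(\log\rho_{\II})^{j}h_j|_{\rho_{\II}=0}$ requires inverting the nilpotent factor $(L-1)$ on the range, which raises the log-degree by exactly one and produces $u^{(1)}=\rho_{\II}\sum_{j\le k+1}(\log\rho_{\II})^{j}c_j.$ For every $m\ge2,$ however, $p(m)=-(m-1)(m+2)\ne0,$ so inversion at order $\rho_{\II}^{m}$ is carried by the invertible leading scalar $p(m)$ and does \emph{not} raise the log-degree; the forcing at that order comes from Taylor coefficients of the $h_j$ and from the already-constructed lower-order terms, all of log-degree $\le k+1.$ Iterating in $m$ yields a formal expansion $\sum_{m\ge1}\rho_{\II}^{m}P_m(\log\rho_{\II})$ with $\deg P_m\le k+1$ for all $m.$ This capping of the logarithmic degree at $k+1$ is the one genuinely delicate point, since a priori each order of the iteration could contribute a further logarithm; it is precisely the absence of further nonnegative integer indicial roots that prevents this.

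Taking an asymptotic sum of these cusp expansions, and using ordinary elliptic regularity on the compact part of $S$ where $\Lap+2$ is invertible, gives a function $\tilde u\in\sum_{j\le k+1}\rho_{\II}(\log\rho_{\II})^{j}\CI_\theta(S)$ with $(\Lap+2)\tilde u=h+e,$ where $e=O(\rho_{\II}^{\infty})$ at every cusp. Then $v=u-\tilde u\in L^2$ solves $(\Lap+2)v=-e$ with rapidly vanishing right-hand side. By the mapping property of $(\Lap+2)^{-1}$ (Proposition~\ref{MetResMod.129}, here on the single surface $S$) together with the b-/cusp-calculus analysis of \cite{MetLef}, $v$ is polyhomogeneous and its expansion at each cusp is drawn only from indicial roots compatible with $L^2,$ namely $\sigma=1$ alone; since that root is simple and the forcing is rapidly vanishing, no logarithm accompanies it, and the higher terms $\rho_{\II}^{1+l}$ with $l\ge1$ meet no indicial root either, so $v\in\rho_{\II}\CI_\theta(S).$ Adding, $u=\tilde u+v$ has the form \eqref{MetResMod.421} with logarithmic degree at most $k+1,$ and with coefficients $u_j\in\CI_\theta(S),$ as claimed.
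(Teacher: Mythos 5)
Your proposal is correct and takes essentially the same route as the paper's own proof, which deduces invertibility of $\Lap+2$ from essential self-adjointness and positivity and then reduces, at each resolved cusp, to the zero Fourier mode ODE with regular singular points and indicial roots $1$ and $-2$, the coincidence of the leading exponent of $h$ with the root $1$ accounting for exactly one additional logarithm. The paper delegates the details to \cite[Lemma~4]{MetLef}; your explicit indicial bookkeeping, cap on the log-degree at orders $m\ge 2$ where $p(m)\neq 0$, and Borel-sum-plus-remainder argument is precisely the expanded form of that argument.
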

\begin{proof} The  proof is the same as in~\cite[Lemma 4]{MetLef}. The Laplacian is essentially
  self-adjoint and non-negative, so $\Lap+2$ is invertible. Near the boundary the
  zero Fourier mode satisfies a reduced, ordinary differential, equation
  with regular singular points and having indicial roots $1$ and $-2$ in
  terms of a defining function for the (resolved) cusps. Then the form of solution~\eqref{MetResMod.421} follows.
\end{proof}

The second model operator is the ordinary differential operator arising
from $\Lap+2$ conjugated by $\rho _{\II}$ and restricted to the `necks'
$H_{\II}$ and then projected onto the the zero Fourier mode. The fibers are
now cylinders, projecting to interval, $I.$ These operator have a regular
singular points with two indicial roots, $0$ and $3,$ where the first
corresponds to the simple vanishing in \eqref{MetResMod.421}.

\begin{lemma}\label{MetResMod.430} For the (reduced) model operator
  $\Lap_{\II}+2$ the Dirichlet problem is uniquely solvable and for smooth
  boundary data and a smooth forcing term has solution 
\begin{multline}
\text{If }v_{\pm}\in\bbR,\ r=\sum\limits_{0\le j\le
  k}(\log\rho_{\I})^jr_j,\ r_j\in\CI(I)\Mthen\\
(\Lap_{\II}+2)v=r,\ v\big|_{\pa I}=v_{\pm}\Longrightarrow \ v=\sum\limits_{0\le j\le
  k}(\log\rho_{\I})^jv'_j+\sum\limits_{0\le j\le k+1}(\log\rho_{\I})^j\rho_{\I}^3v''_j.
\label{MetResMod.431}\end{multline}
\end{lemma}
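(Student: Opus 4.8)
The plan is to treat $\Lap_\II+2$ as a second-order ordinary differential operator on the interval $I$ with a regular singular point at each end, and to read the solution off from its indicial structure. As recorded just above the lemma, after conjugation by $\rho_\II$ and projection to the zero Fourier mode the operator has, at each endpoint, the two indicial roots $0$ and $3$; to leading order it is $-(\rho_\I\partial_{\rho_\I})(\rho_\I\partial_{\rho_\I}-3)$ in the defining function $\rho_\I$ of the adjacent face $H_\I$, with lower-order (in $\rho_\I$) smooth perturbations coming from the $\tanh$-terms in the collar and from the change of variable between $\rho_\I$ and the arclength along the neck.

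First I would dispose of unique solvability. Since $-2$ lies outside the spectrum of the non-negative self-adjoint Laplacian, $\Lap+2>0$, and this positivity descends to the reduced operator, so the homogeneous Dirichlet problem has only the zero solution. The operator is Fredholm of index zero on the natural domain between the two regular singular endpoints, so triviality of the kernel yields existence and uniqueness. Concretely I would build the Green's function from the two one-dimensional solution spaces at each end, spanned by a solution $u_0$ with leading behaviour $\rho_\I^0$ and a solution $u_3=O(\rho_\I^3)$; this is the exact analogue of the single-node construction of \cite{MetLef}, and since the different necks $H_{\II,*}$ are disjoint the argument is purely local in each factor.

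The structure \eqref{MetResMod.431} then comes from the Frobenius/indicial recursion. Writing the prospective solution as $v=\sum_j(\log\rho_\I)^j v'_j+\sum_j(\log\rho_\I)^j\rho_\I^3 v''_j$ and matching powers of $\rho_\I$, the coefficient at order $\rho_\I^m$ is obtained by inverting the indicial factor $m(m-3)$. This is invertible for every $m\neq 0,3$ and preserves the log-degree, so it produces the smooth coefficients $v'_j$ of the leading (root-$0$) part, whose log-degree is bounded by that of the prescribed boundary data and of $r$, namely $k$. The only obstruction occurs at the resonant order $m=3$, where the integer gap $3-0$ between the roots forces a single extra factor of $\log\rho_\I$; this is the same mechanism by which the root-$0$ homogeneous solution $u_0$ acquires a $\rho_\I^3\log\rho_\I$ tail, and by which the forcing-driven part acquires its highest log-power. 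Consequently the $\rho_\I^3$-part has log-degree at most $k+1$, giving precisely the two sums in \eqref{MetResMod.431} with the stated ranges. Smoothness of the coefficients $v'_j,v''_j$ up to the endpoints, and termination of the expansion, follow as in \cite[Lemma 5]{MetLef}.

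I expect the main obstacle to be the logarithmic bookkeeping at the resonance: verifying that the gap-$3$ resonance contributes at most one additional power of $\log\rho_\I$ and that this power is deposited at order $\rho_\I^3$ rather than at the leading order, so that the root-$0$ part retains log-degree $\le k$ while only the root-$3$ part is promoted to $\le k+1$. Everything else — invertibility of $m(m-3)$ away from the roots, smoothness of the coefficients, and the passage from the two local regular-singular models to the global two-point Dirichlet problem — is routine once this resonance count is in hand, and is carried out exactly as in the single-node case treated in \cite{MetLef}.
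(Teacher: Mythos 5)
Your proposal is correct and follows essentially the same route as the paper's own (very brief) proof, which simply defers to \cite[Lemma~6]{MetLef}: uniqueness of the Dirichlet problem from positivity and integration by parts, and the form \eqref{MetResMod.431} from the indicial structure of the conjugated operator, with the single additional power of $\log\rho_{\I}$ produced by the integer resonance between the roots $0$ and $3$ and deposited at order $\rho_{\I}^{3}$ --- exactly the paper's remark that even with $r=0$ differing boundary data introduce a logarithmic term whose coefficient vanishes to third order at the ends. The only slips are cosmetic: the smooth perturbations of the normal form come from the $\sin$-factors of the collar metric, not $\tanh$-terms, and the relevant citation is Lemma~6 rather than Lemma~5 of \cite{MetLef}.
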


\begin{proof} This is the same proof as~\cite[Lemma 6]{MetLef}.  Unique
  solvability follows by integration by parts and positivity.  The initial
  source of logarithmic terms is the `second' indicial root for
  $\Lap_{\II}+2,$ even if $r=0$ but the boundary data differ this
  introduces a   logarithmic term with coefficient vanishing to third order
  at the ends. Ultimately this is an effect of scattering on $H_{\I}.$
\end{proof}

In the inductive argument we begin by solving \eqref{MetResMod.421} on the
highest codimension boundary face in the base, where it is uniform, and
then extend the solution to a neighborhood of the preimage. Near the
(fixed) cusps this can be done so that the extension remains in the null
space of $\Lap+2$ since the operator is actually independent of the
parameters. However along the boundary, $H_{\II},$ corresponding to the
resolved nodes this produces an error which is not rapidly vanishing. On
iteration this requires us to solve the same equation, but now with
$h\in\rho\,\CI(S)$ and then the result is that the solution is a sum of two
terms, one in $\rho\,\CI(S)$ and a more singular term in the space
$\rho\log\rho\,\CI(S).$ Repeating this procedure results in higher powers
of logarithms. For this reason we devote considerable effort below to
controlling the growth rate of the powers of the logs of $\rho _{\I}$ and
$\rho _{\II}$ in the formal power series solution.

In \cite{MetLef} the log-smooth expansion of solutions to
\eqref{MetResMod.386} was shown in the simplest case where only one pair of
nodes forms. Following that argument, with the minor modifications due to
the presence of the other cusps, this generalizes directly to the case of a
neighborhood of a point in a single hypersurface in the base. In fact we
extend this a little further by considering a neighborhood of a point of
(possibly) higher codimension in the base but with the coefficients of the
forcing term vanishing to infinite order in all but one of the defining
functions.

Without loss of generality suppose that $s_{1}$ is the distinguished
defining function and denote the others collectively as $s'.$
Similarly, let $\rho_{\I}=\rho _{\I,1}$ and $\rho_{\II}=\rho_{\II,1}$ be defining functions for
$H_{\I,1}$ and $H_{\II,1}$ in $\hC,$ always taken to reduce to $s_{1}$
outside a small neighborhood of the intersection $H_{\I,1}\cap H_{\II,1}.$
Then, as in \cite{MetLef}, to capture the special structure of the
expansion, we introduced spaces
of polynomials in $\log\rho _{\I}$ and $\log\rho _{\II}$ with coefficients
now in $(s')^{\infty}\CI_{\theta}(\CH),$ the space of smooth functions in
$\hC$ vanishing to infinite order at all the hypersurfaces defined by the
$s'$ and also rotationally invariant to infinite order under all the
circle actions:
\begin{gather*}
\cP^{k}=\left\{u=\sum\limits_{0\le l+p\le k}(\log\rho
_{\I})^l(\log\rho _{\II})^pu_{l,p},\ u_{l,p}\in(s')^{\infty}\CI_{\theta}(\CH)\right\}\\
\cP^{k,m}_{\II}=  \left\{u=\sum\limits_{0\le l+p\le k,\ p\le m}(\log\rho
_{\I})^l(\log\rho _{\II})^pu_{l,p},\ u_{l,p}\in(s')^{\infty}\CI_{\theta}(\CH)\right\}, \ m\le k.
\end{gather*}
The second collection give the filtration of the first spaces by the maximal order of
powers of $\log\rho _{II}:$

\begin{proposition}[Compare Proposition 6
    of~\cite{MetLef}]\label{MetResMod.78} For each $k$
\begin{equation}
h\in\rho_{\II}\cP^{k}+\rho _{\I}\rho_{\II}\cP^{k+1}\Longrightarrow
\ \exists\ u\in\rho_{\II}\cP^{k+1}+\rho  _{\I}^2\rho _{\II}\cP^{k+2,k+1}_{\II}
\label{MetLef.61}\end{equation}
such that
\begin{equation}
(\Lap+2)u-h\in s\left(\rho _{\II}\cP^{k+1}+\rho _{\I}\rho _{\II}\cP^{k+2}\right).
\label{MetResMod.73}\end{equation}
\end{proposition}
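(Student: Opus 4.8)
The plan is to construct $u$ by a two-scale matching argument using the two model operators, $\Lap_{\I}+2$ near $H_{\I}$ and $\Lap_{\II}+2$ near $H_{\II}$, exactly paralleling the proof of Proposition~6 in~\cite{MetLef} but now keeping track of the filtration by powers of $\log\rho_{\II}$. First I would observe that, since all coefficients in the forcing term $h$ already lie in $(s')^{\infty}\CI_{\theta}(\CH)$, the problem is effectively one-parameter in $s=s_1$ and we may work with the restricted Laplacian whose leading behaviour at $H_{\I}$ and $H_{\II}$ is governed by $\Lap_{\I}$ and $\Lap_{\II}$ respectively. The strategy is to solve on the boundary face $H_{\I}$ first and then correct the error produced at the neck $H_{\II}$, iterating this correction to produce the full expansion in $\rho_{\II}\cP^{k+1}+\rho_{\I}^2\rho_{\II}\cP^{k+2,k+1}_{\II}$.

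The first step is to treat the component $h\in\rho_{\II}\cP^{k}$ by applying Lemma~\ref{MetResMod.429}: restricting to the fibers of $H_{\I}$, which are constant-curvature cusp Riemann surfaces, the $L^2$ inverse of $\Lap_{\I}+2$ raises the top power of $\log\rho_{\II}$ by at most one, producing a solution in $\rho_{\II}\cP^{k+1}$. This accounts for the first summand in the target space; the jump from $k$ to $k+1$ in~\eqref{MetLef.61} is precisely the logarithmic loss recorded in~\eqref{MetResMod.421}. The second step treats the part of the forcing term supported near the neck: one solves a Dirichlet problem for the reduced operator $\Lap_{\II}+2$ using Lemma~\ref{MetResMod.430}, feeding in as boundary data on $\pa I$ the values inherited from the $H_{\I}$-solution just constructed. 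Here the second indicial root $3$ of $\Lap_{\II}+2$ is responsible both for the factor $\rho_{\I}^2$ (more precisely the $\rho_{\I}^3$-vanishing correction terms $v''_j$ of~\eqref{MetResMod.431}) and for the additional logarithmic gain, giving the refined summand $\rho_{\I}^2\rho_{\II}\cP^{k+2,k+1}_{\II}$ in which the power of $\log\rho_{\II}$ is capped at $k+1$.

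The two local solutions are then patched with a rotationally flat partition of unity subordinate to the cover of a neighborhood of $H_{\I}\cap H_{\II}$ by the $H_{\I}$-region and the $H_{\II}$-region. Because the grafted metric is \emph{exactly} the plumbing metric near this corner (as recorded in~\eqref{MetResMod.30}), the two model operators genuinely agree with $\Lap+2$ there and the commutator between the true operator and each model is lower order; applying $\Lap+2$ to the patched $u$ therefore reproduces $h$ up to an error. The key point is that this error carries an extra factor of $s$: the mismatch between $\Lap+2$ and its model restrictions, together with the cutoff derivatives, all vanish to one additional order in $s$, which is exactly the content of~\eqref{MetResMod.73} asserting $(\Lap+2)u-h\in s(\rho_{\II}\cP^{k+1}+\rho_{\I}\rho_{\II}\cP^{k+2})$. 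The rotational flatness of the partition and of both model solutions keeps us inside $\CI_{\theta}(\CH)$ throughout.

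The main obstacle I anticipate is the bookkeeping of the logarithmic filtration, specifically verifying that the neck contribution lands in $\cP^{k+2,k+1}_{\II}$ rather than the larger $\cP^{k+2}$ — that is, that the power of $\log\rho_{\II}$ does not grow beyond $k+1$ even while the total degree climbs to $k+2$. This requires a careful tracking of which logarithmic factors are generated by the $H_{\I}$-inversion (which produces $\log\rho_{\II}$) versus the $H_{\II}$-Dirichlet solve (which produces $\log\rho_{\I}$, as in~\eqref{MetResMod.431}), and confirming that the boundary data fed into Lemma~\ref{MetResMod.430} contribute at most one further $\log\rho_{\II}$. The uniform boundedness from Proposition~\ref{MetResMod.129}, ensuring the resolvent estimate holds independently of the parameters, is what guarantees the construction does not accumulate unbounded constants across the iteration; and the separation of the distinguished variable $s_1$ from $s'$ (with the latter entering only through the flat factor $(s')^{\infty}$) is what lets the whole one-parameter analysis of~\cite{MetLef} go through essentially verbatim.
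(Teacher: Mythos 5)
Your proposal is correct and follows essentially the same route as the paper: the paper proves this proposition by deferring to the single-node argument of~\cite{MetLef} and to the more general Proposition~\ref{MetResMod.98}, whose proof has exactly your structure --- solve on the cusp Riemann surface fibers of $H_{\I}$ (Lemma~\ref{MetResMod.429}, producing the one-step jump in $\log\rho_{\II}$), then correct along the neck $H_{\II}$ via the reduced Dirichlet problem (Lemma~\ref{MetResMod.430}, producing the $\rho_{\I}$-vanishing terms and the cap giving $\cP^{k+2,k+1}_{\II}$), with rotationally flat patching and an error gaining a factor of $s$ to permit iteration. Your only inessential deviation is invoking Proposition~\ref{MetResMod.129} for uniformity, which is not needed for this purely formal construction and enters only later when the formal series is compared with the true solution.
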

\noindent The form of the error allows for immediate iteration.

As noted above, this follows by slight extension of the arguments in
\cite{MetLef}, and also directly from the more general result below.

We proceed by induction over the local codimension $m$ in the base. Thus
the boundary hypersurfaces in $\hM$ have defining functions $s_i=s_{t,i},$
$i\in\{1,\dots,m\}.$ We proceed by separating these boundary hypersurfaces
into collections corresponding to choice of a subset
$L\subset\{1,\dots,m\},$ $\#(L)=p,$ $L=\{j_1,\dots,j_p\}.$ First consider
the space of polynomials determined by a given multidegree
$\kappa=(\kappa_{j_{1}}, \dots, \kappa_{j_{p}})\in\bbN_0^p$ with smooth
coefficients independent of the appropriate angular variables at the
boundary hypersurfaces faces and vanishing rapidly at the boundary faces on
which $s_j=0$ for $j\notin L;$ this space will be denoted 
\begin{equation}
s_{\complement L}^\infty\CI_\theta(\CH),\ s_{\complement L}=\left(\prod_{j\notin L}s_j\right).
\label{MetResMod.422}\end{equation}
Then the space of polynomials is
\begin{equation*}
\cP_L^{\kappa}=\{\sum_{\alpha_{\I,j}+\alpha_{\II,j}\le\kappa_j}a_{\alpha}\prod_{j\in L}
(\log\rho_{\I,j})^{\alpha_{\I,j}}(\log \rho_{\II,j})^{\alpha _{\II,j}},
a_{\alpha}\in s_{\complement L}^\infty\CI_\theta(\CH)\}.
\label{MetResMod.385}\end{equation*}
Here $\alpha=(\alpha_{j_1},\dots,\alpha_{j_p}) \in (\bbN_{0}^{2})^{p}$ with
$\alpha_{j}=(\alpha _{\I,j},\alpha_{\II,j})\in\bbN_0^2, j\in L$ are the
indices for the powers of logarithms in the two boundary defining functions
$\rho_{\I,j}$ and $\rho_{\II,j}$ near the hypersurface $H_{\II,j}$ such
that $s_j=\rho _{\I,j}\rho _{\II,j}.$ This gives a well-defined space of
functions near the union of the boundary faces determined by $L.$ We also
make use of subspaces which have relatively lower order in the second set
of variables
\begin{equation*}
\cP^{\kappa}_{L,\II}=
\{\sum\limits_{\substack{\alpha_{\I,j}+\alpha_{\II,j}\le\kappa_j\\
\alpha_{\II,j}\le \kappa_j-1}}a_{\alpha}\prod_{j\in L}
(\log\rho_{\I,j})^{\alpha_{\I,j}}(\log \rho_{\II,j})^{\alpha _{\II,j}};
a_{\alpha}\in s_{\complement L}^\infty\CI_\theta(\CH)\}
\end{equation*}
Let $e_{j}$ be the multi-index $(0,...,0,1, 0, ..., 0)$ where the
$j$-th entry is 1, and set
$$
\kappa+ne_{j}=(\kappa_{1}, \dots, \kappa_{j}+n, \dots, \kappa_{p}) \in \bbN_{0}^{p}.
$$
Then consider the polynomials with an extra restriction on one index $j$ that
$\log\rho_{\II,j}$ cannot reach the top degree:
\begin{multline*}
\cP^{\kappa+2e_{j}}_{L,\II}=\\
\{\sum\limits_{\substack{\alpha_{\I,i}+\alpha_{\II,i}\le\kappa_{i}+2\delta_{ij}e_{j}\\ \alpha_{\II,j}\le \kappa_{j}}}a_{\alpha}\prod_{j\in L}
(\log\rho_{\I,j})^{\alpha_{\I,j}}(\log \rho_{\II,j})^{\alpha _{\II,j}};
a_{\alpha}\in s_{\complement L}^\infty\CI_\theta(\CH)\}.
\end{multline*}

Let $\phi\in \CIc(\bbR)$ be a cut-off function with support near $x=0$. Since
$\rho_{\II,i}$ never vanishes simultaneously with any other $\rho_{\II,j}$,
if we take $\phi$ with sufficiently small support $\phi (\rho_{\II,i})$ is
a function that is 1 near $\rho_{\II,i}=0$ and vanishes near
$\rho_{\II,j}=0$ for $j \ne i$. We also consider a collective boundary defining
function for the $H_{\II,i}$
$$
\rho_{\II}^{L} = \prod\limits_{j\in L} \rho_{\II,j}.
$$

\begin{proposition}\label{MetResMod.98} Suppose $L\subset\{1,2,\dots,m\}$ and
  any $\kappa\in\bbN_{0}^{p}$ then for any
\begin{equation}
h \in \cL_{L}^{\kappa}=
\rho_{\II}^{L} \cP^{\kappa}_{L}+
\rho_{\II}^{L} \sum_{j \in L} \rho_{\I,j} \phi(\rho_{\II,j})\cP^{\kappa}_{L,\II},
\label{MetResMod.424}\end{equation}
there exists 
\begin{equation}
u\in\cU_{L}^{\kappa}
=\rho_{\II}^{L}\sum_{i\in L}\cP_{L,\II}^{\kappa+e_{i}}
+\rho_{\II}^{L} \sum_{j\in L}\rho_{\I,j}^2 \phi(\rho_{\II,j})  \cP^{\kappa+2e_{j}}_{L,\II}
\label{MetResMod.423}\end{equation}
such that 
\begin{equation}
(\Lap+2)u-h=\sum_{j \in L}s_{j} g_j,\ g_j\in\cL_{L}^{\kappa+1}.
\label{MetResMod.387}\end{equation}
\end{proposition}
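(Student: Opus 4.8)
The plan is to prove Proposition~\ref{MetResMod.98} by induction on the cardinality $p=\#(L)$, using the two model-operator results (Lemma~\ref{MetResMod.429} and Lemma~\ref{MetResMod.430}) as the base building blocks, exactly as in the single-node case of Proposition~\ref{MetResMod.78}. The key observation is that the hypersurfaces $H_{\II,j}$ are mutually disjoint, so the cut-off functions $\phi(\rho_{\II,j})$ localize the analysis near one neck at a time; away from all necks the operator $\Lap+2$ is (to infinite order) the product operator on the intersection of divisors, where the uniform resolvent bound of Proposition~\ref{MetResMod.129} applies. Thus the whole argument decouples into a contribution uniform on the deepest corner plus separate contributions localized near each $H_{\II,j}$.

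First I would handle the interior/tangential piece. Restricting $\Lap+2$ to the fibers over the codimension-$p$ corner gives a cusp Laplacian on the nodal Riemann surfaces, and applying Lemma~\ref{MetResMod.429} (iterated once in each of the $p$ slots of $\kappa$) produces a solution in $\rho_{\II}^{L}\sum_i\cP_{L,\II}^{\kappa+e_i}$, raising the top log-power by one in each variable and keeping the simple vanishing factor $\rho_{\II}^{L}$. This accounts for the first summand of $\cU_L^{\kappa}$ in~\eqref{MetResMod.423}. Extending this solution off the corner along the fibered directions is harmless near the fixed cusps (the operator there is parameter-independent), but along each $H_{\II,j}$ it generates an error that vanishes only simply in $\rho_{\I,j}$, i.e.\ a term in $\rho_{\II}^{L}\rho_{\I,j}\phi(\rho_{\II,j})\cP^{\kappa}_{L,\II}$, precisely the second forcing type in~\eqref{MetResMod.424}.

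Next I would solve away this neck error using the reduced ordinary differential operator $\Lap_{\II}+2$. After conjugating by $\rho_{\II}$ and projecting onto the zero Fourier mode along the neck cylinder, Lemma~\ref{MetResMod.430} supplies a solution whose leading part (indicial root $0$) can be chosen to match the required boundary data and whose correction term carries the factor $\rho_{\I,j}^2$ and the second indicial root $\rho_{\I}^3$; combined with the $\rho_{\II}^{L}$ prefactor this lands in $\rho_{\II}^{L}\rho_{\I,j}^2\phi(\rho_{\II,j})\cP^{\kappa+2e_j}_{L,\II}$, the second summand of $\cU_L^{\kappa}$. The nonzero Fourier modes are invertible without loss because of rotational flatness: they contribute only to infinite order at the relevant hypersurface and are absorbed into the $s_{\complement L}^{\infty}$ coefficients. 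Patching the interior solution and the $p$ neck solutions with a rotationally flat partition of unity subordinate to the $\phi(\rho_{\II,j})$ yields a global $u\in\cU_L^{\kappa}$, and the commutator of $\Lap+2$ with the cut-offs, together with the leftover tangential error, is supported where one extra factor of some $s_j$ is available, giving the error term $\sum_{j\in L}s_j g_j$ with $g_j\in\cL_L^{\kappa+1}$ of~\eqref{MetResMod.387}.

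The main obstacle I anticipate is bookkeeping the growth of the logarithmic powers and verifying that the index sets $\kappa+e_i$ and $\kappa+2e_j$ (with the $\cP_{L,\II}$ restriction $\alpha_{\II,j}\le\kappa_j$ forbidding the top $\log\rho_{\II,j}$ degree) are exactly preserved under the two inversions. Each application of Lemma~\ref{MetResMod.429} raises the top log-power by one, so the delicate point is showing that the neck step (Lemma~\ref{MetResMod.430}) introduces the new logarithm only against the $\rho_{\I}^3$-vanishing correction, so that the $\log\rho_{\II,j}$ degree never reaches $\kappa_j+1$ in the leading $\rho_{\I,j}^2$ term—this is what the subspace $\cP^{\kappa+2e_j}_{L,\II}$ encodes. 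Ensuring that the error genuinely gains a factor of $s_j=\rho_{\I,j}\rho_{\II,j}$ (rather than only $\rho_{\I,j}$ or only $\rho_{\II,j}$) at the codimension-two corner, so that the iteration in~\eqref{MetResMod.387} closes, is the crux and will require the same careful commutation argument as in~\cite{MetLef}.
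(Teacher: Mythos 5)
Your proposal follows essentially the same route as the paper's own proof: solve first on the most singular fiber over the codimension-$p$ corner via the cusp-Laplacian model of Lemma~\ref{MetResMod.429}, then exploit the mutual disjointness of the hypersurfaces $H_{\II,j}$ to extend off the corner and remove each neck error separately with the reduced operator of Lemma~\ref{MetResMod.430}, the error gaining the factor $s_j=\rho_{\I,j}\rho_{\II,j}$ needed for iteration. The only cosmetic differences are your framing as an induction on $\#(L)$ and the appeal to Proposition~\ref{MetResMod.129}, neither of which the paper's argument actually uses at this step.
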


\begin{proof} We first solve the equation on the most singular fiber $S=\{\rho _{\I,j}=0, \forall j \in L\}$ which is a punctured Riemann surface with cusp ends (from both the fixed divisors and the nodes), where the function $h$ has
  a finite expansion  
\begin{equation}
h=\sum\limits_{\alpha_{j}\leq \kappa_{j}} a_{\alpha}\prod_{j\in L}(\log\rho _{\I,j})^{\alpha_{j}}
\label{MetResMod.90}\end{equation}
with coefficients $a_{\alpha}$ which are polynomials in $(\log\rho
_{\II,j})$'s of multi-index orders at most $(\kappa_{j}-\alpha_{j})$'s
respectively. For fixed $(\alpha_{\I,1}, \dots, \alpha_{ \I,p})$ the
equation
\begin{equation}
(\Lap+2)\sum\limits_{\alpha_{j} \leq \kappa_{j}} v_{\alpha}\prod_{j\in L} (\log\rho _{\I,j})^{\alpha_{j}}=
h+F,
\label{MetResMod.91}\end{equation}
where $F$ is to vanish at $S,$ induces equation on the coefficients restricted
to $S.$ These form an upper-triangular matrix of b-differential operators,
with diagonal entries $\Lap_S+2,$ so can be solved iteratively, over
decreasing $\sum_{i\in L} \alpha_{\I,i},$ and at each level take the form
\begin{equation}
(\Lap_S+2)v_{\alpha}=g_{\alpha}=h_{\alpha}+\sum\limits_{|\beta|>|\alpha|}P_{\beta}v_{\beta},
\label{MetResMod.92}\end{equation}
where $P_{\beta}$ is the operator that extracts $\prod (\log\rho
_{\I,i})^{\alpha_{\I,i}}$ terms from $v_{\beta}$. It therefore follows
inductively that $g_{\alpha}\in\cP_{L}^{\kappa-\alpha}$ and that 
$v_{\alpha}\in\cP_{L}^{\kappa-\alpha+1}.$ It is important to recall here that $\rho
_{\II,i}$ and $\rho _{\II,i'}$ cannot vanish simultaneously when $i\neq i'$. We proceed to
choose an extension of $v_{\alpha}$ off $S,$ but do this separately near
$\rho _{\II,i}=0$  for each $i$. Away from these hypersurfaces any
smooth extension will suffice.

\begin{figure}[htp]
\centering
%
%
%
%
%
\includegraphics{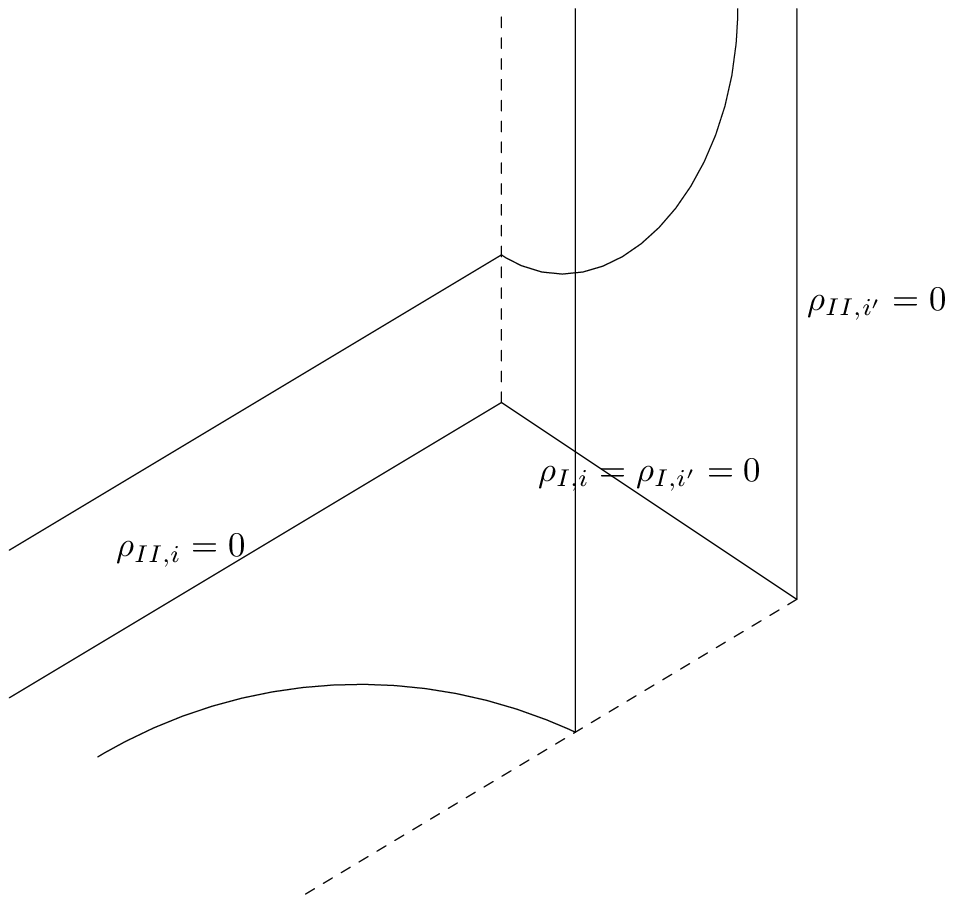}
\caption{The induction procedure with two nodes: curves with two different degenerating directions approaching 
the nodal curve with two nodes}
\end{figure}

Consider the extension of the $v_{\alpha}$ off $S$ near $\rho_{\II,i}=0.$
Since $\rho _{\II,i'}>0$ for any $i'\neq i$, we may replace $\rho_{\I,i'}$ by $s_{i'}$ in this
region. The coefficients $h_{\alpha}$ change, but the form is preserved and
the solutions $v_{\alpha}$ are similarly transformed and the resulting
system is diagonal in $i'.$ Replacing $f$ by $\chi(s_{i})f$ where $\chi$
is a smooth cutoff near zero introduces an error which is in
$s_{i}\cL^{\kappa}_{L}$ and so can be absorbed in the next iteration. Now we can extend the $v_{\alpha}$ from $S$ to
$\rho_{\I,i}=0$ near $\rho_{\II,i}=0$ as solutions of the same equations
although the Laplacian now may depend on $s_{i}$ as a parameter. The error
term vanishes identically at $s_{i}=0$ near $\rho_{\II,i}=0.$ We may now
proceed exactly as in \cite{MetLef} to extend the coefficients $v_{\alpha}$
along $H_{\II,i}$ in such a way as to remove the leading coefficient in
$\rho_{\II,i}.$ Since this is all now uniform in $s_i,$ and the same is
true for the removal of the second term in \eqref{MetResMod.424}, which only
involves the induced differential equation on $H_{\II,i}$, this part of the
solution is certainly of the form \eqref{MetResMod.423} and an error as in
\eqref{MetResMod.387}. A similar construction near $\rho_{\II,i'}=0$ for all other $i'$
completes the proof.
\end{proof}

The results above allow the log-smoothness of the solution of the
linearized equation to be deduced from appropriate log-smoothness of the
data in terms of the spaces defined in \eqref{MetResMod.424} and
\eqref{MetResMod.423} where we drop the suffix $L$ when all boundary
hypersurfaces are involved. Here we abbreviate the indices and write 
$$
s^{\alpha}=\prod_{i=1}^{m} s_{i}^{\alpha_{i}}, \ \alpha=(\alpha_{1}, \dots, \alpha_{m})\in \bbN_{0}^{m}.
$$

\begin{proposition}\label{MetResMod.94} If $h\in\rho _{\II}\CI_{\log}(\CH)$
has expansions at each of the of boundary faces of the form 
\begin{equation}
h\simeq\sum_{\alpha \in \bbN_{0}^{m}} f_\alpha s^\alpha,\ f_{\alpha }\in \cL^{\alpha }
\label{MetResMod.95}\end{equation}
then $u=(\Lap+2)^{-1}h\in\CI_{\log}(\hC)$ has similar expansion
\begin{equation}
u\simeq\sum\limits_{\alpha }u_{\alpha }s^\alpha ,\ u_{\alpha }\in\cU^{\alpha }.
\label{MetResMod.96}\end{equation}
\end{proposition}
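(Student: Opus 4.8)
The plan is to bootstrap Proposition~\ref{MetResMod.98}, which provides a solution on each finite collection of boundary faces with a one-power-of-$s$ improvement in the error, into a full asymptotic expansion by a Borel-summation / formal-series argument combined with the uniform boundedness in Proposition~\ref{MetResMod.129}. First I would interpret the hypothesis \eqref{MetResMod.95}: the forcing term $h$ is conormal on $\CH$ with an asymptotic expansion in powers $s^\alpha$ whose coefficients $f_\alpha$ lie in the log-polynomial spaces $\cL^\alpha=\cL^\alpha_{\{1,\dots,m\}}$ (the suffix $L$ dropped since all faces are involved). The goal \eqref{MetResMod.96} is the corresponding statement for $u=(\Lap+2)^{-1}h$, with coefficients $u_\alpha\in\cU^\alpha$. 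Since we already know from Proposition~\ref{MetResMod.129} that $u$ is conormal on $\CH$ (indeed in $\rho_{\II}^{-\ha}H^\infty_{\bo}$), the content is entirely in identifying the \emph{form} of the coefficients in its expansion.

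The key steps, in order, would be as follows. I would construct a formal solution $\tilde u\simeq\sum_\alpha u_\alpha s^\alpha$ term by term, ordering the multi-indices $\alpha$ by total degree $|\alpha|$. At each stage I apply Proposition~\ref{MetResMod.98} (with $L=\{1,\dots,m\}$) to the current accumulated error: starting from $h_0=h$ with leading coefficient in $\cL^0$, it produces $u_0\in\cU^0$ with $(\Lap+2)u_0-h_0\in\sum_j s_j\cL^{e_j}$, i.e. an error that has gained one factor of some $s_j$ and lies again in the $\cL$-scale at the incremented index. This error then feeds the next stage, producing the $s^\alpha$ terms at $|\alpha|=1$, and so on inductively. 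The crucial bookkeeping is that the error space in \eqref{MetResMod.387} is precisely $\sum_j s_j\cL^{\kappa+1}_L$, which is the input space at the next level of the induction, so the scheme closes and at each fixed multidegree $\alpha$ the coefficient $u_\alpha$ built this way lies in $\cU^\alpha$ as required. Summing the formal series by a Borel lemma (choosing a conormal function with the prescribed asymptotic expansion, legitimate since the $s_i$ are genuine defining functions on the manifold with corners $\hC$) yields a genuine conormal $\tilde u$ with $(\Lap+2)\tilde u - h$ vanishing to infinite order at all the boundary faces, i.e. rapidly decreasing. Finally I would correct $\tilde u$ to the true solution: by Proposition~\ref{MetResMod.129} the operator $(\Lap+2)^{-1}$ maps the rapidly-vanishing residual $h-(\Lap+2)\tilde u$ into a function in $\rho_{\II}^{-\ha}H^\infty_{\bo}(\CH)$ which, being the resolvent applied to something vanishing to infinite order, is itself rapidly vanishing at every face; hence $u-\tilde u$ is rapidly decreasing and contributes nothing to the expansion, so $u$ has exactly the claimed expansion \eqref{MetResMod.96}.

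The main obstacle I expect is controlling the growth of the logarithmic powers across the infinitely many induction steps, and checking that the index bookkeeping in the $\cP^\kappa_L$ versus $\cP^\kappa_{L,\II}$ distinction is stable under iteration. Each application of Proposition~\ref{MetResMod.98} can raise the polynomial degree $\kappa$ in $\log\rho_{\I}$ and $\log\rho_{\II}$ by one (this is the source of the extra logarithmic factor noted in Lemmas~\ref{MetResMod.429} and~\ref{MetResMod.430}, the second indicial root effect), so one must verify that at a fixed total power $s^\alpha$ with $|\alpha|=N$ the accumulated degree in the logarithms is bounded linearly in $N$ — exactly the ``powers growing at most linearly'' clause in the definition of log-smoothness. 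The subtlety is that the coefficient $u_\alpha$ at a given $\alpha$ receives contributions from \emph{several} earlier error terms (one for each way of writing $\alpha$ as a previous index plus a unit increment $e_j$), and one must confirm these all land in the single space $\cU^\alpha$ rather than in a space with uncontrolled log-degree; this is guaranteed by the precise matching of the output space $\cU^{\kappa}_L$ in \eqref{MetResMod.423} with the input space $\cL^{\kappa+1}_L$ in the error \eqref{MetResMod.387}, but the verification that the filtration by $\alpha_{\II,j}\le\kappa_j-1$ is preserved is the delicate point. The remaining steps — the Borel summation and the infinite-order correction via Proposition~\ref{MetResMod.129} — are routine given the conormality already established.
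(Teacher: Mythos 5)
Your outline correctly assembles most of the ingredients (iteration of Proposition~\ref{MetResMod.98}, Borel summation, a final correction using Proposition~\ref{MetResMod.129}), but it has a genuine gap at the central step: the claim that, after Borel-summing the formal series produced by iterating Proposition~\ref{MetResMod.98} with the single choice $L=\{1,\dots,m\},$ the residual $h-(\Lap+2)\tilde u$ vanishes to infinite order \emph{at all boundary faces}. It does not; it vanishes to infinite order only at the preimage of the deepest corner $\{s=0\}$ in $\hM.$ The iteration with full $L$ determines the coefficients $u_\alpha$ only as entries in the joint expansion at that corner, and any asymptotic sum $\tilde u=\sum_\alpha s^\alpha u_\alpha\chi(s/\epsilon_\alpha)$ agrees with the actual solution only in the sense of Taylor-type expansions there. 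At a face of lower codimension, say $\{s_1=0\}$ with the remaining $s_j$ bounded away from $0,$ the expansion coefficients of the true solution are functions on that face, obtained by solving $(\Lap+2)$-equations on the fibers over the face with the other $s_j$ as parameters; such functions are not determined by their Taylor coefficients at $s_j=0,$ which is all the corner iteration sees. So generically $h-(\Lap+2)\tilde u$ retains a nontrivial expansion at each hypersurface $\{s_i=0\},$ and your last step collapses: Proposition~\ref{MetResMod.129} does propagate rapid vanishing in the $s_i$ (base defining functions commute with the fiber Laplacian), but only when the residual actually has it, so $u-\tilde u$ does contribute to the expansions at those faces.

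This is exactly why Proposition~\ref{MetResMod.98} is stated for arbitrary subsets $L\subset\{1,\dots,m\},$ with coefficients required to lie in $s_{\complement L}^\infty\CI_\theta(\CH)$: the paper's proof is a \emph{descending induction on} $\#(L),$ not a single-level iteration. One first carries out your iteration at the deepest corner ($\complement L=\emptyset$) and sums it to get $u_m$; the error $h_{m-1}=h-(\Lap+2)u_m$ is log-smooth, vanishes to infinite order at the preimage of $\{s=0\},$ but has expansions at all the boundary hypersurfaces. Using a polar partition of unity in $s$ it is then split into $m$ pieces, each vanishing to infinite order with one of the $s_i$; each piece satisfies precisely the hypothesis of Proposition~\ref{MetResMod.98} for some $L$ with $\#(L)=m-1$ (rapid vanishing at the omitted faces is what membership in $s_{\complement L}^\infty\CI_\theta(\CH)$ encodes), and the argument recurses down through the codimensions until the residual genuinely vanishes to infinite order at every face, at which point uniform invertibility finishes. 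Your worry about the growth of logarithmic degrees is legitimate but is internal to Proposition~\ref{MetResMod.98}; the missing idea is structural, namely this induction over $L,$ without which one cannot control the expansion of the solution at any face other than the maximal corner.
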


\begin{proof} We first recall (see also the Appendix) the structure of the proof of
  Borel's Lemma, summing formal power series, in this context. Suppose
  $x_i\ge0,$ $i=1,\dots,\ell$ and $y$ are respectively boundary defining functions
  and tangential coordinates near a boundary point of codimension $\ell$ on a
  manifold with corners and  
\begin{equation}
\sum\limits_{\substack{\alpha \in\bbN_0^\ell,\beta \in\bbN_0^\ell \\
\alpha _i<\beta _i}}f_{\alpha,\beta }(y)(\log x)^\alpha x^\beta 
\label{MetResMod.424a}\end{equation}
is a formal power series with smooth coefficients of fixed compact
support. Then, choosing a cutoff function $\chi\in\CIc(\bbR^\ell)$ which is
identically equal to $1$ near zero, the series 
\begin{equation}
u=\sum\limits_{\substack{\alpha \in\bbN_0^\ell,\beta \in\bbN_0^\ell\\
\alpha _i<\beta_i}}f_{\alpha,\beta }(y)(\log x)^\alpha x^\beta\chi(x/\epsilon_{\alpha})\in\CI_{\log}
\label{MetResMod.425}\end{equation}
converges provided the $\epsilon _{\alpha}$ decrease rapidly enough. In
fact for an appropriate choice, base on the norms of the coefficients, all
the finite differences 
\begin{equation}
u(x,y)-\sum\limits_{|\alpha|\le N}f_{\alpha,\beta }(y)(\log x)^\alpha
x^\beta 
\label{MetResMod.424b}\end{equation}
are conormal and vanish to order $N$ at the corner. The same arguments
apply when the coefficients are assumed to be on a manifold with corners
but to vanish to infinite order at all boundaries.

The `asymptotic sum' so obtained is independent of the choices up to an
error in $\CI_{\log}$ which vanishes to infinite order at the corner. Such
an error can be decomposed, using a partition of unity in polar coordinates around
(i.e.\ after blowing up) the corner, and such that each term is
non-zero only near one of the $\ell$ faces of codimension $\ell-1,$ i.e. $\{x_{i}=0\}$.  This allows
for the iterative construction of a log-smooth function with expansions at
each of the boundary hypersurfaces, provided these expansions are formally
compatible in essentially the same sense as for Taylor series.

So, to prove that the solution $(\Lap+2)^{-1}h$ is log-smooth we use
Proposition~\ref{MetResMod.98} repeatedly, first on the highest codimension
boundary face in the base $\hM$ so with $\complement L=\emptyset$ and with
a smooth forcing term $h$ with support disjoint from $H_{\II}$ and the
fixed hypersurfaces in $\hC.$ The formal solution obtained from
iteration can be summed to give
\begin{equation}
u_m=\sum\limits_{\alpha}s^\alpha v_\alpha\chi(s/\epsilon _\alpha )\in\rho _{\II}\CI_{\log}(\hC).
\label{MetResMod.427}\end{equation}
Note that although the preimage of $s=0$ in $\hC$ has points of codimension $m+1$ the
series is constructed uniformly at the union of the $m+1$ faces of
codimension $m$ and the same asymptotic summation principles apply
to \eqref{MetResMod.427}. Moreover the error term 
\begin{equation}
h_{m-1}=h-(\Lap+2)u_m
\label{MetResMod.428}\end{equation}
is log-smooth, has expansion with powers coming only from those in the
errors in \eqref{MetResMod.387} at \emph{all} boundary hypersurfaces and
vanishes to infinite order at the preimage of $s=0.$ Thus, using a polar
partition of unity in $s$ it may be divided into $m$ pieces, each of which
vanishes to infinite order with one of the $s_i.$

This allows Proposition~\ref{MetResMod.98} to be applied, now with
$\#(L)=m-1$ and the general case to be proved by induction.
\end{proof}

\section{The curvature equation}\label{Conformal}

In the previous section we showed that the solution to the linearized
equation~\eqref{MetResMod.386} with a log-smooth forcing term is log-smooth. Now we
iteratively apply Proposition~\ref{MetResMod.94} to the full curvature
equation to arrive at the same conclusion for the solution
to~\eqref{MetResMod.388}. Note that the log-smooth functions form a ring.

\begin{proposition}\label{MetResMod.130}
The curvature equation \eqref{MetResMod.388} has a solution in log-power series
\begin{equation}
f\sim\sum\limits_{\alpha }u_{\alpha }s^\alpha ,\ u_{\alpha }\in\cU^{\alpha }
\label{MetResMod.130a}\end{equation}
where the $\cU^\alpha$ are defined by \eqref{MetResMod.423}.
\end{proposition}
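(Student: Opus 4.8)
The plan is to solve the nonlinear curvature equation \eqref{MetResMod.388} by constructing its formal log-power series solution one order at a time, using the linearized solvability of Proposition~\ref{MetResMod.94} as the workhorse at each stage. Writing the equation as $\Lap_{g_{\pl}}f + e^{2f} + R(g_{\pl}) = 0$ and linearizing about $f=0$, the principal part is exactly $(\Lap + 2)$, since $\frac{d}{df}e^{2f}\big|_{f=0}=2$. The key structural observation is that the grafted metric of Proposition~\ref{MetResMod.383} has curvature equal to $-1$ to second order at the $H_{\I}$ divisors and is the exact plumbing metric near $H_{\II}$, so the initial error term $h_0 = -1 - R(g_{\pl})$ already lies in the space $\rho_{\II}\CI_{\log}(\CH)$ with an expansion of the form \eqref{MetResMod.95}. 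This is precisely the input class required to apply Proposition~\ref{MetResMod.94}.

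First I would set up the iteration. Let $f_0 = 0$ and solve $(\Lap+2)u_1 = h_0$ via Proposition~\ref{MetResMod.94}, obtaining $u_1 \in \CI_{\log}(\CH)$ with expansion in the $\cU^\alpha$ spaces. Setting $f_1 = u_1$ and substituting into \eqref{MetResMod.388}, I would expand
\begin{equation*}
e^{2f_1} = 1 + 2f_1 + (e^{2f_1} - 1 - 2f_1),
\end{equation*}
so that the new error $h_1 = \Lap f_1 + e^{2f_1} + R(g_{\pl}) = (\Lap+2)f_1 - h_0 + (e^{2f_1}-1-2f_1)$ is governed by the quadratic-and-higher remainder $e^{2f_1}-1-2f_1 = O(f_1^2)$. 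Since log-smooth functions form a ring and $f_1$ vanishes at least to order $\rho_{\II}$, this remainder gains an extra order of vanishing and lands in a strictly deeper space in the filtration \eqref{MetResMod.424}. Continuing, at stage $n$ I would solve $(\Lap+2)u_{n+1} = -h_n$ and set $f_{n+1} = f_n + u_{n+1}$, so that the successive errors $h_n$ vanish to increasing order in the $s_i$. The formal series $\sum u_n$ then assembles, by the Borel-type asymptotic summation recalled in the proof of Proposition~\ref{MetResMod.94}, into a log-smooth $f$ solving \eqref{MetResMod.388} to infinite order, with each coefficient in the claimed space $\cU^\alpha$.

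The main obstacle I anticipate is \emph{controlling the growth of the logarithmic powers} through the nonlinear iteration. Each application of Proposition~\ref{MetResMod.94} raises the log-degree: the $\cL^\kappa \to \cU^\kappa$ mapping promotes $\cP^\kappa$ to $\cP^{\kappa+1}$ (and $\cP^{\kappa+2}_{\II}$ in the $\rho_{\I}^2$ term), reflecting the indicial-root resonances of Lemmas~\ref{MetResMod.429} and~\ref{MetResMod.430}. In a \emph{linear} problem one applies the solution operator once per order in $s$, so the log-degree at order $s^\alpha$ stays linearly bounded; but in the nonlinear scheme the products $f_n^k$ arising from $e^{2f_n}$ multiply log-polynomials together, which could in principle cause the log-degree to grow faster than the power of $s$. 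The resolution — and the reason for the careful bookkeeping in the $\cP^\kappa$, $\cP^\kappa_{\II}$, and $\cP^{\kappa+2e_j}_{\II}$ filtrations — is that at each fixed total order $s^\alpha$ only finitely many lower-order terms contribute, and the quadratic gain in $s$-vanishing from $e^{2f}-1-2f$ outpaces the additive increase in log-degree, so the degree at order $\alpha$ remains bounded by a linear function of $|\alpha|$, consistent with membership in $\cU^\alpha$. I would verify this degree bound by an induction matching the $s$-order against the accumulated log-degree, exactly as the spaces in \eqref{MetResMod.423} are designed to permit.
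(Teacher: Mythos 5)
Your proposal is correct and is essentially the paper's own argument: both iterate the linear solvability of Proposition~\ref{MetResMod.94} against the nonlinear equation, using the fact that the remainder $e^{2f}-1-2f$ is quadratic so each correction gains two orders of vanishing in $s$, with the $\cU^\alpha$/$\cL^\alpha$ bookkeeping absorbing the logarithmic growth. The only (cosmetic) difference is organizational — the paper writes $f=\sum_j f_j$ with $f_j$ vanishing to order $2j$ and a telescoped right-hand side $(\Lap+2)f_j=f_{j-1}Q_j(f_1,\dots,f_{j-1})$, whereas you recompute the full error $h_n$ at each Picard step — and the two schemes produce the same expansion.
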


\begin{proof}
To get the expansion for the nonlinear equation, we write it as
\begin{equation}\label{MetResMod.354}
(\Lap+2)f=-R-1-(e^{2f}-1-2f).
\end{equation}

Consider the formal solution of the linearized equation $(\Lap+2)f_{1}\sim
-R-1.$ from~\eqref{MetResMod.96}  
$$
f_{1}=\sum\limits_{\alpha \geq 2} s^{\alpha}f_{1,\alpha},\ f_{1,\alpha }\in\cU^\alpha .
$$
Now we solve the nonlinear equation~\eqref{MetResMod.354} by taking
\begin{equation}
f=\sum_{i=1}^{\infty} f_{i}, \ f_{i}=\sum\limits_{\alpha\geq 2i} s^{\alpha}f_{i,\alpha},
\end{equation}
which gives
$$
\sum_{i=1}^{\infty} (\Lap+2) f_{i}=-R-1 - \sum_{k=2}^{\infty} \frac{2^{k}}{k!}(\sum_{i=1}^{\infty}f_{i})^{k}.
$$
Now we require that for $j \geq 2$
$$
(\Lap+2)f_{j}=-\sum_{k=2}^{\infty}\frac{2^{k}}{k!}\left[(\sum_{i=1}^{j-1}f_{i})^{k}- (\sum_{i=1}^{j-2}f_{i})^{k} \right]
=f_{j-1}Q_{j}(f_{1}, \dots, f_{j-1})
$$
where $Q_{j}$ is a polynomial with no constant term. Assuming inductively that
\begin{equation}
f_{k}=\sum_{\alpha\geq 2k} s^{\alpha} f_{k,\alpha}, \ f_{k,\alpha} \in \mathcal{U}^{\alpha}
\end{equation} 
for $k\leq j-1$ as in~\eqref{MetResMod.130a}, the right hand side is in
$\sum_{\alpha\geq 2j} s^{\alpha}\mathcal{L}^{\alpha}.$ Applying
Proposition~\ref{MetResMod.94}, we obtain the inductive conclusion that  
$$
f_{j}=\sum_{\alpha\geq 2j} s^{\alpha} f_{j,\alpha}, \  f_{j,\alpha}\in \mathcal{U}^{\alpha}.
$$ 
This shows the existence of a formal solution as in \eqref{MetResMod.130a}.
\end{proof}

Now the structure of the actual solution follows from an application of the
Implicit Function Theorem.

\begin{proof}[Proof of Theorem~\ref{MetResMod.285}] It suffices to show that there
  is a function $f \in \CI_{\log}(\CH)$ satisfying the curvature equation
  \eqref{MetResMod.388} since then $e^{2f}g_{pl}$ is the family of
  hyperbolic metrics.

Writing $f=f_{0}+\tilde f$, where $f_{0}$ is a sum of the formal solution
obtained from Proposition~\ref{MetResMod.130} by Borel's lemma. Then
$\tilde f$ should satisfy
$$
(\Lap+2)\tilde f =-\left(2\tilde f(e^{2f_0}-1)+e^{2f_0}(e^{2\tilde
  f}-1-2\tilde f)-g\right). 
$$
We can apply Proposition~\ref{MetResMod.129} to see the solvability of this
equation in the space $s^NH^{N}_{\bo}(\hC)$ restricted to a region where at
least one of the $s_i$ is small. The uniqueness of the hyperbolic metric
shows that the $\tilde f$ is in fact smooth and vanishes to infinite order
with each of the $s_i,$ essentially as in~\cite[Proposition 8]{MetLef}.
\end{proof}

\section{The Weil--Petersson metric}\label{WP}

The Weil--Petersson metric is most easily realized at $m\in\Ms{g,n}$ in
terms of the dual metric, an Hermitian metric on the logarithmic cotangent
bundle, $\DL_m^{(1,0)}.$ The definition uses the identification
\eqref{MetResMod.299}. The bundle of quadratic logarithmic
differentials on the resolution of the marked Riemann surface $Z=Z_m$
representing $m\in\Ms{g,n}$ satisfies
$$
(\DL^{(1,0)}Z)^2\otimes(\overline{\DL^{(1,0)}Z})^2=(\DL^{(1,1)}Z)^2.
$$
The hyperbolic metric on $Z,$ complete outside the marked points and
fixed uniquely by the complex structure, has area form given in terms of a
holomorphic coordinate vanishing at a marked point 
\begin{equation}
\frac{dzd\bar z}{|z|^2(\log|z|)^2}.
\label{MetResMod.356}\end{equation}

By definition, the vector space $QZ$ of holomorphic quadratic differentials
on a punctured Riemann surface consists of those holomorphic sections of
$(\DL^{(1,0)}Z)^2$ which vanish at the marked points. Dividing the product
of one such form and the complex conjugate of another by the area form
therefore gives a continuous section of $\DL^{(1,1)}Z$ which is smooth away
from the marked points near which it has a bound
\begin{equation}
\big|\frac{q_1\bar {q}_2}{\mu_m}\big|\le C(\log|z|)^2{dzd\bar
  z},\ q_1,\ q_2\in QZ,
\label{MetResMod.357}\end{equation}
and so is integrable. This integral gives the dual Weil--Petersson metric, as an
Hermitian form, by push-forward under the Lefschetz map
\begin{equation}
\begin{gathered}
\DL^{(1,0)}_m\Ms{g,n}=QZ_m\\
\DL^{(1,0)}_m\Ms{g,n}\otimes\DL^{(0,1)}_m\Ms{g,n}\ni(q_1,\overline{q_2})\longmapsto
G_{\WP}(\zeta_1,\zeta_2)=\phi_*\left(\frac{q_1\bar{q}_2}{\mu_{m}}\right).
\end{gathered}
\label{MetResMod.358}\end{equation}
Standard proofs of the Riemann mapping theorem for marked Riemann surfaces
show the smoothness (indeed real-analyticity) of $\mu_m$ on the moduli
space and hence that $G_{\WP}$ is similarly smooth as a metric on $\Ms{g,n}.$

We proceed to analyze the behavior of the Weil--Petersson (co-)metric near a
boundary point $\bar m\in\oMs{g,n}\setminus\Ms{g,n},$ so in some
intersection of the $G'_{i},$ corresponding to the number of geodesics
which have been pinched to nodes. The fiber of $\opsi$ above $\bar m$ is a
nodal Riemann surfaces lying in one or more of the $G_i.$ The null bundle
$\oL_{\opsi},$ of the log differential of $\opsi$ again reduces to the log
tangent bundle of the nodal surface, in effect separating nodes to
punctures. However the corresponding space of holomorphic quadratic
differentials, $\oQ_{\bar m},$ which is naturally isomorphic to the
logarithmic cotangent space $\DL_{\bar m}\oMs{g,n}$ consists of those
holomorphic sections of the square of the dual bundle $(\oL_{\opsi})^{-2}$
which vanish at the marked points but at nodal points, in one of the $S_i,$
are simply required to take a consistent value, the same at the two
punctures representing the node. Thus in place of \eqref{MetResMod.357},
which again holds at the marked points, only the much weaker bound
\begin{equation}
\big|\frac{q_1\bar {q}_2}{\mu_{\bar m}}\big|\le C(\log|z|)^2\frac{dzd\bar
  z}{|z|^2},\ q_1,\ q_2\in \oQ_{\bar m}
\label{MetResMod.367}\end{equation}
generally holds near the nodes. This does not imply integrability so the
co-metric is singular at $\bar m.$

The structure of the area form at the singular fibers has been analysed in
detail, above, by passing to the real resolution
$\hpsi:\hCs{g,n}\longrightarrow \hMs{g,n};$ see
Theorem~\ref{MetResMod.342}. The compactified moduli space is resolved, to
a tied manifold, with boundary hypersurfaces replacing the exceptional
divisors with their logarithmic real blow up, as for the fixed divisors
(which are in the universal curve). Thus each boundary hypersurface has a
local defining function of the form $s_i=\ilog|z_i|$ and forms a circle
bundle over the corresponding divisor given locally by $z_i=0.$

\begin{lemma}\label{MetResMod.368} The lift of the logarithmic tangent
  bundle $\DL\oMs{g,n}$ to $\hMs{g,n}$ is naturally identified with corresponding
  (iterated) cusp bundle with local spanning sections $\frac{ds_i}{s_i^2},$
  $d\theta_i$ and $d w_j$ at any boundary face.
\end{lemma}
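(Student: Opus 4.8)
The plan is to compute the lift of $\DL\oMs{g,n}$ hypersurface by hypersurface, reducing to the local model of a single divisor and then assembling by compatibility, exactly as the surrounding constructions do. Locally near a point of a codimension-$k$ corner, $\oMs{g,n}$ carries admissible holomorphic coordinates $(z_1,\dots,z_k,\tau_*)$ in which the divisors are $\{z_i=0\}$ and, by the discussion following Definition~\ref{MetResMod.270}, the sheaf of holomorphic sections of $\DL^{1,0}\oMs{g,n}$ is spanned by $\frac{dz_i}{z_i}$ and $d\tau_j$. The real logarithmic resolution $\beta:\hMs{g,n}\to\oMs{g,n}$ of Definition~\ref{MetResMod.274} introduces, in place of $z_i=r_ie^{i\theta_i}$, the variables $s_i=\ilog r_i = 1/\log(1/r_i)$ together with the angular coordinates $\theta_i$, so the task is purely to express $\frac{dz_i}{z_i}$ in the lifted coordinates $(s_i,\theta_i,w_j)$ and read off a spanning frame.

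First I would carry out the one-variable computation. Writing $z=re^{i\theta}$ one has $\frac{dz}{z}=\frac{dr}{r}+i\,d\theta$, and with $s=\ilog r$, i.e.\ $\log(1/r)=1/s$, differentiation gives $\frac{dr}{r}=-d(1/s)=\frac{ds}{s^2}$. Hence
\begin{equation}
\frac{dz}{z}=\frac{ds}{s^2}+i\,d\theta,
\label{MetResMod.368pf1}
\end{equation}
which is precisely a spanning section of the (single) cusp bundle with real frame $\frac{ds}{s^2},\,d\theta$. Since the divisors are normally intersecting, the resolution is a transversal product near the corner and \eqref{MetResMod.368pf1} holds simultaneously in each $z_i$; together with the unchanged tangential differentials $d\tau_j$, whose real and imaginary parts furnish the $dw_j$, this exhibits $\beta^*\DL^{1,0}\oMs{g,n}$ as spanned by $\frac{ds_i}{s_i^2}$, $d\theta_i$ and $dw_j$, which is by definition the (iterated) cusp cotangent bundle. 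The key point making this a genuine bundle identification rather than a mere local coordinate formula is that the frame \eqref{MetResMod.368pf1} transforms correctly under the admissible changes of coordinates.

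The main obstacle is therefore verifying invariance under the allowed change of defining function $z'=\alpha z(1+z\beta(z))$ recorded in \eqref{MetResMod.315}. I would invoke Proposition~\ref{BunDef}: the identity \eqref{MetResMod.316} shows $\log z'=\log z+\log\alpha+\log(1+z\beta)$ with the last term vanishing to infinite order in $\rho=\ilog|z|$, so $s'=\ilog|z'|=s+O(s^2)$ and the angular variable is altered only to infinite order. Differentiating, $\frac{dz'}{z'}=\frac{dz}{z}+d\log(1+z\beta)$, and since $d\log(1+z\beta)$ is a smooth one-form vanishing to infinite order at the boundary after lifting, it is absorbed into the cusp frame without changing the spanning property; the leading behaviour $\frac{ds_i}{s_i^2}+i\,d\theta_i$ is preserved exactly because the admissible transition functions preserve the cusp structure to leading order. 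This is exactly the mechanism already used in Proposition~\ref{MetResMod.411} to produce the hermitian structures on the normal bundles, so the identification is natural in the sense that it is independent of the choice of admissible coordinates, completing the proof.
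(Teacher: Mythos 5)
Your proposal is correct and its core step is exactly the paper's proof: the paper disposes of the lemma with the single computation $\frac{dz_i}{z_i}=\frac{d|z_i|}{|z_i|}+i\,d\theta_i=\frac{ds_i}{s_i^2}+i\,d\theta_i$, $s_i=\ilog|z_i|$, which is your equation for the one-variable model. The additional verification of invariance under admissible changes of defining function is sound but is material the paper has already packaged into Proposition~\ref{BunDef}, so it is elaboration rather than a different route.
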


\begin{proof} This is just the computation 
\begin{equation}
\frac{dz_i}{z_i}=\frac{d|z_i|}{|z_i|}+id\theta=\frac{ds_i}{s_i^2}+id\theta,\
s_i=\ilog|z_i|=(\log|z_i|^{-1})^{-1}. 
\label{MetResMod.369}\end{equation}
\end{proof}

The preimage in $\hMs{g,n}$ of $\bar m\in\oMs{g,n}$ lying in a $k$-fold
intersection of the divisors, $G'_{i'},$ is a product of $k$ circles
through a boundary point of codimension $k$ in $\hMs{g,n}.$ The preimage in
$\hCs{g,n}$ of the singular fiber $Z_{\bar m}\subset\oCs{g,n}$ above $\bar
m$ is a product over this $k$-torus with a factor which is an `articulated
manifold' (the real analog of a nodal surface) in the sense that it is a
union of compact surfaces with boundaries meeting only at (some) of their
boundary faces $\hZ=Z_{\I}\cup Z_{\II}.$ The component manifolds forming $Z_{\I}$
are resolutions of the Riemann surfaces (including `bubbled off spheres')
into which the original Riemann surface has decomposed under nodal
degeneration. These can (and in the case of spheres must) have boundary
faces formed by the fixed divisors, with collective boundary defining
function $\rho _{\cF};$ the other boundary faces arise from the resolution
of the (separated) nodes which are in common with components in $Z_{\II}.$
The $Z_{\II}$ are all cylinders, joining the two circles forming the
resolution of a node.

The hyperbolic fiber metric of the original Riemann surface induces the
hyperbolic fiber metric on each of the components of $Z_{\I}$ (which are
necessarily stable) which is therefore of the form \eqref{MetResMod.356}
near its boundaries. The fiber metrics degenerate at the $Z_{\II}$ in an
adiabatic fashion; namely the metric approaches the pull back of metric on
the base of the circle fibration with the tangential part vanishing to
second order.

For the resolved universal curve, $\hpsi:\hCs{g,n}\longrightarrow
\hMs{g,n},$ as shown above, is a real Lefschetz map in the sense that it is
a b-fibration, so has surjective b-differential and the defining functions
on the base each lift to be everywhere either locally a defining function
or the product of two
\begin{equation}
\hpsi^*s_{i}=\rho_{\I,i}\rho _{\II,i}.
\label{MetResMod.363}\end{equation}
The collective boundary hypersurfaces $H_{\I}=\bigcup_i\{\rho _{\I,i}=0\}$ and
$H_{\II}=\bigcup_i\{\rho _{\II,i}=0\}$ are without self-intersections
(because in the case of $H_{\I}$ these have been replaced by intersections
with components of $H_{\II}$ through the blow up of the $S_i)$ and are the
unions of the parts of the fibers just described.

\begin{lemma}\label{MetResMod.364} If $q_1,$ $q_2$ are families of
  holomorphic logarithmic quadratic differentials on $\hCs{g,n},$ near a
  boundary fiber, then if $\nu_{\hpsi}$ is a positive section of the b-fiber
  density bundle for $\hpsi,$
\begin{equation}
\frac{q_1\bar q_2}{\mu_H}=a\rho
_{\II}^{-3}\rho_{\cF}^{\infty}\nu_{\hpsi},\ 0<a\in\CI_{\log}(\hCs{g,n})
\label{MetResMod.365}\end{equation}
\end{lemma}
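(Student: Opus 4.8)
The plan is to analyze the ratio $\frac{q_1\bar q_2}{\mu_H}$ locally near each of the three types of boundary hypersurfaces of $\hCs{g,n}$ separately, using the explicit normal forms established earlier, and then patch. First I would treat the fixed hypersurfaces $F_*$: here both $q_1,q_2$ are holomorphic logarithmic quadratic differentials \emph{vanishing} at the marked points (by the definition of $\oQ$ in \eqref{MetResMod.336}), so in the admissible fiber coordinate $z$ near a marked point we have $q_i = q_i'(dz/z)^2$ with $q_i'$ vanishing at $z=0$; combined with the cusp area form \eqref{MetResMod.356}, exactly as in the pairing computation of Proposition~\ref{MetResMod.349} (see \eqref{MetResMod.351}), the ratio acquires the extra vanishing that produces the factor $\rho_{\cF}^\infty$. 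The point is that the vanishing of $q_i'$ at the fixed divisors, together with rotational flatness of the hyperbolic metric from Theorem~\ref{MetResMod.285}, forces infinite-order vanishing at $F_*$.

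Next I would examine the interior of $H_{\I}$, away from $H_{\II}$. There the fiber is a stable punctured Riemann surface carrying its hyperbolic cusp metric, the differentials $q_i$ restrict to genuine holomorphic quadratic differentials, and the ratio $\frac{q_1\bar q_2}{\mu_H}$ is a smooth (log-smooth, after including the base parameters via Theorem~\ref{MetResMod.285}) positive fiber density with no singular weight. The substantive computation is at the necks $H_{\II}$. Here I would use the Lefschetz normal form \eqref{MetResMod.319}, so that near a node $q_i = q_i'(dz/z + dw/w)^2$ with $q_i'$ taking a consistent nonzero value at the node $z=w=0$ (the consistent-double-residue condition), while the hyperbolic metric degenerates adiabatically to the plumbing/cusp model. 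Writing $\mu_H$ from the plumbing metric \eqref{MetResMod.30} and passing to the resolved radial variable $\rho_{\II}$, the logarithmic quadratic differential behaves like $(d\theta)^2$ scale $\sim s^{-2}$ relative to the neck while the area form has the $\sin^2(\pi s/s_w)$ factor; tracking the powers of $\rho_{\II}$ against the b-fiber density $\nu_{\hpsi}$ yields precisely the weight $\rho_{\II}^{-3}$.

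The main obstacle I anticipate is pinning down the exact exponent $-3$ (rather than merely a bound) at $H_{\II}$ and confirming that the coefficient $a$ is \emph{strictly positive} and genuinely log-smooth up to the corner $H_{\I}\cap H_{\II}$, not just conormal. This requires using the full log-smooth structure of $\mu_H$ from Theorem~\ref{MetResMod.285} — in particular its rotational flatness and its expansion in the resolved coordinates — to show that the two leading (most singular) contributions from the two local sheets $z=0$ and $w=0$ combine with the correct sign and do not cancel, paralleling the cancellation phenomenon observed in the pairing in Proposition~\ref{MetResMod.349}. Concretely I would compute the leading term of $q_1\bar q_2$ in the resolved variables, divide by the leading term of $\mu_H$ read off from \eqref{MetResMod.30}, and verify both the homogeneity degree in $\rho_{\II}$ and positivity of the resulting top coefficient; the consistency of the double residues guarantees that the neck contribution is a single well-defined positive constant times $\rho_{\II}^{-3}\nu_{\hpsi}$. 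Once the three local models and their matching at corners are in hand, log-smoothness of $a$ globally follows from the log-smoothness of $\mu_H$ and the holomorphy of $q_1,q_2$, completing the proof.
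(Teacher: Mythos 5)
Your proposal is correct and takes essentially the same approach as the paper's proof: at the fixed hypersurfaces the holomorphic vanishing of the $q_i$ at the marked points becomes an exponential factor ($e^{-2/\rho_{\cF}}$) in the logarithmic variables, yielding $\rho_{\cF}^{\infty}$, while at the necks the non-vanishing consistent double residue is divided against the area form--whose structure follows from Lemma~\ref{MetResMod.368} together with the log-smoothness of the fiber metric (Theorem~\ref{MetResMod.285}), i.e.\ the plumbing model up to a positive log-smooth conformal factor--giving exactly the weight $\rho_{\II}^{-3}$ relative to the b-fiber density. Your one anticipated obstacle, a possible cancellation between the two local sheets, is not actually an issue for this pointwise density statement (the cancellation in Proposition~\ref{MetResMod.349} concerns the integral pairing, not the ratio), and the direct computation in resolved coordinates that you propose is precisely what settles the exponent and the positivity.
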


\begin{proof} This is a refinement of the computation above leading to
  \eqref{MetResMod.357}, \eqref{MetResMod.367}. Near the fixed
  hypersurfaces the metric has a cusp singularity so the fiber area form is  
\begin{equation}
\mu_H=\alpha \frac{d\rho_{\cF}d\theta_{\cF}}{\rho_{\cF}},\ 0<\alpha \in\CI_{\log}.
\label{MetResMod.371}\end{equation}
The product of the holomorphic quadratic differentials vanishes in terms of
the holomorphic coordinates so 
\begin{equation}
q_1\bar q_2=e^{-2/\rho
  _{\cF}}b(\frac{d\rho_{\cF}d\theta_{\cF}}{\rho_{\cF}^2})^2,\ b\in\CI.
\label{MetResMod.372}\end{equation}
The quotient is therefore rapidly decreasing at the fixed boundary
hypersurfaces, giving the formal factor of $\rho_{\cF}^{\infty}$ in
\eqref{MetResMod.365}. 

It remains to analyse the behaviour at $Z_{\II},$ including at the
corresponding boundary faces of $Z_{\I}.$ These cover the nodal points at
which the holomorphic quadratic differentials do not necessarily
vanish. The structure of the area form is essentially the same as in
\eqref{MetResMod.371} with $\rho_{\II}$ replacing $\rho_{\cF}$ but extends
along $H_{\II};$ Lemma~\ref{MetResMod.368} (and the uniform analysis of the
metric) shows that
\begin{equation}
\mu_H=\alpha \frac{d\rho_{\II}d\theta_{\II}}{\rho_{\II}},\ 0<\alpha \in\CI_{\log}.
\label{MetResMod.373}\end{equation}
The exponential factor in \eqref{MetResMod.372} is then missing, so 
\begin{equation}
q_1\bar q_2=b\rho_{\II}^{-4}(d\rho_{\II}d\theta_{\II}))^2,\ b\in\CI.
\label{MetResMod.374}\end{equation}
and \eqref{MetResMod.365} follows.
\end{proof}

The holomorphic quadratic differentials which vanish at a nodal point (so in
some $S_i)$ correspond to the `tangential' (logarithmic) differentials on
$\oMs{g,n}$ at that point, those which are smooth up to the divisor
(although possibly singular along it as logarithmic differentials). If
either $q_1$ or $q_2$ lies in this subspace then at least one exponentially
vanishing factor occurs and \eqref{MetResMod.365} is
replaced by
\begin{equation}
\frac{q_1\bar q_2}{\mu_H}=a\rho
_{\II,i}^{\infty}\rho_{\cF}^{\infty}\nu_{\hpsi},\ 0<a\in\CI_{\log}(\hCs{g,n})\Mnear H_{\II,i},
\label{MetResMod.366}\end{equation}
the corresponding component of $Z_{\II}.$ In this case the area form is locally integrable
whereas in the `normal' case, \eqref{MetResMod.365} it is singular at $Z_{\II}.$

\begin{proof} [Proof of Theorem~\ref{MetResMod.375}] As noted in
  \eqref{MetResMod.358}, the Weil--Petersson metric, defined through the dual
  metric on $\DL^{(1,0)}M,$ is given by push forward under the Lefschetz
  map $\psi.$ Lifting under the metric resolution, it follows that 
\begin{equation}
G_{\WP}(\zeta _1,\zeta _2)=\hpsi_*(\frac{q_1\bar q_2}{\mu_m})
\label{MetResMod.376}\end{equation}
where the $q_i$ are holomorphic quadratic differentials representing the
$\zeta _i.$ Here we may think of the $\zeta_i$ as holomorphic sections of
$\DL^{(1,0)}\oMs{g,n}$ near some point $\bar m$ and the $q_i$ as the
corresponding sections of $\oQ\oMs{g,n},$ hence holomorphic quadratic
differentials near the fiber above $\bar m.$ Since $\hpsi$ is a b-fibration
the push-forward theorem in \cite{MR93i:58148} applies. In principal this
is in the context of manifolds with corners, rather than the slightly more
general case of tied manifolds with orbifold points as encountered
here. However the fibers \emph{are} globally manifolds with boundaries and
the result is essentially local in the base. The same remark applies to the
presence of orbifold points since these are also in the base directions, so
one can always apply the result in \cite{MR93i:58148} to an appropriate
finite local cover and then take the quotient.

\begin{proposition}[See \cite{MR93i:58148}]\label{MetResMod.377} If
  $\hphi:\widehat{C}\longrightarrow \widehat{M}$ is a b-fibration with
  compact fibers, between manifolds with corners, with multiplicity at most
  two, in the sense that each boundary defining function in the range is locally the
  product of at most two boundary defining functions in the domain and
  $\nu_{\hphi}$ is a non-vanishing fibre b-density then
\begin{equation}
\CI_{\log}(\widehat{C})\ni a\longmapsto
\hphi_*(a\nu_{\phi})=g+\sum\limits_{i'}a_{i'}\log\rho _{i'},\ g,\ a_{i'}\in
\CI_{\log}(\widehat{M}).
\label{MetResMod.378}\end{equation}
If $a$ vanishes to order $j$ at the codimension two faces occurring
as the common zero surface in \eqref{MetResMod.363} for $\hphi^*\rho_{i'}$
then the corresponding coefficient $a_{i'}$ vanishes to order $j$ at $\rho_{i'}=0.$ 
\end{proposition}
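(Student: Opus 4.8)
The plan is to obtain this as the log-smooth refinement of the b-pushforward theorem of \cite{MR93i:58148}, reducing everything to a single explicit model integral and then reading off both the structure \eqref{MetResMod.378} and the vanishing refinement from that computation. First I would localize. Choosing a partition of unity on $\widehat M$ and lifting it through $\hphi$, and then cutting off further on $\widehat C$ using compactness of the fibers, it suffices to analyse the pushforward in each local coordinate model. Away from the codimension-two faces of \eqref{MetResMod.363}, the b-fibration $\hphi$ is locally a projection off a compact fiber (composed with boundary-preserving factors); there the fiber b-integral of a log-smooth density is manifestly log-smooth, since one may differentiate under the integral sign and the fibers are compact manifolds with boundary. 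Hence all of the new behaviour — in particular the $\log\rho_{i'}$ terms — is concentrated at the multiplicity-two corners, and the multiplicity-two hypothesis guarantees that these are the only corners that matter.

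The heart of the argument is therefore the single model $\hphi^* t=\rho_1\rho_2$, where I abbreviate $\rho_1=\rho_{\I,i'}$, $\rho_2=\rho_{\II,i'}$ and $t=\rho_{i'}$, with the remaining tangential and fiber variables riding along as smooth parameters. The fiber b-density $\nu_{\hphi}$ is a nonvanishing multiple of $d\rho_1/\rho_1$ (using $dt/t=d\rho_1/\rho_1+d\rho_2/\rho_2$), so the pushforward is the integral of $a\,d\rho_1/\rho_1$ over the fiber $\{\rho_1\rho_2=t\}$, parametrised by $\rho_1$ with $\rho_2=t/\rho_1$. A model term $\rho_1^p\rho_2^q(\log\rho_1)^k$ from the log-smooth expansion of $a$ restricts on the fiber to $t^q\rho_1^{p-q}(\log\rho_1)^k$; integrating $d\rho_1/\rho_1$ over the (cut-off) fiber gives a genuine log-smooth contribution in $t$ when $p\neq q$, but produces a logarithmic term, of the form $t^p(\log(1/t))^{k+1}$ up to lower order, precisely when $p=q$. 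Organising this by the Mellin transform in $t$ makes the bookkeeping transparent: the two ends $\rho_1\to 0$ and $\rho_2\to 0$ contribute meromorphic terms whose simple poles assemble into $g\in\CI_{\log}(\widehat M)$, while a pole becomes double exactly when an exponent from the $\rho_1$-end coincides with one from the $\rho_2$-end — this is the collision of the two domain faces onto $t=0$ — and the double poles assemble into the $a_{i'}\log\rho_{i'}$ term. Because $a$ is log-smooth its log-powers grow at most linearly and each collision raises the log-power by exactly one, so $g$ and the $a_{i'}$ remain log-smooth; it is the multiplicity-two bound that caps this increment at one and keeps us inside $\CI_{\log}(\widehat M)$ rather than a class with faster-growing logarithms.

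For the refinement I would read the coefficient of $\log\rho_{i'}$ off the residue computation above: it is the ``diagonal part'' of $a$ at the corner, namely the sum over $p=q$ of the coefficients of $\rho_1^p\rho_2^q$, re-expressed as a function of $t=\rho_{i'}$. If $a$ vanishes to order $j$ for $\hphi^*\rho_{i'}=\rho_1\rho_2$, so that $a=O((\rho_1\rho_2)^j)$, then every surviving diagonal term has $p=q\geq j$ and hence carries a factor $t^{p}$ with $p\geq j$; consequently $a_{i'}$ vanishes to order $j$ at $\rho_{i'}=0$, as claimed.

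Finally I would reassemble the global statement. The local computations at distinct boundary faces are formally compatible, and since the components of $H_{\II}$ never vanish simultaneously and $H_{\I}$, $H_{\II}$ meet only along the codimension-two surfaces of \eqref{MetResMod.363}, at most one collision occurs at any point, so no iterated logarithms appear. A Borel-type asymptotic summation — exactly the one recalled in the proof of Proposition~\ref{MetResMod.94} — then patches the face-by-face expansions into a single $g\in\CI_{\log}(\widehat M)$ together with the finite sum $\sum_{i'}a_{i'}\log\rho_{i'}$. The main obstacle I anticipate is not conceptual but is the uniform control of the logarithmic powers through this summation: one must verify that the linear growth of log-powers is preserved and that the multiplicity-two hypothesis genuinely forbids higher-order accumulation of $\log$ factors, which is exactly what keeps the pushforward log-smooth.
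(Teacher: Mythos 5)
You should first be aware that the paper contains no proof of Proposition~\ref{MetResMod.377} to compare against: it is quoted from \cite{MR93i:58148}, followed only by the remark that the logarithmic multiplicity increases by at most one and that the new logarithm arises from the diagonal terms of the expansion at the codimension-two corners. What your sketch reconstructs is, in outline, the proof of the cited theorem itself: localization, reduction to the model $\hphi^*t=\rho_1\rho_2$ with fiber b-density $d\rho_1/\rho_1$, Mellin-transform bookkeeping in which collisions of exponents from the two ends raise the pole order, and Borel resummation. Your central computation --- that $\rho_1^p\rho_2^q(\log\rho_1)^k$ restricted to $\{\rho_1\rho_2=t\}$ integrates to something log-smooth in $t$ unless $p=q$, in which case exactly one extra power of $\log t$ appears --- is precisely the mechanism the paper's remark describes, and your reading of the refinement ($a=O((\rho_1\rho_2)^j)$ forces every surviving diagonal term to have $p=q\ge j$, hence $a_{i'}=O(\rho_{i'}^j)$) is the intended one.

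There is, however, one step that fails as written, and it corresponds to a hypothesis of the theorem in \cite{MR93i:58148} which both you and the paper's loose statement have dropped. Away from the corners you assert that the fiber b-integral of a log-smooth density is ``manifestly log-smooth \ldots\ since the fibers are compact manifolds with boundary.'' A b-density is not integrable at a boundary of the fiber: $\int_0 d\rho/\rho$ diverges. The boundary hypersurfaces of $\widehat{C}$ split into those mapping onto boundary hypersurfaces of $\widehat{M}$ (the ones entering your model computation) and the ``fiber-like'' ones mapping onto all of $\widehat{M}$; in the application these are the fixed hypersurfaces $\cF$, which meet every fiber. The cited theorem requires positivity (integrability) of the index sets at exactly those faces, and without it the pushforward need not exist at all --- take $a\equiv1$ on $[0,1)\times\widehat{M}\to\widehat{M}$ and the integral is identically infinite, despite the multiplicity being one. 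In the paper this hypothesis is supplied by the factor $\rho_{\cF}^{\infty}$ in Lemma~\ref{MetResMod.364}; your argument goes through once you add it (or require $a$ to vanish at such faces). A smaller bookkeeping point: a collision of terms $\rho_1^l(\log\rho_1)^{j_1}\rho_2^l(\log\rho_2)^{j_2}$ produces log powers up to $j_1+j_2+1\le 2l+1$ at order $\rho_{i'}^l$, so the poles are not simple and the coefficients $g$, $a_{i'}$ satisfy linear growth of log powers only with slope two, as in the introduction's definition of log-smoothness rather than the normalization \eqref{MetResMod.198} of the appendix; your phrase ``raises the log-power by exactly one'' undercounts this.
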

\noindent
In brief the logarithmic multiplicity in the generalized Taylor series at
boundary faces in the range of such a push-forward is at most one degree
higher, there is at most one more factor of $\log\rho _{i'},$ and this only
arises from the Taylor series at the codimension two faces mapping onto a
given boundary hypersurface. One can be more precise about the sense in
which it is the diagonal terms in the Taylor series at the corners which
contribute to the logarithmic coefficients.

Near a point in the local intersection of $k$ exceptional divisors in the
base, $\oMs{g,n},$ we may always choose a local coordinate basis of $\DL^{(1,0)},$
$\zeta_{i'}=dz_{i'}/z_{i'},$ $i'=1,\dots,k,$ $\zeta'_j=dw_j,$ over an open neighborhood
$O\subset\oMs{g,n}$ so that the corresponding holomorphic quadratic
differentials are $q_{i'}$ and $q'_j,$ where each $q_{i'}$ vanishes at all $S_{l'}$ with
$l'\not=i'$ and takes the value $1$ at $S_{i'}$ and the $q'_j$ are
tangential in the sense that they vanish all local $S_{l'}.$ Thus the
$q'_{j}$ are holomorphic quadratic differentials when the nodal points are
separated and regarded as marked points on the resulting possibly
non-connected Riemann surface.

Applying Proposition~\ref{MetResMod.377} to compute the coefficients of the
metric via \eqref{MetResMod.358} using Lemma~\ref{MetResMod.364} and the
subsequent remark we conclude that locally
\begin{equation}
\begin{gathered}
G_{\WP}(\frac{dz_{i'}}{z_{i'}},\frac{dz_{i'}}{z_{i'}})\in \rho _{i'}^{-3}\CI(\widehat O)+\log\rho _{i'}\CI_{\log}(\widehat O),\\
G_{\WP}(\frac{dz_{i'}}{z_{i'}},\frac{dz_{j'}}{z_{j'}})\in \CI_{\log}(\widehat O),\ i'\not=j'\\
G_{\WP}(dw_j,dw_k)\in \CI_{\log}(\widehat O)\ \forall\ j,k
\end{gathered}
\label{MetResMod.379}\end{equation}
where $\widehat{O}$ is the preimage of $O$ in $\hMs{g,n}.$ 

The singular coefficients in \eqref{MetResMod.379} arise only from the
boundary hypersurface $H_{\II,i'}$ resolving $S_{i'}$ in $\hCs{g,n}.$
Consider the leading terms in the length, with respect to the
Weil--Petersson co-metric, of $dz_{i'}/z_{i'},$ dropping the
index for notational simplicity. This is given by the push-forward formula.
Since we have shown that they differ by quadratic terms at the divisors, it
suffices to replace the Weil--Petersson metric by the grafting metric in the
computation of the leading terms. The explicit computation of the integral
depends on the fibration being in model Lefschetz form and the metric
reducing to the plumbing metric near the nodal points. To accomplish this
we choose $z$ to be fiber holomorphic but only smooth in the base,
i.e.\ arising from a non-holomorphic defining function for the
corresponding $G'_i.$

Now the integral of the fiber area form which may be written out explicitly
as
\begin{equation}
\int_{|t|<|z|<1}\mu_s
\label{MetResMod.242}\end{equation}
where $\mu_s$ is the quotient of $|dz|^4/|z|^4$ -- the square of a
quadratic differential with a double pole -- and the area form of the
plumbing metric. So,  
\begin{equation}
\mu_s=\frac{|dz|^2}{|z|^2}(\log|z|)^2
\left(\frac{\log|t|}{\pi\log|z|}\sin\frac{\pi\log|z|}{\log|t|}\right)^2.
\label{MetResMod.243}\end{equation}
So, setting $s=-1/\log|t|,$ $r=-1/\log|z|$ we find 
\begin{equation}
\begin{gathered}
\frac{|dz|^2}{|z|^2}=\frac{drd\theta}{r^2},\\
\mu_s=\frac{drd\theta}{r^4}\left(\frac r{\pi s}\sin\frac{\pi s}r\right)^2.
\end{gathered}
\label{MetResMod.244}\end{equation}

The integral then becomes 
\begin{equation}
2\pi\int_s^1\frac{dr}{\pi^2s^2r^2}\sin^2\frac{\pi s}r.
\label{MetResMod.245}\end{equation}
Changing variable to $\tau=s/r,$ so $dr/r^2=-d\tau/s,$ gives 
\begin{equation}
\frac1{\pi s^3}\int_s^1(1-\cos 2\pi\tau)d\tau=
\frac{1}{\pi s^3}(1-s+\frac{\sin{2\pi s}}{2\pi}).
\label{MetResMod.246}\end{equation}
The higher order terms in the coefficients of these diagonal terms arise
from either the difference of the conformal factor for the degenerating
family of metrics, which may give a term in $\log \rho
_{i'}\CI_{\log}(\widehat{O}),$ and the higher order terms in the quadratic
differential which produce a term in $\CI_{\log}(\widehat{O}).$

To obtain the form \eqref{MetResMod.369} of the metric we must change from
the non-holomorphic defining functions for the divisors to holomorphic
ones. As discussed in Proposition~\ref{BunDef} this only changes the real
boundary defining functions $s_j$ by quadratic terms and the given
decomposition of the metric is not altered by such changes.
\end{proof}

\section{Ricci curvature and metric}\label{Ricci}

The Ricci curvature of the Weil--Petersson metric itself defines a K\"ahler
metric on the moduli space; the quasi-isometry class was found by Trapani
\cite{MR1176026} and the leading asymptotics at a divisor by Liu, Sun and
Yau \cite{MR2169586,MR2144543}. Near the intersection of $k$ divisors, written out in terms of the singular coordinate basis 
\begin{equation}\label{MetResMod.412}
\alpha _j=d\log
z_i=d(-s_i^{-1}+i\theta_i),
1\le i\le k;\
\alpha_j=dz_j, 3g-3+n\geq j>k
 \end{equation}
the full asymptotic expansion of the
Weil--Petersson metric in turn yields a full description of the asymptotic
behaviour of the Ricci metric at the exceptional divisors:

\begin{theorem}\label{MetResMod.389} In terms of the coordinates $s_i,$
  $\theta_i$ and $z_l$ near an intersection of exceptional divisors, the Ricci metric
derived from the Weil--Petersson metric is $\theta_i$-invariant to infinite
order at $s_i=0,$ has log-smooth coefficients as an Hermitian form in
$-ds_i^{-1}+id\theta_i$ and $dz_j$ and in this sense has leading part
\begin{equation}
g_{\text{Ri}}=\frac34\sum\limits_{i=1}^k\left(\frac{ds_i^2}{s_i^2}+s_i^2d\theta_i^2\right)+h
\label{MetResMod.390}\end{equation}
where $h$ is log-smooth and restricts to the exceptional divisor to be the
induced Ricci metric.
\end{theorem}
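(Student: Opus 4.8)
The plan is to read off the Ricci metric from the K\"ahler identity $\mathrm{Ric}(\omega_{\WP}) = -i\partial\bar\partial\log\det(g_{\WP})$, using the full expansion of the Weil--Petersson metric from Theorem~\ref{MetResMod.375} as input and then applying the singular holomorphic derivatives $\partial,\bar\partial$ in the coordinates $z_i = \exp(-s_i^{-1}+i\theta_i)$. First I would rewrite the leading part of \eqref{MetResMod.301} as a Hermitian form in the singular basis $\alpha_i = d\log z_i = d(-s_i^{-1}+i\theta_i)$ of \eqref{MetResMod.412}. Since $|\alpha_i|^2 = s_i^{-4}ds_i^2 + d\theta_i^2$, the normal block satisfies $s_i\bigl(\tfrac{ds_i^2}{s_i^2}+s_i^2d\theta_i^2\bigr) = s_i^3|\alpha_i|^2$, so in this frame $g_{\WP}$ has diagonal leading coefficient $g_{i\bar i} = \pi s_i^3 + (\text{lower order})$, normal--tangential cross terms that are $O(s_j^4)$ by the decay statement in \eqref{MetResMod.301}, and a tangential block that is log-smooth and restricts on the corner to the Weil--Petersson metric of the $k$-fold intersection of divisors.

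Next I would expand the determinant. By the block structure just described, $\det(g_{i\bar j}) = \bigl(\prod_{i=1}^k \pi s_i^3\bigr)\,D$, where $D \in \CI_{\log}$ is positive, $\theta$-invariant to infinite order, and restricts to the determinant of the tangential Weil--Petersson metric; the cross terms enter $D$ only at higher order in the $s_i$. Taking logarithms gives $\log\det(g_{\WP}) = 3\sum_{i=1}^k \log s_i + L$ with $L \in \CI_{\log}$. The singular part is then governed entirely by $\partial\bar\partial\log s_i$: writing $\xi_i = \log z_i$ so that $\xi_i + \bar\xi_i = 2\log|z_i| = -2/s_i$, one computes exactly $\partial_{\xi_i}\bar\partial_{\bar\xi_i}\log s_i = (\xi_i+\bar\xi_i)^{-2} = s_i^2/4$, that is $\partial\bar\partial\log s_i = \tfrac14 s_i^2\,\alpha_i\bar\alpha_i$, which as a real Hermitian form is $\tfrac14\bigl(\tfrac{ds_i^2}{s_i^2}+s_i^2d\theta_i^2\bigr)$. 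Hence, with the sign convention making the Ricci metric positive (the Weil--Petersson Ricci curvature being negative), $-\partial\bar\partial\bigl(3\sum_i\log s_i\bigr)$ produces precisely the leading term $\tfrac34\sum_{i=1}^k\bigl(\tfrac{ds_i^2}{s_i^2}+s_i^2d\theta_i^2\bigr)$ of \eqref{MetResMod.390}; the factor $\tfrac34$ is the product of the exponent $3$ (the power of $s_i$ in the leading Weil--Petersson coefficient) and the $\tfrac14$ coming from $\partial\bar\partial\log s_i$.

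For the remainder $h$ I would apply $-\partial\bar\partial$ to $L$. The structural point is that $\partial\bar\partial$ preserves log-smoothness in these singular coordinates: since $z_i\partial_{z_i} = \tfrac12\bigl(s_i^2\partial_{s_i} - i\partial_{\theta_i}\bigr)$, and $\partial_{\theta_i}$ annihilates $L$ to infinite order by $\theta$-invariance (rotational flatness, as in Proposition~\ref{BunDef}), while $s_i^2\partial_{s_i}$ maps $\CI_{\log}$ into itself with a gain of one power of $s_i$, the coefficients of $\partial\bar\partial L = \sum_{i,j}\bigl((z_i\partial_{z_i})(\bar z_j\bar\partial_{\bar z_j})L\bigr)\,\alpha_i\bar\alpha_j$ are log-smooth and $\theta$-invariant to infinite order. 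Restriction to the divisor is then immediate: $L$ restricts to $\log\det$ of the tangential Weil--Petersson metric and the normal derivatives drop out in the limit, so $h$ restricts to $-i\partial\bar\partial$ of that determinant, i.e.\ to the induced Ricci metric on the divisor.

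I expect the main obstacle to be the bookkeeping in this last step: keeping track of the logarithmic order and the powers of $s_i$ produced when $\partial\bar\partial$ acts on $L$ and on the subprincipal terms of the expansion, and confirming that none of these feed back into the leading $\tfrac34 s_i^2|\alpha_i|^2$ term or disturb the claimed restriction to the divisor. This requires the precise closure properties of $\CI_{\log}$ under the singular holomorphic derivatives together with the off-diagonal decay in \eqref{MetResMod.301}.
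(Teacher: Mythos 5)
Your proposal is correct and follows essentially the same route as the paper's own proof: expand $\det(g_{\WP})$ in the singular frame $\alpha_i=dz_i/z_i$ to extract the factor $\prod_{i}\pi s_i^3$, observe that the singular part of $\log\det(g_{\WP})$ is $3\sum_i\log s_i$ (up to pluriharmonic and log-smooth terms), compute $\partial\bar\partial\log s_i=\tfrac14 s_i^2\,\alpha_i\bar\alpha_i$, and identify the restriction of the log-smooth remainder with the induced Ricci metric on the divisor. The only blemish is a sign slip: given your own (correct) computation $\partial\bar\partial\log s_i=+\tfrac14 s_i^2|\alpha_i|^2$, the positive Ricci metric is $+\partial\bar\partial\log\det(g_{\WP})$ (i.e.\ $-\mathrm{Ric}$ of the Weil--Petersson metric), so the leading term arises from $+\partial\bar\partial\bigl(3\sum_i\log s_i\bigr)$ rather than $-\partial\bar\partial\bigl(3\sum_i\log s_i\bigr)$ --- a convention confusion which the paper itself shares and which does not affect the stated result.
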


\begin{proof} In terms of the coordinates in~\eqref{MetResMod.412}, the
  Weil--Petersson metric $g_{\WP}$ is as in \eqref{MetResMod.301}. Computed in terms of the these complex differentials, the determinant of
the metric takes the form
\begin{equation}
\begin{gathered}
\det(g_{\WP})
=(\prod_{i=1}^{k} \pi\frac{1}{(\log |z_{i}|)^{3}|z_{i}|^{2}} )\det(g_{\WP,z})\left(1+
\sum\limits_{i=1}^k(\ilog|s_{i}|)f_{i}\right)\\
\log\det(g_{\WP})
=C+\sum\limits_{i=1}^k -3i\log\log|z_{i}|  +\log \det g_{\WP,z} +
\sum_{i} \ilog|s_{i}|\tilde f_{i} 
\end{gathered}
\label{MetResMod.393}\end{equation}
where the $f_{i}$ and $\tilde f_{i}$ are log-smooth with respect to the $z_{i}$ variable.

Since $-\log\det(g_{\WP})$ is a K\"ahler potential for the Ricci metric the
leading normal part of the metric is
\begin{equation}
\frac{3}{4}\sum\limits_{i} \frac{|dz_i|^2}{(\log|z_{i}|)^{2}|z_{i}|^{2}}
\label{MetResMod.395}
\end{equation}
Changing variables back to  $s_{i}=-1/\log|z_{i}|$ gives
\eqref{MetResMod.390} with the constants matching  Corollary 4.2 of~\cite{MR2169586}.
\end{proof}

Thus the Ricci metric of the Weil--Petersson metric is of the form of a
`multi-cusp' metric, as exemplified by the product of Riemann surfaces with
cusps. It is shown in~\cite{MR2169586} that the K\"ahler-Einstein metric on
the moduli space is quasi-isometric to the Ricci metric; it presumably
has similar regularity although this has not been demonstrated. Since such
metrics also appear in the setting of locally symmetric spaces it is very
natural to enquire as to the structure of the continuous spectrum in these settings.

\section{Sectional curvature}

Written out in terms of the coordinate introduced by the real resolution as
in~\eqref{MetResMod.412}, the Weil--Petersson metric is given by the
Hermitian form
\begin{equation}
\begin{gathered}
g_{\WP}=\sum g_{j\bar l}\alpha _j\overline{\alpha _{l}},\\
g_{j\bar l}= \begin{cases}\pi s_j^3(1+s _j^2\gamma_{j\bar j})& j=l\le k\\
s_j^3s_l^3\gamma_{j\bar l}& j\not=l\le k\\
s_j^3\gamma _{j\bar l}&j\le k,\ l>k\\
\gamma _{j\bar l}& j,l>k
\end{cases}
\end{gathered}
\label{20.8.2016.1}\end{equation}
where the $\gamma _{j\bar l}$ are log-smooth, have $\theta_j$ derivatives
vanishing to infinite order at $s_j=0$ and for $j,l>k$, $g_{j\bar l}$
restricts to the corner $\cap_{j\le k}\{s_j=0\}$ to give the induced
Weil--Petersson metric. The structure of the metric comes directly from the
formula~\eqref{MetResMod.301} which shows that the co-metric with respect
to the dual basis is log-smooth, with invertible tangential part, except
for the diagonal components in the `normal' directions which are of the form
\begin{equation}
g^{j\bar j}=\pi^{-1}s_j^{-3}(1+s_j^2\delta _j),\ 1\le j\le k
\label{20.8.2016.3}\end{equation}
with $\delta _j$ log-smooth. The Fenchel--Nielsen coordinates are geodesic coordinates~\cite{MR0204641}, therefore restricting at $\{s_{j}=0\}$ the cross terms $g_{j\bar l}$, $j \neq l$, vanish there. Note that this is different from the plumbing coordinates used in~\eqref{MetResMod.412}. This implies that the K\"ahler potential of the metric near the intersection of $k$ divisors is given by a log-smooth function with $\theta_{j}$ derivatives vanishing to infinite order at $s_{j}=0$ and with expansion
\begin{equation}
u(s_{1},\dots, s_{k}, z_{k+1}, \dots, z_{3g-3+n})=\psi(z,\bar z)+\sum_{i=1}^{k}2\pi s_{i}+\sum_{i=1}^{k}s_{i}^{2}\phi_{i}(s,z,\bar z),
\label{MetResMod.403}\end{equation}
where $\phi_{i}=\sum_{j\neq i} O(s_{j}^{2})$. The Weil--Petersson metric, given by the K\"ahler form $g=\partial \dbar u$, is 
\begin{multline}
g=\begin{pmatrix}
\pi s_{i}^{3}+\frac32 s_{i}^{4}\phi_{i} + O(s_{i}^{5}) & O(s_{i}^{3}s_{j}^{3})  & s_{i}^{3}\phi_{i,\bar z}\\
O(s_{i}^{3}s_{j}^{3}) & \pi s_{j}^{3}+\frac32 s_{j}^{4}\phi_{j} + O(s_{j}^{5})
&s_{j}^{3}\phi_{j,\bar z} \\
s_{i}^{3}\phi_{i,z} & s_{j}^{3}\phi_{j, z } & \psi_{z\bar z}+ O(\sum_{i} s_{i}^{2}) 
\end{pmatrix}, \\
1\leq i \neq j \leq k
\label{MetResMod.404}\end{multline}
with the dual metric 
\begin{equation}
g^{-1}=\begin{pmatrix}
\pi^{-1}s_{i}^{-3}+O(s_{i}^{-2}) & O(1)& O(1)\\
O(1) & \pi^{-1}s_{j}^{-3}+O(s_{j}^{-2}) &O(1)\\
O(1) &O(1) & \psi_{z\bar z}^{-1}+\sum O(s_{i}^{2})
\end{pmatrix}.
\label{MetResMod.405}\end{equation}
  
 \begin{proposition}\label{MetResMod.408}
The leading order of the curvature tensors of the Weil--Petersson metric near the intersection of $k$ divisors $\cap_{j=1}^{k}\{s_{j}=0\}$ in the orthonormal basis $\{\frac{ds_{j}}{s_{j}^{\frac12}}+s_{j}^{\frac32}id\theta_{j}, \ dz_{l},\  1\leq j \leq k, \ l>k\}$ 
 are given by the matrices below with entries $(q,p) \in \{i,j,*\} \times \{i,j,*\}$:
 \begin{equation}
\begin{gathered}
\tilde R_{i\bar i q\bar p}
=O\begin{pmatrix}
s_{i}^{-1} & s_{i}^{\frac12} & s_{i}^{\frac12}\\
s_{i}^{\frac12} & s_{i}^{\frac12}s_{j}^{\frac12} & s_{i}^{\frac12}\\
s_{i}^{\frac12} & s_{i}^{\frac12} & s_{i}
\end{pmatrix}
\\
\tilde R_{i \bar j q \bar p}
=O\begin{pmatrix}
s_{i}^{\frac12} & s_{i}s_{j} & s_{i}\\
s_{i}s_{j} & s_{j}^{\frac12} & s_{j}\\
s_{i} & s_{j} & s_{i}^{\frac32}s_{j}^{\frac32}
\end{pmatrix},\
\tilde R_{i \bar * q \bar p}
=O\begin{pmatrix}
s_{i}^{\frac12} & s_{i} & s_{i}^{2}\\
s_{i} & s_{j}^{\frac12} & s_{i}^{\frac32}s_{j}^{\frac32}\\
s_{i}^{2} & s_{i}^{\frac32}s_{j}^{\frac32} & s_{i}^{\frac32}
\end{pmatrix}\\
\tilde R_{*\bar i q \bar p}
=O\begin{pmatrix}
s_{i}^{\frac12} & s_{i} & s_{i}^{2}\\
s_{i} & s_{j}^{\frac12} & s_{i}^{\frac32}s_{j}^{\frac32} \\
s_{i}^{2} & s_{i}^{\frac32}s_{j}^{\frac32} & s_{i}^{\frac32}
\end{pmatrix},\
\tilde R_{*\bar * q \bar p}
=O\begin{pmatrix}
s_{i} & s_{i}^{\frac32}s_{j}^{\frac32} & s_{i}^{\frac32} \\
s_{i}^{\frac32}s_{j}^{\frac32} & s_{j} & s_{j}^{\frac32}\\
s_{i}^{\frac32} & s_{j}^{\frac32} & 1
\end{pmatrix}
\\
(q,p) \in \{i,j,*\} \times \{i,j,*\},\ 
1\leq i,j \leq k, \ k+1\leq * \leq 3g-3+n,
\end{gathered}\label{MetResMod.407}\end{equation}
where more specifically the sectional curvature of the normal direction $\tilde R_{i\bar i i \bar i}$ is given by 
\begin{equation}
\tilde R_{i\bar i i \bar i}=-\frac{3\pi}{4}s_{i}^{-1}+O(s_{i}).
\label{MetResMod.409}\end{equation}
 \end{proposition}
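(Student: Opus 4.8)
The plan is to compute the Kähler curvature tensor directly from the explicit metric \eqref{20.8.2016.1} and its inverse \eqref{MetResMod.405}, using the standard formula
\begin{equation*}
R_{i\bar j k\bar l}=-e_i\bar e_j g_{k\bar l}+g^{p\bar q}(e_i g_{k\bar q})(\bar e_j g_{p\bar l})
\end{equation*}
expressed in the commuting holomorphic frame $e_i=z_i\pa_{z_i}$ for $i\le k$ and $e_l=\pa_{z_l}$ for $l>k$ dual to the singular coframe $\alpha_j$ of \eqref{MetResMod.412}. Since the $e_i$ are holomorphic and $[e_i,e_j]=[e_i,\bar e_j]=0$, no connection-bracket corrections arise and this coordinate-type formula is valid in the given frame.

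First I would record the action of $e_i$ on the coefficients. From $\alpha_i=ds_i/s_i^2+id\theta_i$ one has $e_i=\ha(s_i^2\pa_{s_i}-i\pa_{\theta_i})$, and since every coefficient $\gamma_{j\bar l}$ and every entry of $g^{-1}$ is log-smooth and $\theta$-invariant to infinite order, $e_i$ reduces to $\ha s_i^2\pa_{s_i}$ modulo terms vanishing to infinite order at $s_i=0$. Thus $e_i$ applied to a coefficient of size $s_i^a$ produces one of size $s_i^{a+1}$: each differentiation in a normal direction $i\le k$ gains one power of $s_i$, whereas a tangential differentiation $e_l$, $l>k$, preserves orders.

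Next I would substitute the block form of $g_{k\bar l}$ from \eqref{20.8.2016.1} and of $g^{p\bar q}$ from \eqref{MetResMod.405} and extract the leading order of each of the two terms for every index pattern $(i\bar j k\bar l)$. The only care is needed in the bilinear term $g^{p\bar q}(e_i g_{k\bar q})(\bar e_j g_{p\bar l})$: the diagonal inverse entries are of size $s^{-3}$ while the off-diagonal ones are $O(1)$, so for each pattern one must determine which $(p,q)$ dominates after accounting for the vanishing orders of the differentiated entries, and check that the off-diagonal inverse contributions never promote an entry above the order claimed in \eqref{MetResMod.407}. Finally the tensor is re-expressed in the orthonormal frame: since the orthonormal coframe is $\eta_i=s_i^{3/2}\alpha_i$, the dual frame is $f_i=s_i^{-3/2}e_i$, so each normal index $r\le k$ contributes a factor $s_r^{-3/2}$ (the tangential indices $l>k$ are rescaled only by bounded factors, which do not affect orders). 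Reading off the resulting powers of $s_i,s_j$ gives the matrices \eqref{MetResMod.407}.

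For the distinguished normal sectional curvature \eqref{MetResMod.409} I would carry the computation out exactly rather than merely to leading order. With $g_{i\bar i}=\pi s_i^3(1+O(s_i^2))$ and $g^{i\bar i}=\pi^{-1}s_i^{-3}(1+O(s_i^2))$ one finds $e_i g_{i\bar i}=\tfrac{3\pi}{2}s_i^4+O(s_i^6)$, hence $-e_i\bar e_i g_{i\bar i}=-3\pi s_i^5+\cdots$ while $g^{i\bar i}(e_i g_{i\bar i})(\bar e_i g_{i\bar i})=\tfrac{9\pi}{4}s_i^5+\cdots$, so that $R_{i\bar i i\bar i}=(-3+\tfrac94)\pi s_i^5=-\tfrac34\pi s_i^5$; dividing by $s_i^6$ (the product of the four normalization factors) yields $\tilde R_{i\bar i i\bar i}=-\tfrac{3\pi}{4}s_i^{-1}+O(s_i)$. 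The main obstacle is exactly this point: the leading normal curvature is not the naive leading order of either term but arises from the precise cancellation $-3+\tfrac94=-\tfrac34$ between the Hessian term and the $|\nabla g|^2$ term, so both must be evaluated to their exact constants. Beyond this, the difficulty is purely the bookkeeping of the many index patterns in \eqref{MetResMod.407}, together with the verification that the merely $O(1)$ off-diagonal entries of $g^{-1}$ never contribute at or below the stated orders.
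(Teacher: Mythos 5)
Your computation of the distinguished entry $\tilde R_{i\bar ii\bar i}$ is correct and is in fact the same computation as the paper's: the exact cancellation $-3\pi+\frac94\pi=-\frac34\pi$ between the Hessian term and the $g^{-1}|\pa g|^2$ term appears there as the $(1,1)$ entry of \eqref{MetResMod.400}, and your rescaling rule ($s_i^{-3/2}$ per normal index) is the paper's. But there is a genuine gap in the rest of the plan. The paper does \emph{not} compute from the block form \eqref{20.8.2016.1}: it computes from the K\"ahler potential \eqref{MetResMod.413}, $u=\phi+2\pi s+s^{2}\psi$, via \eqref{MetResMod.414}, and this difference is not cosmetic. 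In your scheme the Hessian term $-e_i\bar e_j g_{q\bar p}$ is estimated by letting $e_i$ act on the block entries of \eqref{20.8.2016.1}; for the entries in which the differentiated indices do not match the metric indices this cannot produce the claimed orders. Concretely, take $\tilde R_{i\bar i*\bar *}$ (claimed $O(s_i)$; the problem already occurs for $k=1$). Here $g_{*\bar *}=\gamma_{*\bar *}$ has \emph{no} vanishing in $s_i$, and for a log-smooth, $\theta$-flat coefficient one only has $\pa_{s_i}\gamma_{*\bar *}=O(\log s_i)$; hence $e_i\bar e_i g_{*\bar *}=O(s_i^3\log s_i)$, and after the normalization $s_i^{-3}$ this is $O(\log s_i)$ — unbounded — while the bilinear term (normalized $O(s_i\log^2 s_i)$) cannot cancel it. Likewise for $\tilde R_{i\bar ij\bar j}$ your bookkeeping yields at best $O(s_j^{2}\log s_i)$, with no decay in $s_i$ at all, versus the claimed $O(s_i^{1/2}s_j^{1/2})$.

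The missing idea is to use K\"ahlerity not merely as a license to write the curvature formula but as an identity among derivatives of $g$: since $g_{q\bar p}=\pa_q\pa_{\bar p}u$, every Hessian term is a mixed fourth derivative of $u$ and may be regrouped so that the differentiations fall on the coefficient with the most vanishing,
\begin{equation*}
e_i\bar e_i g_{*\bar *}=e_*\bar e_* g_{i\bar i}=O(s_i^{5}),
\qquad
e_i\bar e_i g_{j\bar j}=e_j\bar e_i g_{i\bar j}=O(s_i^{4}s_j^{4}),
\end{equation*}
because $g_{i\bar i}=\pi s_i^{3}(1+O(s_i^{2}))$ and $g_{i\bar j}=s_i^{3}s_j^{3}\gamma_{i\bar j}$ have explicit $s_i$-vanishing; after rescaling these give $O(s_i^{2})$ and $O(s_is_j)$, inside the claimed bounds of \eqref{MetResMod.407}. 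This regrouping is exactly what the paper's potential-based computation \eqref{MetResMod.414}--\eqref{MetResMod.400} performs automatically, and it is also where the structural input \eqref{MetResMod.403} (the geodesic property of Fenchel--Nielsen coordinates, which fixes the form of the potential) enters. Without this step, every off-diagonal matrix entry in \eqref{MetResMod.407}, and hence the proposition itself, remains unproved; with it, your outline becomes essentially the paper's argument.
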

 \begin{remark}
 The leading coefficient in~\eqref{MetResMod.409} matches with the  scalar curvature of the leading term in~\eqref{MetResMod.301} which gives
 $$
 R=-\frac{3\pi}{2}s_{i}^{-1}.
 $$
 \end{remark}

\begin{proof}

We show the computation of the case with one single divisor first $k=1$, and the computation with multiple divisors are similar. Consider the K\"ahler potential $u(s,z,\bar z)$ for the Weil--Petersson metric, which should be
\begin{equation}\label{MetResMod.413}
u=\phi(z, \bar z) + 2\pi s+s^{2}\psi(s, z,\bar z)
\end{equation}
the metric $g_{i\bar j}$ is given by (here $\pa_{\alpha}=\frac{1}{2}(s^{2}\pa_{s}-i\pa_{\theta}$) and similarly $\pa_{\bar \alpha}=\frac{1}{2}(s^{2}\pa_{s}+i\pa_{\theta})$)
\begin{equation}\label{MetResMod.414}
g_{i\bar j}=\pa_{i}\pa_{\bar j}u
=\begin{pmatrix}
u_{\alpha\bar \alpha}& u_{\alpha\bar z}\\
u_{\bar \alpha z}& u_{z \bar z}
\end{pmatrix}
=\begin{pmatrix}
\pi s^{3} + \frac32s^{4}\psi + \frac32 s^{5}\psi_{s}+\frac14 s^{6}\psi_{ss}& s^{3}\psi_{\bar z}+\frac12 s^{4}\psi_{s\bar z}\\
 s^{3}\psi_{z}+\frac12 s^{4}\psi_{sz} & \phi_{z\bar z}+s^{2}\psi_{z\bar z}
\end{pmatrix}.
\end{equation}
Note the
metric itself, is of the form 
\begin{equation}
\begin{pmatrix}
s^3(\pi+a's)& s^3b'\\
s^3\overline{b'}& h'
\end{pmatrix}
\label{MetResMod.397}\end{equation}
where $a',$ $b'$ and $h'$ are again log-smooth and $h'$ is invertible.

The dual metric (always using the b-basis which
becomes the cusp basis) is of the form 
\begin{equation}
\begin{gathered}
g^{-1}=(\det g_{i\bar j})^{-1}
\begin{pmatrix}
u_{z \bar z}& -u_{\alpha\bar z}\\
-u_{\bar \alpha z}& u_{\alpha\bar \alpha}
\end{pmatrix}\\
= \begin{pmatrix}
\pi^{-1}s^{-3}-\frac32 s^{-2}\pi^{-2}\psi & -\frac{1}{\pi}\phi_{z\bar z}^{-1}\psi_{\bar z}\\
 -\frac{1}{2\pi}\phi_{z\bar z}^{-1}\psi_{z} & \phi_{z\bar z}^{-1}
\end{pmatrix}
+
O \begin{pmatrix}
s^{-1} & s\\
s & s^{2}
\end{pmatrix}
\end{gathered}
\label{MetResMod.396}\end{equation}

From here we compute the curvature tensor:
\begin{equation}
\begin{gathered}
R_{1\bar 1 q\bar p}=-\begin{pmatrix}
\frac{3\pi}{4} s^{5}+O(s^{6}) & O(s^{5})\\
O(s^{5}) & \frac32s^{4}\psi_{z\bar z}+O(s^{5})
\end{pmatrix}
\\
R_{*\bar 1 q\bar p}=-\begin{pmatrix}
\frac34 \psi_{z} s^{5} + O(s^{6}) & O(s^{5})\\
O(s^{5})& O(s^{3})
\end{pmatrix}\\
R_{1\bar * q\bar p}=-\begin{pmatrix}
\frac34 \psi_{\bar z} s^{5}+O(s^{6})&O(s^{5})
\\
O(s^{5})&O(s^{3})
\end{pmatrix}\\
R_{*\bar * q\bar p}=-\begin{pmatrix}
\frac32 \psi_{z\bar z}s^{4} + O(s^{5}) &O(s^{3})\\
O(s^{3}) & O(1)
\end{pmatrix}
\end{gathered}
\label{MetResMod.400}\end{equation}

In the orthonormal basis, we need to rescale the basis by changing from $\frac{ds}{s^2}+id\theta$ to unit length vector
$\frac{ds}{s^{\frac12}}+s^{\frac32}id\theta,$ so effectively multiplying
  each entry with a `$i$' or `$\bar i$' by $s_{i}^{-\frac 32}$. Therefore,
\begin{equation}
\begin{gathered}
\tilde R_{1\bar 1 q\bar p}=
\begin{pmatrix}
-\frac{3\pi}{4}s^{-1}& 0\\
0&0
\end{pmatrix}+O\begin{pmatrix}
1& s^{\frac12}\\
s^{\frac12} & s
\end{pmatrix}\\
\tilde R_{*\bar 1 q\bar p}=O\begin{pmatrix}
s^{\frac12} & s^{2}\\
s^{2} & s^{\frac32}
 \end{pmatrix},\
\tilde R_{1\bar * q\bar p}=O\begin{pmatrix}
s^{\frac12} & s^{2}\\
s^{2}& s^{\frac32}
\end{pmatrix},\
\tilde R_{*\bar * q\bar p}=O\begin{pmatrix}
s & s^{\frac32}\\
s^{\frac32} & 1
 \end{pmatrix}.
\end{gathered}
\label{MetResMod.401}\end{equation}

\end{proof}

\section{Takhtajan-Zograf metrics}\label{TZ}

To analyse the behaviour of the Takhtajan-Zograf metric(s) at the divisors
of $\oMs{g,n},$ $n\ge1,$ we need (in addition to everything above) to see
what happens to the appropriate `Eisenstein series' under degeneration.

If we have a punctured Riemann surface undergoing nodal
degeneration with a fixed cusp $B_i$ we need to understand the behaviour of the
Eisenstein series, which can be realized as the solution to  
\begin{equation}
(\Lap+2)E_i(z)=0,\ E_i(z)=x^{-2}\chi+E'_i,\ E'_i\in L^2
\label{MetResMod.277}\end{equation}
where $\chi$ is a cut-off near $B_i.$ So the behaviour of this follows from
the analysis of $(\Lap+2)^{-1}$ above.

\begin{lemma}\label{MetResMod.289} On any connected compact Riemann surface
  with punctures, the Eisenstein series $E_i$ determined by
  \eqref{MetResMod.277} for the puncture $B_i$ is strictly positive.
\end{lemma}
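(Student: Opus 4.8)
The plan is to prove positivity by the maximum principle, exploiting two ingredients: the favourable sign of the zeroth-order term in $\Lap+2$ and the asymptotics of $E_i$ at the cusps coming from the indicial analysis of Lemma~\ref{MetResMod.429}. First I would observe that $E_i$ is real-valued: the forcing $x^{-2}\chi$ and the operator are real, and since $-2$ lies outside the spectrum of $\Lap$ the correction $E'_i$ is the unique $L^2$ solution of $(\Lap+2)E'_i=-(\Lap+2)(x^{-2}\chi)$, hence real. I then rewrite the equation as $\Lap E_i=-2E_i$, with $\Lap$ the non-negative Laplacian.

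Next I would record the behaviour at the ends. Near each cusp the rotationally invariant part of $\Lap+2$ is a regular-singular ordinary differential equation with indicial roots $1$ and $-2$. At the forcing cusp $B_i$ the prescribed singularity is the non-$L^2$ root, so $E_i\sim x^{-2}\to+\infty$; at every other cusp $\chi$ vanishes and the $L^2$ condition selects the decaying root, so $E_i\to0$, vanishing simply. In particular $E_i$ is bounded below and, for each $\epsilon>0$, the sublevel set $\{E_i\le-\epsilon\}$ avoids a neighbourhood of every cusp and is therefore contained in a compact core.

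I would then obtain $E_i\ge0$ by a second-derivative test. If $\inf E_i<0$ the infimum is attained at an interior point $p$, using the compactness just noted. There the Hessian of $E_i$ is positive semidefinite, so $\Lap E_i(p)\le0$; but the equation gives $\Lap E_i(p)=-2E_i(p)>0$, a contradiction. Hence $E_i\ge0$ throughout.

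Finally I would upgrade to strict positivity by the strong maximum principle. In the PDE sign convention the equation reads $(\Lap_{\mathrm{PDE}}-2)E_i=0$, whose zeroth-order coefficient $-2\le0$ is of the good sign, so $E_i$ is in particular a supersolution to which the strong minimum principle applies. If $E_i(p)=0$ at an interior point $p$, this nonpositive interior minimum forces $E_i$ to be constant, hence identically $0$, contradicting the $x^{-2}$ singularity at $B_i$. Thus $E_i>0$ away from the cusps. The main obstacle is that the surface is complete and noncompact, so the maximum principle is not available off the shelf; this is exactly what the compactness of the negative sublevel sets, furnished by the cusp asymptotics, overcomes. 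Once that is in hand the argument is the standard elliptic one, with only the bookkeeping between the non-negative and PDE sign conventions requiring care.
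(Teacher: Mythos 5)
Your proof is correct and takes exactly the paper's approach: the paper's entire proof of this lemma is the two words ``Maximum principle.'' Your write-up---real-valuedness of $E_i$, the cusp asymptotics (indicial roots $1$ and $-2$) giving compactness of the negative sublevel sets, the second-derivative test for $E_i\ge 0$, and the strong minimum principle upgrading to strict positivity on the connected surface---is precisely the standard fleshing-out of that remark.
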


\begin{proof} Maximum principle.
\end{proof}

For this Eisenstein series associated with any one marked point on a
marked Riemann surface the limit at another marked point 
\begin{equation}
L(i,p)=\lim \rho_{p}^{-1}E_i
\label{MetResMod.293}\end{equation}
is the value of some L-function.

Now consider the degeneracy of $E_i$ at some $k$-fold (self-)intersection
of divisors at the boundary of $\oMs{g,n}.$ The nodal Riemann surface
corresponds to at most $k$ components of connected Riemann surfaces $M_j$ of
genus $g_j.$ Each component has (possibly empty) finite set of $n_j+k_j$
points, consisting of $n_j$ labelled marked
points and $k_j$ distinct but unlabelled nodal points; each component is stable in the sense
that $2g_j+n_j+k_j>2$ and there is an involution pairing the nodal points
(collectively). At each boundary point, the b-cotangent bundle of
$\Ms{g,n}$ contains as a subspace the b-cotangent bundle of $\Ms{g_j,n_j+k_j}$.

Now, for such a nodal Riemann surface in the boundary of the moduli space,
the $i$th marked point appears in precisely one of the components
corresponding to $j=m(i).$ Each of the other components is connected to
this particular component by a chain of separating nodes. Let $s(m(i),j)$
be the number of these nodes and let $n(i,j)$ formally denote the last
node which is in the $j$th component Riemann surface. We also let
$L(m(i),j)$ be the product of the L-functions corresponding to the chain
of intervening `linking' nodes.

\begin{proposition}\label{MetResMod.290} Each Eisenstein series $E_i$ is
  log-smooth on $\hCs{g,n}$ (the metric resolution
  of the universal curve) and at a $k$-fold corner is of the form 
\begin{equation}
E_i=\sum\limits_{j}\rho_{i}^{4\sigma (m(i),j)}L(m(i),j)E_{n(i,j)}(\Ms{g_j,n_j+k_j}).
\label{MetResMod.291}\end{equation}
\end{proposition}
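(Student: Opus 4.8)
The plan is to read off both the regularity and the leading coefficient of $E_i$ from the analysis of $(\Lap+2)^{-1}$ developed above, organising the computation by the combinatorics of the dual graph of the degenerate fiber. Its vertices are the component surfaces $M_j$ and its edges are the nodes; the quantity $\sigma(m(i),j)$ is the graph distance from the vertex $M_{m(i)}$ carrying $B_i$ to $M_j$, and I would induct on this distance. Global log-smoothness comes first: writing $E_i=x^{-2}\chi+E_i'$ as in \eqref{MetResMod.277}, the function $h:=-(\Lap+2)(x^{-2}\chi)$ is log-smooth (indeed smooth) and supported away from $F_i$, so by the uniform resolvent bound of Proposition~\ref{MetResMod.129} and the expansion theorem Proposition~\ref{MetResMod.94} one gets $E_i'=-(\Lap+2)^{-1}h\in\CI_{\log}(\hCs{g,n})$ with an expansion of the form \eqref{MetResMod.96} at every boundary face. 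In particular $\rho_{\cF}^2E_i$ is log-smooth up to the boundary, which is the global statement; only the leading term at a $k$-fold corner remains.

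To begin the induction I would restrict $E_i$ to the most degenerate fiber $\hZ=\bigcup_jM_j\cup Z_{\II}$, where the limiting equation is $(\Lap+2)E_i=0$. On the component $M_{m(i)}$ containing $B_i$ this is precisely the defining problem \eqref{MetResMod.277} for that stable surface, and since $\Lap+2$ is invertible there (Lemma~\ref{MetResMod.429}) uniqueness forces $E_i|_{M_{m(i)}}$ to equal the boundary forcing term for $B_i$ on $M_{m(i)}$, the $\sigma=0$ term. By the cusp indicial roots $1$ and $-2$ of Lemma~\ref{MetResMod.429} this forcing term vanishes simply at each node of $M_{m(i)}$, with coefficient the L-function limit \eqref{MetResMod.293}, and it is strictly positive by Lemma~\ref{MetResMod.289}.

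The inductive step propagates this data across one neck, and is the heart of the matter. On the neck cylinder the governing model is the reduced operator $\Lap_{\II}+2$ of Lemma~\ref{MetResMod.430}, a regular-singular ODE with indicial roots $0$ and $3$; solving its Dirichlet problem and matching the resulting boundary behaviour to the cusp roots $1,-2$ on the adjacent components shows that a simple ($\rho^1$) value entering the neck forces, on the far component, the non-$L^2$ solution $E_*$ with a double pole at the entry node, with one further factor of the L-function limit \eqref{MetResMod.293} and an increase of the vanishing order by $4$ in the base defining function $\rho_i$ (the neck itself then vanishing to one order higher, $4\sigma+1$). This is exactly the single-node case recorded in the introduction. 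Iterating along a minimal chain of $\sigma(m(i),j)$ necks from $M_{m(i)}$ to $M_j$ accumulates the order $\rho_i^{4\sigma(m(i),j)}$, the product $L(m(i),j)$ of the intervening L-functions, and the leading coefficient $E_{n(i,j)}$ on $M_j$, giving the $j$-th term of \eqref{MetResMod.291}. When several minimal chains reach the same $M_j$ their contributions are summed; Lemma~\ref{MetResMod.289} ensures all terms are positive, so there is no cancellation of the leading coefficient.

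The main obstacle is the single-neck transfer: pinning down, uniformly as the neck degenerates, that the interaction of the cusp roots $(1,-2)$ with the neck roots $(0,3)$ yields exactly the fourth-order drop and reproduces $E_*$ with the correct residue and L-function weight. Once this computation is secured from Lemmas~\ref{MetResMod.429} and~\ref{MetResMod.430}, the rest — chaining necks and summing over minimal paths — is bookkeeping controlled by the log-smooth expansion of Proposition~\ref{MetResMod.94}.
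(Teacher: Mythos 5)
Your proposal matches the paper's approach: the paper gives no detailed proof of this proposition, only the preceding remark that the behaviour of $E_i$ "follows from the analysis of $(\Lap+2)^{-1}$ above" together with the iterative description in the introduction (the boundary forcing term on the component meeting the fixed divisor, a fourth-order drop with an $L$-function factor and coefficient $E_*$ across each neck, and summation over minimal paths with positivity preventing cancellation). Your decomposition $E_i=x^{-2}\chi+E_i'$, the use of Propositions~\ref{MetResMod.129} and~\ref{MetResMod.94} for global log-smoothness, and the induction over dual-graph distance using the model operators of Lemmas~\ref{MetResMod.429} and~\ref{MetResMod.430} for the single-neck transfer is precisely that argument, and you correctly isolate the same neck-crossing computation that the paper itself leaves implicit.
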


For one of the `fixed divisors' in $\oMs{g,n},$ $n>0,$ the corresponding
Takhtajan-Zograf metric is the push-forward in
\begin{equation}
(q_1,q_2)_{\TZ}=\phi_*(\frac{E_i^{-1}q_1\bar q_2}{\mu_H})
\label{MetResMod.278}\end{equation}
and the total metric is the sum over $i;$ see \cite{MR2443304}.

The extra factor is therefore at worst $O(x^{2})$ in terms of the
logarithmic coordinates, so does not affect the exponential decay from the
vanishing (i.e.\ simple pole) of the quadratic differentials at the cusp
face with which it is associated. In fact the final effect is that there is
one extra order of singularity at the nodal face relative to the Weil--Petersson
metric. The leading term can be extracted as before, except that there is
an overall constant which is global in nature and the Takhtajan-Zograf
metric will have the behaviour 
\begin{equation}
g_{\TZ}=cds^2+c's^4d\theta^2
\label{MetResMod.280}\end{equation}
in the normal direction to the divisor. According to Obitsu-To-Weng there
is some degeneracy in the tangential directions, see~\cite{MR2443304}.

\section{Lengths of short geodesics}\label{SG}

For the plumbing model the shortest geodesic occurs in the middle of the
hyperbolic neck, that is, at $|z(\theta)|=\sqrt{|t|}$ in terms of the
original complex coordinates. The length of this circle is $2\pi^{2}s,$ $s=\ilog|t|.$
This provides an approximation to the degenerating geodesic for the global
hyperbolic metric both in terms of the length and the position of the circle.

\begin{proposition} In terms of the local fiber coordinate
$w=\ilog|z|/s$ near the front face of the metric resolution the
short closed geodesic near a nodal point is of the form 
\begin{equation}
\gamma_s(\theta)=(w(s,\theta), \theta)=(2+g(s)+g'(s,\theta),\theta)
\label{MetResMod.311}\end{equation}
where $g(s)$ is log-smooth with $g(0)=0$ and $g'(s,\theta)$ is smooth and vanishes to
infinite order as $s\downarrow0;$ it follows that its length is
\begin{equation}
L_\gamma(s)=2\pi^{2}s(1+se(s))
\label{MetResMod.313}\end{equation}
where $e$ is log-smooth.
\end{proposition}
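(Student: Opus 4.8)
The plan is to reduce the whole question to the explicitly solvable plumbing (collar) metric by means of the conformal factor supplied by Theorem~\ref{MetResMod.285}, and then to locate the short geodesic as the critical parallel of a perturbed surface of revolution, controlling both its position and its length by the implicit function theorem in the log-smooth category.

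First I would invoke Theorem~\ref{MetResMod.285} to write the fiber hyperbolic metric near the neck in the conformally flat form $g_{\text{hyp}}=e^{2(\phi+f)}(dx^2+d\theta^2)$, where $x=\log|z|$, $\theta=\arg z$, the plumbing metric is $g_{\pl}=e^{2\phi}(dx^2+d\theta^2)$ with $\phi=\log(\pi/|\log|t||)-\log\sin(\pi x/\log|t|)$ as in \eqref{MetResMod.30}, and $f\in\CI_{\log}(\CH)$ is rotationally flat. Since the grafted metric already has curvature $-1$ to second order at the divisors (Proposition~\ref{MetResMod.383}), the forcing term in \eqref{MetResMod.388} vanishes to second order, so the formal solution of Proposition~\ref{MetResMod.130} gives $f=O(s^2)$ on the neck with its $\theta$-derivatives vanishing to infinite order. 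In particular $g_{\text{hyp}}$ carries no $dx\,d\theta$ cross term and, up to the infinite-order $\theta$-dependence of $f$, is a surface of revolution with $d\theta^2$-coefficient $\lambda=e^{2(\phi+f)}$.

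Next I would use that for such a metric a parallel $\{x=\text{const}\}$ is a closed geodesic exactly at a critical point of $\lambda$, i.e.\ where $\phi'(x)+\pa_x f=0$. In the fiber coordinate $w=\ilog|z|/s$ one has $\pi x/\log|t|=\pi/w$, so the unperturbed equation $\phi'=0$ reads $\cot(\pi/w)=0$ and has the simple root $w=2$ with nonzero derivative there; since $\pa_x f$ is log-smooth of size $O(s^2)$, the implicit function theorem produces a unique critical value $w_*(s)=2+g(s)$ with $g$ log-smooth and $g(0)=0$. To promote the critical parallel to the genuine closed geodesic I would realise the latter as a graph $\theta\mapsto(w(\theta),\theta)$ and solve the Euler--Lagrange equation of the length functional $L[w]=\int_0^{2\pi}\sqrt{A(w,\theta)(w')^2+B(w,\theta)}\,d\theta$ by the implicit function theorem on $2\pi$-periodic functions about the constant solution $w\equiv 2$; because every $\theta$-dependent term inherits the infinite-order flatness of $f$, the solution is $w(s,\theta)=2+g(s)+g'(s,\theta)$ with $g'$ vanishing to infinite order as $s\downarrow0$, which is \eqref{MetResMod.311}.

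Finally, for the length I would evaluate $L_\gamma(s)=2\pi\sqrt{B(w_*(s),s)}$ modulo the $O(s^\infty)$ correction coming from $g'$. Here $B(w_*)=\pi^2s^2\csc^2(\pi/w_*)\,e^{2f}$, and substituting $w_*=2+g(s)$ gives $\csc^2(\pi/w_*)=1+O(g^2)=1+O(s^4)$ and $e^{2f}=1+O(s^2)$, whence $L_\gamma(s)=2\pi^2 s\sqrt{1+O(s^2)}=2\pi^2 s(1+s\,e(s))$ with $e$ log-smooth, which is \eqref{MetResMod.313}. The main obstacle I anticipate is the rigorous passage from the surface-of-revolution heuristic to the genuine geodesic: one must verify that the short geodesic indeed lies in the collar region where $g_{\text{hyp}}=e^{2(\phi+f)}(dx^2+d\theta^2)$, realise it as a graph over $\theta$, and run the implicit function theorem in the log-smooth class while checking that the infinite-order $\theta$-flatness of $f$ is transmitted to $g'$ and that no logarithmic factors of higher than the expected degree are generated in $g$ and $e$.
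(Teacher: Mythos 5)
Your strategy coincides with the paper's own proof in its essentials: both start from the resolved collar form of the hyperbolic metric (the paper's \eqref{MetResMod.501}, which is your $e^{2(\phi+f)}(dx^2+d\theta^2)$ rewritten in the coordinates $(w,\theta)$), both locate the geodesic as a perturbation of the model circle $w=2$ by an implicit function theorem (IFT), and both read off the length afterwards. The organizational difference is that you split the problem into a finite-dimensional Clairaut step (the critical parallel $w_*(s)=2+g(s)$ of the rotationally invariant part, which cleanly yields the log-smoothness of $g$) followed by an infinite-dimensional correction, whereas the paper runs a single variational IFT on curves parametrized as $w=2+h(s)+su(\theta,s)$ with $\int u\,d\theta=0$ (see \eqref{MetResMod.309}) for the rescaled functional $L(\gamma)/s$ of \eqref{MetResMod.310}, and obtains the log-smoothness of the length from the push-forward theorem rather than by direct substitution.

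However, your second step has a genuine gap as stated. The implicit function theorem ``on $2\pi$-periodic functions about the constant solution $w\equiv 2$'' is not uniform as $s\downarrow 0$: the linearization of the Euler--Lagrange operator there (the Jacobi operator) has the form $-\frac{c_1}{s}\partial_\theta^2+c_2\,s+\dots$ with $c_1,c_2>0$, so it has an eigenvalue of order $s$ on the constant Fourier mode and eigenvalues of order $1/s$ on the mean-zero modes. For each fixed $s>0$ it is invertible, but the norm of its inverse blows up like $1/s$; hence a fixed-$s$ application of the IFT produces the geodesic without yielding any of the uniform conclusions you need --- the log-smoothness of $g$ and the $O(s^\infty)$ bound on $g'$ require control of the solution map down to $s=0$. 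The repair is a Lyapunov--Schmidt splitting that you almost have: treat the constant mode by your Clairaut step (after dividing the reduced equation by $s$ it is uniformly non-degenerate), and apply the infinite-dimensional IFT only on the mean-zero complement, perturbing about $w\equiv w_*(s)$ rather than $w\equiv 2$, where the linearization is bounded below by $c/s$ and the forcing is $O(s^\infty)$. This splitting-with-rescaling is exactly what the paper's parametrization $w=2+h(s)+su(\theta,s)$, together with the division of $L$ by $s$, accomplishes in one stroke: in those variables the Hessian of $L/s$ is non-degenerate uniformly down to and including $s=0$, which is what makes the single IFT legitimate there. You flagged this uniformity as an anticipated obstacle, but the mechanism that closes it is missing from your sketch.
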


\begin{proof} On the metric resolution, near a nodal point, the
  degenerating hyperbolic metric, takes the form 
\begin{equation}
    g=e^{2s^2f}Z(w)^2\left(\frac{dw^{2}}{w^2}+w^2s^2d\theta^{2}\right),\
Z(w)=\frac{\pi/w}{\sin(\pi/w)}
\label{MetResMod.501}\end{equation}
where $f=f(w,s,\theta))$ is log-smooth and $\pa_\theta f=O(s^{\infty}).$ 

To show that the actual degenerating geodesic is close to the curve for the
model, $\gamma(\theta)=(2,\theta),$ consider the length of families of curves of the form 
\begin{equation}
\gamma(\theta,s)=(w(s,\theta),\theta)=(2+h(s)+su(\theta,s),\theta),
\label{MetResMod.309}\end{equation}
where $h\in H^1([0,1))$ and $u\in H^1_0(\bbS\times[0,1))$ lies in the Hilbert subspace
without constant term, so $\int ud\theta=0.$ Then the length of the family satisfies
\begin{equation}
\frac{L(\gamma)}{s}=
\int
e^{s^2f(\gamma)}Z(w)E^{\ha}d\theta,\ E=\frac{(u')^2}{w^{2}}+w^2.
\label{MetResMod.310}\end{equation}
This is a $C^2$ function near zero in $[0,1)_s\times H^1([0,1))\times
H^1_0(\bbS\times[0,1))$ which for small $s$ has a non-degenerate minimum at
$(h,u)=0.$ Here the smoothness uses the fact $w>0$ and $E>0,$ so its inverse and 
square-root are strictly positive functions in $L^{\infty}.$  
Indeed, the derivative with respect to $(h,u)$ evaluated on the tangent vector
$(\kappa,\upsilon)$ may be written in the form
\begin{equation}
\begin{gathered}
\int \kappa \alpha +\upsilon'\beta \\
\alpha =e^{s^2f}Z(w)E^{\ha}
\left(s^2f_{w}-\frac{1}{w}(1-\frac{\pi}{w}\cot \frac{\pi}{w})
+E^{-1}(\frac{(u')^{2}}{w^{3}}+w) \right)\\
\beta =e^{s^2f}Z(w)E^{-1/2}\frac{u'}{w^{2}}-s\int\alpha 
\end{gathered}
\label{MetResMod.313a}\end{equation}
where the last integral is the unique in $\theta$ without constant term. So
using $L^2$ duality this may be identified as a map, rather than a linear form,
\begin{multline}
D_{s}:  H^1([0,1)\times L^2_0(\bbS;H^1([0,1))\ni(h,u)\longmapsto \\
(\alpha, \beta)\in H^1([0,1))\times L^2_0(\bbS;H^1([0,1))
\end{multline}
where the $0$ subscript indicates the absence of the constant mode.
As such it is again $C^2$ and its derivative at $s=0$ is invertible. From the
Implicit Function Theorem, $D_s$ has a unique $0$ near $0$ and from this the
stated regularity of the curve giving the unique geodesic follows
directly. The log-smoothness of the length can then be seen by evaluating
the integral as a push-forward.
\end{proof}

\appendix\section*{Appendix: Log-smooth functions}

In this appendix we recall the definition, and some of the basic
properties, of log-smooth conormal functions on a manifold with corners.

For a general compact manifold with corners the space of log-smooth
functions, denoted by $\CI_{\log}(M)$, is well defined in terms of iterated
expansions at each of the boundary faces. Proceeding by induction one can
suppose that $\CI_{\log}(M)$ is well-defined for any manifold with faces of
codimension at most $k.$ Then on a manifold $N$ with boundary faces of
codimension up to $k+1$ a function $u\in\CI(N\setminus\pa N)$ is in
$\CI_{\log}(N)$ if it has expansions at each boundary hypersurface
$H=\{\rho =0\}$ with defining function $\rho$ and for any (one) choice of
collar decomposition $\{\rho <\epsilon \}=H\times[0,\epsilon):$
\begin{equation}
u\simeq\sum\limits_{l=0}^{\infty}
\sum\limits_{j=0}^lu_{l,j}(\log\rho)^j\rho^l,\ u_{l,j}\in\CI_{\log}(H) 
\label{MetResMod.198}\end{equation}
where $H$ can have boundary faces only up to codimension $k.$ The precise
meaning of the expansion can be given in terms of conormal
estimates. Namely if $\cA(N)$ is the space of bounded conormal functions,
defined by the stability condition
\begin{equation}
\Diffb m(N)\cdot u\subset L^{\infty}(N)\ \forall\ m
\label{MetResMod.99}\end{equation}
then the remainder terms in \eqref{MetResMod.198} are required to satisfy 
\begin{equation}
\phi(\rho)\left[u- \sum\limits_{l=0}^{M}
\sum\limits_{j=0}^lu_{l,j}(\log\rho)^j\rho^l\right]\in \rho
^{M+1}\cA(N)\ \forall\ M
\label{MetResMod.100}\end{equation}
where $\phi\in\CIc(\bbR)$ has support in the collar and $\phi=1$ near $0.$

Such an expansion at a boundary face implies similar expansions near the
corners contained in it. Indeed, again proceeding by induction over boundary
codimension, the expansion of the coefficients in \eqref{MetResMod.198}
gives an expansion at any boundary face $F$ of $H$ of the form
\begin{equation}
u\simeq\sum\limits_{\alpha,\beta \le\alpha }^{\infty}u_{\alpha
  ,\beta}(\log\rho)^\beta \rho^\alpha ,\ u_{\alpha ,\beta }\in\CI_{\log}(F)
\label{MetResMod.101}\end{equation}
where $\rho$ now stands for the $\codim(F)$ defining functions of $F,$
one of which is by assumption a defining function for $N.$ Again the
meaning of this expansion is that the difference of $u$ and the terms with
$|\alpha |\le L$ should lie in $R^{L-1}_F\cA(M)$ where $R_{F}$ is a radial
defining function for $F.$ The function $u$ determines all the coefficients
in the expansion at any boundary face and it follows that there are
compatibility conditions across the higher codimension faces.

These compatibility conditions are between the expansions at different
boundary faces but the expansion at any one boundary face is
unrestricted. This can be seen by constructing appropriate elements of
$\CI_{\log}(M).$ The series in \eqref{MetResMod.101} can be summed by
choosing a cutoff $\chi\in\CIc(\bbR^p),$ where $p$ is the codimension and
$\chi=1$ in a neighbourhood of the origin. Then, provided
$\epsilon_l\downarrow0$ converges sufficiently rapidly, depending on the
coefficients $u_{\alpha ,\beta }\in\CI_{\log}(F),$
\begin{equation}
u=\sum\limits_{l}\sum\limits_{|\alpha|=l,\beta \le\alpha }^{\infty}u_{\alpha
  ,\beta}(\log\rho)^\beta \rho^\alpha\chi(\frac{\rho}{\epsilon_{l}} )\in\CI_{\log}(M)
\label{MetResMod.102}\end{equation}
satisfies \eqref{MetResMod.101}. Moreover, if $u'\in \CI_{\log}(M)$ has the
same expansion at $F$ then the difference can be decomposed near $F$ as a sum over
the boundary hypersurfaces containing $F$
\begin{equation}
u'-u=u''+\sum\limits_{H\supset F}v_H,\ v_H\in\CI_{\log}(M),\ \supp(u'')\cap
F=\emptyset
\label{MetResMod.103}\end{equation}
where each $v_H$ has a trivial expansion at all faces of codimension two or
higher which are not contained in $H.$ 

Since we construct functions below by iteration over such asymptotic sums
it is useful to consider subspaces of $\CI_{\log}(M)$ for which the
expansions at a given collection of boundary faces are trivial. We use the
notation $u\overset F\equiv0$ to indicate that the expansion at the boundary
face $F$ is trivial.

\begin{lemma}\label{MetResMod.104} If $u\in\CI_{\log}(M)$ and $u\overset
  F\equiv0$ at all boundary faces of codimension $k$ then $u$ may be
  decomposed into a sum over boundary faces \{G\} of codimension $k-1$ 
\begin{equation*}
u=\sum\limits_{G}u_G,\ u_G\in\CI_{\log}(M),\ u_G\overset F\equiv0.
\label{MetResMod.105}\end{equation*}
\end{lemma}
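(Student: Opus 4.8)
The plan is to reduce the statement to a local one near the codimension-$k$ corners and there to separate the codimension-$(k-1)$ faces by an angular partition of unity, using crucially that the hypothesis forces $u$ to vanish to infinite order on the whole codimension-$\ge k$ skeleton. Indeed, by the compatibility of iterated expansions recorded in \eqref{MetResMod.101}, $u\overset F\equiv0$ at every codimension-$k$ face implies $u\overset F\equiv0$ at every boundary face of codimension at least $k$: if $F'$ is a boundary face of a codimension-$k$ face $F$ then $R_F\le R_{F'}$ near $F'$, so $u\in R_F^N\cA(M)$ for all $N$ gives $u\in R_{F'}^N\cA(M)$ for all $N$ and, with the corresponding statement for conormal derivatives, triviality of the full expansion at $F'$. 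First I would dispose of the part of $u$ supported away from the codimension-$(k-1)$ skeleton: choosing $\theta_0\in\CI(M)$ supported off all codimension-$(k-1)$ faces, the function $\theta_0u$ has trivial expansion at every codimension-$(k-1)$ face and so may be added to any single chosen $u_G$ without altering its expansions at such faces; it therefore suffices to decompose $(1-\theta_0)u$, supported near the codimension-$(k-1)$ skeleton.

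The only genuine work is near a codimension-$k$ corner $F$, where exactly the $k$ faces $G_1,\dots,G_k$ of codimension $k-1$ containing $F$ meet; away from such corners a neighbourhood of the interior of a single $G$ meets no other codimension-$(k-1)$ face and the whole of $u$ is assigned to $u_G$. Writing $F=\{x_1=\dots=x_k=0\}$ in admissible coordinates with $G_i=\{x_j=0,\ j\ne i\}$, I would blow up $F$; in polar variables $x=R\omega$, $R=|x|$, the lifts of the $G_i$ meet the front face at the disjoint vertices $\omega=e_i$. I then choose smooth $\chi_i(\omega)$ on the front face with $\sum_i\chi_i=1$, each identically $1$ near $e_i$ and identically $0$ near $e_j$ for $j\ne i$. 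Pulled back, $\chi_i(x/R)$ is homogeneous of degree zero, hence bounded and conormal on $M$ (any $\Diffb m$ preserves homogeneity and lands in $L^\infty$), smooth away from $F$ and its subfaces, and identically zero near every $G_j$ with $j\ne i$. Setting the local piece to be $\chi_i(x/R)\,u$, the product lies in $R_F^N\cA(M)$ for all $N$ since $u$ does and $\cA(M)\cdot R_F^N\cA(M)\subset R_F^N\cA(M)$, so triviality at $F$ and at all deeper faces is preserved; away from $F$ it is a product of log-smooth functions, hence in $\CI_{\log}(M)$. Thus each local piece is log-smooth, has trivial expansion at $G_j$ for $j\ne i$, and inherits $u$'s triviality on the codimension-$\ge k$ skeleton.

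Finally I would patch these local choices into global bounded conormal functions $\chi_G$ with $\sum_G\chi_G\equiv1$ on a neighbourhood of the codimension-$(k-1)$ skeleton and $\chi_G\equiv0$ near every other codimension-$(k-1)$ face: the ordinary smooth cutoff near the interior of $G$ is glued, at each corner $F$ in the boundary of $G$, to the angular cutoff $\chi_i(x/R)$ indexed by $G$. At corners of codimension greater than $k$, where several such angular cutoffs compete, they need only agree up to a conormal (not smooth) error, which is harmless because $u$ again vanishes to infinite order there. Setting $u_G=\chi_G(1-\theta_0)u$, with the neutral term $\theta_0u$ absorbed into one chosen piece, yields $\sum_Gu_G=u$ with each $u_G\in\CI_{\log}(M)$, trivial on the codimension-$k$ skeleton and trivial at every codimension-$(k-1)$ face except $G$, exactly as required; this is the globalized form of the polar-coordinate decomposition already used at \eqref{MetResMod.103}.

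I expect the main obstacle to be the construction of this global conormal partition $\{\chi_G\}$ with the prescribed infinite-order vanishing near the other codimension-$(k-1)$ faces, together with the verification that multiplication by the degree-zero angular cutoffs—singular precisely on the codimension-$\ge k$ skeleton—keeps each piece in $\CI_{\log}(M)$. Both points rest on the single fact that $u$ vanishes to infinite order on that skeleton, so that the non-smooth conormal behaviour of the cutoffs there is absorbed and only their smoothness away from the skeleton is used; modulo this, the gluing and the membership in $\CI_{\log}(M)$ are routine applications of the asymptotic-summation and conormality estimates in \eqref{MetResMod.100}--\eqref{MetResMod.102}.
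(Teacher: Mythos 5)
Your proposal is correct and is essentially the paper's own argument: the paper blows up all codimension-$k$ boundary faces, uses the fact that $\{u\in\CI_{\log}(M):u\overset F\equiv0\}$ lifts isomorphically to the log-smooth functions on the blown-up space with trivial expansion at the front face, and then splits the lift of $u$ by a partition of unity there, which is exactly your homogeneous degree-zero angular cutoffs $\chi_i(x/R)$ viewed downstairs, with their conormal singularity at the corners absorbed by the infinite-order vanishing of $u$. The only organizational difference is that you work corner-by-corner with explicit conormal cutoffs and then patch (handling deeper corners by the same absorption argument), whereas the paper performs one global blow-up after which the lifted codimension-$(k-1)$ faces are disjoint and the partition of unity is smooth upstairs.
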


\begin{proof} If $\ff$ is a boundary face of $M$ under the blow up of $F$
 the space $\{u\in\CI_{\log}(M);u\overset F\equiv0\}$ lifts isomorphically
 to $\{v\in\CI_{\log}([M;F]);v\overset{\ff}\equiv0\}.$ After the blow up
 of all boundary faces of codimension $k,$ the lifts of the faces of
 codimension $k-1$ are disjoint. Thus, on the blown up space the lift of
 $u$ can be divided into pieces each of which has support disjoint from one
 of the (lifted) boundary faces of codimension $k-1$ by use of a partition
 of unity. These pieces therefore are the lifts of a decomposition as
 desired. 
\end{proof}

\bibliography{MetResMod}
\bibliographystyle{amsplain}

\end{document}